\providecommand{\sgn}{\mathop{\rm sgn}\nolimits}
\newtheorem{Lemma}      {Lemma}[section]
\newtheorem{Theorem}    [Lemma]{Theorem}
\newtheorem{Corollary}  [Lemma]{Corollary}
\newtheorem{Proposition}[Lemma]{Proposition}
\begin{document}

\title[Carter-Payne homomorphisms and branching rules for endomorphisms]
{Carter-Payne homomorphisms and branching rules for endomorphism rings of Specht modules}

\author{Harald Ellers}
\address{Department of Mathematics\\
   Allegheny College\
   Meadville, PA 16335\\
   USA}
\email{hellers@allegheny.edu}
\author{John Murray}
\address{Department of Mathematics\\
   National University of Ireland Maynooth\\
   Co.~Kildare\\
   Ireland}
\email{John.Murray@maths.nuim.ie}
\subjclass{20C20, 20C30}
\date{January 30, 2009}
\keywords{one-box-shift homomorphism, Jucys-Murphy elements}

\begin{abstract}
Let n be a positive integer and let p be a prime. Suppose that we take a partition of n, and obtain another partition by moving a node from one row to a shorther row. Carter and Payne showed that if the p-residue of the removed and added positions is the same, then there is a non-zero homomorphism between the corresponding Specht modules for the symmetric group of degree n, defined over a field of characteristic p. In this paper we give a very simple description of such a homomorphism, as a map between polytabloids, using the action of a Murphy-Jucys element. We also present a proof that in this context the homomorphism space is 1-dimensional (this is a special case of the more general result for Iwahori-Hecke algebras proved by S. Lyle). Our methods give a lower bound on where the image of the Carter-Payne homomorphism lies in the Jantzen filtration of the codomain Specht module.

Let $\Sigma_n$ be the symmetric group of degree $n$, and let $F$ be a field of characteristic $p\ne 2$. Suppose that $\lambda$ is a partition of\/ $n+1$, that $\alpha$ and $\beta$ are partitions of\/ $n$ that can be obtained by removing a node of the same residue from $\lambda$, and that $\alpha$ dominates $\beta$. Let $S^\alpha$ and $S^\beta$ be the Specht modules, defined over $F$, corresponding to $\alpha$, respectively $\beta$. We give a very simple description of a non-zero homomorphism $\theta:S^\alpha\rightarrow S^\beta$ and present a combinatorial proof of the fact that dim$\,{\rm Hom}_{F\Sigma_n}(S^\alpha,S^\beta) = 1$. As an application, we describe completely the structure of the ring ${\rm End}_{F\Sigma_n}({S^\lambda}\,{\downarrow_{\Sigma_n}})$. Our methods furnish a lower bound for the Jantzen submodule of $S^\beta$ that contains the image of $\theta$.
\end{abstract}

\maketitle

\section{Introduction}

Let $n$ be a positive integer and let $\Sigma_n$ denote the symmetric group of degree $n$. For any ring $R$ and any partition $\alpha$ of\/ $n$, the Specht module $S_{R}^{\alpha}$ is defined to be the submodule of the permutation module $R_{\Sigma_{\alpha}}{\uparrow^{\Sigma_n}}$ spanned by certain elements called polytabloids, where $\Sigma_{\alpha}$ is the Young subgroup associated to $\alpha$ and $R_{\Sigma_{\alpha}}$ is the trivial $R\Sigma_{\alpha}$-module. (See \cite{James} for definitions.) Specht modules play a central role in the representation theory of the symmetric group. Now suppose that $F$ is a field. If $F$ has characteristic $0$, the Specht modules defined over $F$ are the simple $F\Sigma_n$-modules; if $F$ has characteristic $p$ the heads of the Specht modules $S_{F}^{\alpha}$, with $\alpha$ $p$-regular, are the simple $F\Sigma_n$-modules.

It would be useful to understand ${\rm Hom}_{F\Sigma_n}(S^{\alpha},S^{\beta})$  for all partitions $\alpha$ and $\beta$ of $n$ when $F$ has positive characteristic, but this problem has not been solved in general. In \cite{CarterPayne}, Carter and Payne exhibited some types of partitions $\alpha$ and $\beta$ for which ${\rm Hom}_{\Sigma_n}(S^{\alpha},S^{\beta}) \neq 0$ even though $\alpha \neq \beta$, generalizing earlier results of Carter and Lusztig \cite{CarterLusztig}.

Among the examples discovered by Carter and Payne are {\em one-box shift} homomorphisms. Suppose that the Young diagram $[\beta]$ of $\beta$ can be obtained by moving a single node in the diagram $[\alpha]$; then we say that $\beta$ is {\em one-box-shift} of $\alpha$ (see \eqref{E:beta}). This is equivalent to saying that there exists a partition $\lambda$ of $n+1$ such that $\alpha$ and $\beta$ can each be obtained by removing a node from the Young diagram of $[\lambda]$.  Carter and Payne showed that if the node in $[\alpha]\backslash[\beta]$ has the same $p$-residue as the node in $[\beta]\backslash[\alpha]$, and $\alpha$ dominates $\beta$, then  ${\rm Hom}_{\Sigma_n}(S^{\alpha},S^{\beta}) \neq 0$ (the {\em residue} of the node in the $i$-th row and $j$-th column of a Young diagram is the integer $j-i$; the {\em $p$-residue} is just $j-i$ taken modulo $p$).

In fact, in this case ${\rm dim}\,{\rm Hom}_{F\Sigma_n}(S^\alpha,S^\beta) = 1$, when the characteristic of $F$ is greater than $2$. This was proved in the context of Iwahori-Hecke algebras  by Lyle \cite{Lyle}.  For the group algebras, this result seems to have been known to experts in the area for some time. For example, A. Kleshchev claims, apparently incorrectly, in \cite[p498]{Kleshchev}, that this is a special case of the main result in \cite{CarterLusztig}. However, we do not believe that a complete proof of this theorem had been published before Lyle's paper.

Let $L_{n+1}$ be the Jucys-Murphy element consisting of the sum in $F\Sigma_{n+1}$ of all transpositions in $\Sigma_{n+1}$ that are not in $\Sigma_n$. We would like to point out that results from \cite{EllersMurray} make it possible to give the following simple description of one-box shift homomorphisms:

\begin{Theorem}\label{T:CarterPayne}
Let $F$ be a field of characteristic $p>0$ and let $\lambda$ be a partition of $n+1$. Suppose that $\alpha$ and $\beta$ can each be obtained by removing a node from $\lambda$, with $\alpha\unrhd\beta$, and that the removed nodes have the same $p$-residues. Identify $S_F^{\alpha}$ and $S_F^{\beta}$ as subquotients of $S_F^{\lambda}\downarrow_{\Sigma_n}$. Then multiplication by an appropriate polynomial in $L_{n+1}$ induces a non-zero homomorphisms from $S_F^{\alpha}$ to $S_F^{\beta}$ 
\begin{proof}
Assume that $\lambda$ has $t$ removable nodes, with $p$-residues $r_1,\ldots,r_t$. Let $\lambda_i$ be the partition of $n$ obtained by removing the $i$-th removable node from $[\lambda]$. Following James \cite[9.3]{James}, let $S^\lambda_i$ be the subspace of $S_F^\lambda$ spanned by those polytabloids $e_t$ where $t$ is a standard $\lambda$-tableau such that the symbol $n+1$ is in one of the first $i$ removable nodes of $t$. Then $S^\lambda_i$ is a $\Sigma_n$-submodule of $S^\lambda$, and $S^\lambda_i/S^\lambda_{i-1}\cong S^{\lambda_i}$. The $\Sigma_n$-series
$$
0\subset S^\lambda_0\subset S^\lambda_2\subset\ldots\subset S^\lambda_t=S^\lambda
$$
is called the Specht series of $S_F^\lambda{\downarrow_{\Sigma_n}}$.

Suppose that $\alpha=\lambda_u$ and that $\beta=\lambda_v$. Then $u<v$.  In \cite{EllersMurray}, we showed that $S_u^\lambda\prod_{i=u}^{v-1}(L_{n+1}-r_i)\subseteq S_v^\lambda$ and that this map induces a well-defined nonzero homomorphism $S_F^{\alpha} = S_u^\lambda/S_{u-1}^\lambda\rightarrow S_v^\lambda/S_{v-1}^\lambda = S_F^{\beta}$.

If $p>2$, then ${\rm Hom}_{\Sigma_n}(S_F^\alpha,S_F^\beta)$ is $1$-dimensional, and this map must be a nonzero multiple of the homomorphism defined in \cite{CarterPayne}.
\end{proof}
\end{Theorem}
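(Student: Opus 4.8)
The plan is to realize $S_F^\alpha$ and $S_F^\beta$ as sections of $S_F^\lambda{\downarrow_{\Sigma_n}}$ and to pass between them by acting with the Jucys--Murphy element $L_{n+1}$; this element is the natural instrument here because it commutes with $F\Sigma_n$, and so acts on $S_F^\lambda{\downarrow_{\Sigma_n}}$ through an $F\Sigma_n$-endomorphism. First I would recall James's branching series \cite[9.3]{James}: $0 = S^\lambda_0\subset S^\lambda_1\subset\cdots\subset S^\lambda_t = S_F^\lambda{\downarrow_{\Sigma_n}}$ with $S^\lambda_i/S^\lambda_{i-1}\cong S^{\lambda_i}$, where $S^\lambda_i$ is spanned by the polytabloids $e_t$ in which $n+1$ sits in one of the first $i$ removable nodes of $t$. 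The structural input I need is the behaviour of $L_{n+1}$ with respect to this series: $L_{n+1}$ is compatible with the filtration and induces on each section $S^{\lambda_i}$ multiplication by the $p$-residue $r_i$, so that $L_{n+1}-r_i$ lowers the filtration by one step at the $i$-th level. The mechanism is that on expanding $e_tL_{n+1}=\sum_{k\le n}e_t(k,n+1)$ in the standard basis and straightening, the part that leaves $n+1$ where it is produces the content of that cell, while the remaining terms move $n+1$ to another removable node; reducing modulo $p$ turns the content into the residue. This is the analysis carried out in \cite{EllersMurray}, which I would cite.

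With this in hand, write $\alpha=\lambda_u$, $\beta=\lambda_v$. The dominance hypothesis together with James's ordering fixes the relative position of $u$ and $v$ (in the statement, $u<v$), and the equal-residue hypothesis gives $r_u=r_v$. I would then form the endomorphism $f(L_{n+1})$ for the polynomial $f(X)=\prod_{i=u}^{v-1}(X-r_i)$, whose factors all commute. A short bookkeeping argument using the step-by-step lowering shows that $f(L_{n+1})$ carries $S^\lambda_u$ into $S^\lambda_v$ and $S^\lambda_{u-1}$ into $S^\lambda_{v-1}$, the coincidence $r_u=r_v$ being precisely what supplies the last lowering needed to make the induced map on the sections well defined. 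This produces an $F\Sigma_n$-homomorphism $S_F^\alpha=S^\lambda_u/S^\lambda_{u-1}\to S^\lambda_v/S^\lambda_{v-1}=S_F^\beta$ realized by multiplication by a polynomial in $L_{n+1}$.

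The substantive point, and the step I expect to be the main obstacle, is to verify that this induced homomorphism is \emph{not} zero: a priori $f(L_{n+1})$ could act as $0$ on the section in question. Here I would test $f(L_{n+1})$ on a conveniently chosen polytabloid $e_t$ generating the source section, straighten the result, and isolate the coefficient of one standard polytabloid in the target section; the claim to prove is that this coefficient is an explicit product of differences of residues (and comparable small integers) that is a unit in $F$, the hypotheses $\alpha\unrhd\beta$ and the equality of the two residues being what keep it from vanishing modulo $p$. This non-vanishing is exactly what is established in \cite{EllersMurray}, so in the write-up I would quote it rather than redo it. Finally, for $p>2$ one has $\dim_F\mathrm{Hom}_{F\Sigma_n}(S_F^\alpha,S_F^\beta)=1$ --- proved combinatorially in the body of this paper, and also a consequence of Lyle's theorem \cite{Lyle} --- so the homomorphism just built spans the Hom-space and is therefore a nonzero scalar multiple of the Carter--Payne homomorphism of \cite{CarterPayne}.
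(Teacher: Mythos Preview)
Your proposal is correct and follows essentially the same route as the paper: invoke James's Specht filtration of $S^\lambda{\downarrow_{\Sigma_n}}$, build the polynomial $\prod(L_{n+1}-r_i)$ from the residue action of $L_{n+1}$ on the sections, cite \cite{EllersMurray} for the filtration-compatibility and the nonvanishing of the induced map, and then invoke the one-dimensionality of the Hom space for $p>2$ to identify the result with the Carter--Payne map. Your extra commentary on the mechanism (the role of $r_u=r_v$, and how one would test nonvanishing by evaluating on a chosen polytabloid) goes a little beyond what the paper spells out, but the paper simply defers those same points to the cited reference.
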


In order to effectively use this result, we need some information about the action of $L_{n+1}$ on $S^\lambda$. The following is a variant of (3.3) in \cite{Murphy}:

\begin{Lemma}\label{L:MurphyMult}
Let $\lambda$ be a partition of $n+1$ and let $t$ be a $\lambda$-tableau (not necessarily standard). Suppose that $n+1$ occupies a column of length $r$ in $t$. Then $\lambda$ has one removable node $(r,c)$ for some $c\geq1$. Let $R$ be the set of symbols in $t$ whose columns have length strictly smaller that $r$. Then in $S^\lambda_{\mathbb Z}$ we have
$$
e_tL_{n+1}=(c-r)e_t+\sum_{i\in R}e_t(n+1,i).
$$
\end{Lemma}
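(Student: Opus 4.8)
The statement to prove is Lemma~\ref{L:MurphyMult}, describing the action of the Jucys--Murphy element $L_{n+1}$ on a polytabloid $e_t$ over $\mathbb{Z}$. I would work entirely in the permutation module $M^\lambda$ of tabloids, where $e_t = \{t\}\,\kappa_t$ with $\kappa_t = \sum_{\sigma\in C_t}\sgn(\sigma)\,\sigma$ the signed column-stabilizer sum; the point is that $\kappa_t$ annihilates any tabloid having two entries in the same column of $t$, and $L_{n+1}$ is a sum of transpositions, so only those transpositions $(n+1,i)$ that do not collide with the column of $n+1$ survive after multiplying by $\kappa_t$.

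First I would partition the transpositions $(n+1,i)$ occurring in $L_{n+1}$ (i.e.\ $i$ ranging over the first $n$ symbols) according to which column of $t$ the symbol $i$ lies in, relative to the column of $n+1$. Write $e_tL_{n+1} = \sum_i e_t(n+1,i)$, and observe $e_t(n+1,i) = \{t\}\,(n+1,i)\,\kappa_t^{(n+1,i)}$ up to an appropriate rewriting; more cleanly, $(n+1,i)$ acts on tabloids, and I would track the image tabloid $\{t\cdot(n+1,i)\}$ after applying $\kappa_t$. Second, I would split the index set: (a) symbols $i$ in the \emph{same column} as $n+1$ in $t$ — here $t\cdot(n+1,i)$ has both $i$ and $n+1$ still in that column, but in swapped rows, so there is a column transposition $\tau\in C_t$ with $\{t\cdot(n+1,i)\}\tau$ related to $\{t\}$ with sign $-1$; summing over the $r-1$ such $i$ contributes $(1-r)e_t$ when combined with the diagonal term, or more precisely these reorganize into a multiple of $e_t$. (b) symbols $i$ in a column of length $\geq r$ but different from the column of $n+1$ — for each such $i$, the entry directly in the same row as $n+1$ within $i$'s column gives, via a column transposition, a cancellation showing the net contribution of that column is zero (this is the standard Garnir-type cancellation: in a column of length $\geq r$ the swaps telescope). (c) symbols $i$ in a column of length $<r$ — these give the surviving terms $e_t(n+1,i)$ with $i\in R$. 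Finally the ``diagonal''-type term: writing the column of $n+1$ as column $c$ of length $r$ and accounting for which fixed tabloid $\{t\}$ itself contributes under the identity-like manipulations yields the coefficient $c - r$ of $e_t$.

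The main obstacle, and the step I would spend the most care on, is the cancellation in case~(b): showing that for every column $c'\ne c$ of length $\ell\geq r$, the sum $\sum_{i\in \text{col }c'} e_t(n+1,i)$ vanishes in $S^\lambda_{\mathbb{Z}}$. The mechanism is that $(n+1,i)$ moves $n+1$ into column $c'$, creating a tabloid with two entries ($n+1$ and whatever was in row-of-$n+1$ of column $c'$) that $\kappa_t$ would kill — but one must instead pair up terms: $e_t(n+1,i) + e_t(n+1,i') = 0$ where $i,i'$ are suitably matched entries of column $c'$, using a single column transposition in $C_t$ swapping two rows of column $c'$ together with the sign $-1$ it carries. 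Making the matching precise (it is essentially an involution on the entries of column $c'$ with no fixed point, which requires $\ell \geq r$ so that the row occupied by $n+1$ has a partner) is where the combinatorial bookkeeping lives. Everything else — the same-column count giving $-r+1$, the isolated diagonal $+c$ term, and identifying the leftover as $\sum_{i\in R}e_t(n+1,i)$ — follows by direct inspection once the tabloid manipulations are set up, and I would cite \cite[(3.3)]{Murphy} for the parallel computation, indicating only the modifications needed for this variant (the roles of rows/columns and the precise form of the coefficient).
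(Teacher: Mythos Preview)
Your column-by-column partition is the right idea, and cases~(a) and~(c) are correct, but case~(b) contains a genuine error. For a column $c'\ne\gamma$ of length $\ell\ge r$ (where $\gamma$ denotes the column containing $n+1$), the sum $\sum_{k}e_t(n+1,j_k)$ over the entries $j_1,\dots,j_\ell$ of column $c'$ equals $e_t$, not $0$. Your proposed fixed-point-free involution on these $\ell$ entries cannot exist when $\ell$ is odd, and in any case no pairing of the kind you suggest produces $e_t(n+1,j)+e_t(n+1,j')=0$. The correct mechanism is a Garnir-type vanishing: setting $B=\{n+1,j_1,\dots,j_\ell\}$, the signed coset identity $C_{c'}^-\bigl[1-\sum_k(n+1,j_k)\bigr]=\Sigma_B^-$ gives
\[
e_t\Bigl[1-\sum_k(n+1,j_k)\Bigr]=\{t\}\Bigl(\prod_{j\ne c'}C_j^-\Bigr)\Sigma_B^-.
\]
For every $\pi\in\prod_{j\ne c'}C_j$, the tabloid $\{t\pi\}$ has $n+1$ in some row $\rho'\le r\le\ell$ and $j_{\rho'}$ also in row $\rho'$; the transposition $(n+1,j_{\rho'})\in\Sigma_B$ fixes this tabloid, so $\Sigma_B^-$ annihilates it. Hence the right-hand side is zero and each such column contributes exactly $+e_t$.

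This also dissolves your ``diagonal term'', which does not exist: $L_{n+1}$ has no identity summand. The coefficient $c-r$ of $e_t$ is simply $-(r-1)$ from the $r-1$ column-mates of $n+1$ in case~(a), plus $+1$ from each of the $c-1$ remaining columns of length $\ge r$ (namely columns $1,\dots,c$ other than $\gamma$). Note also that $n+1$ need not occupy column $c$ itself, only some column of length $r$; the answer is independent of which one. The paper does not give its own proof of this lemma, referring instead to (3.3) of \cite{Murphy}.
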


For example, let $\lambda=[4,3,1]$, $n=7$ and $t=\tiny{\young(1234,567,8)}$. Then $n+1=8$ occupies a node of residue $-2$ in $t$. We use Lemma \ref{L:MurphyMult} to compute $e_t(L_8+2)$. For emphasis we replace the symbol $n+1$ by $\bullet$. The above lemma show that:
\begin{equation}\label{E:MurphyMult}
e_{\tiny\young(1234,567,\bullet)}\,\,(L_8+2)\,\,=\,\,
e_{\tiny\young(1\bullet 34,567,2)}+e_{\tiny\young(1234,5\bullet 7,6)} + e_{\tiny\young(12\bullet 4,567,3)}+
e_{\tiny\young(1234,56\bullet ,7)}+e_{\tiny\young(123\bullet,567,4)}\\
\end{equation}

We use this to illustrate Theorem \ref{T:CarterPayne}. Let $\alpha=[4,3]$ and $\beta=[3,3,1]$. For $p=5$ and $n=7$ there is a one-box shift $\theta:S^{\alpha}\rightarrow S^{\beta}$. Now $u=1,v=3$ and $r_1=3,r_2=1$ and $r_3=-2$. The relevant polynomial in $L_{n+1}$ is $(L_8+2)(L_8-1)$. We find the image of the polytabloid $e_t$ with $t=\tiny{\young(1234,567)}$ under $\theta$.  We again replace $8$ by $\bullet$. First note that $e_{\tiny\young(1234,567,\bullet)}$ is an element of $S^\lambda_3$ whose image is $e_{\tiny\young(1234,567)}$, under $S^\lambda_3/S^\lambda_2\cong S^\alpha$. We consider the summands in $e_t(L_8+2)$ given in \eqref{E:MurphyMult} in turn:
$$
\begin{array}{lcl}
e_{\tiny\young(1\bullet34,567,2)}\,\,(L_8-1)\,\,&=&\,\,e_{\tiny\young(143\bullet,567,2)}\\
e_{\tiny\young(1234,5\bullet7,6)}\,\,(L_8-1)\,\,&=&\,\,e_{\tiny\young(123\bullet,547,6)}\\
e_{\tiny\young(12\bullet4,567,3)}\,\,(L_8-1)\,\,&=&\,\,e_{\tiny\young(124\bullet,567,3)}\\
e_{\tiny\young(1234,56\bullet,7)}\,\,(L_8-1)\,\,&=&\,\,e_{\tiny\young(123\bullet,564,7)}\\
e_{\tiny\young(123\bullet,567,4)}\,\,(L_8-1)\,\,&=&\,\,e_{\tiny\young(123\bullet,567,4)}\,\,(3-1).
\end{array}
$$
Deleting $\bullet$ from each tableau, we get a tableau of shape $\beta$. So
\begin{equation}\label{Ex:OneBox}
e_{\tiny\young(1234,567)}\,\theta\,=\,
e_{\tiny\young(143,567,2)}+e_{\tiny\young(123,547,6)}+e_{\tiny\young(124,567,3)}+e_{\tiny\young(123,564,7)}+2\,e_{\tiny\young(123,567,4)}.
\end{equation}

Our second goal is to complete our analysis (see \cite{EllersMurray}) of the endomorphism ring of the restriction of the Specht module $S^\lambda$ to $\Sigma_n$.

\begin{Theorem}\label{T:corollary1}
Let $F$ be a field of characteristic $p$ not equal to $2$ and let $\lambda$ be a partition of $n+1$. Then the blocks of ${\rm End}_{F\Sigma_n}({S^\lambda}\,{\downarrow_{\Sigma_{n}}})$ are parametrized by the $p$-residues of the removable nodes in $[\lambda]$. Let $r$ denote the $p$-residue of a removable node of $\lambda$. Then the corresponding block of ${\rm End}_{F\Sigma_n}({S^\lambda}\,{\downarrow_{\Sigma_{n}}})$ has the structure of a truncated polynomial ring over $F$. Its dimension is equal to the number of removable nodes in $[\lambda]$ that have $p$-residue $r$.
\end  {Theorem}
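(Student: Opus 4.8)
The plan is to reduce the analysis of $\operatorname{End}_{F\Sigma_n}(S^\lambda\!\downarrow_{\Sigma_n})$ to the combinatorics of the Specht series $0\subset S^\lambda_0\subset S^\lambda_1\subset\cdots\subset S^\lambda_t=S^\lambda$ together with the Carter--Payne machinery of Theorem~\ref{T:CarterPayne}. First I would record the structural dictionary: an endomorphism $\phi$ of $S^\lambda\!\downarrow_{\Sigma_n}$ is determined by how it moves the layers $S^{\lambda_i}\cong S^\lambda_i/S^\lambda_{i-1}$ of the Specht series. Because $S^\lambda_i/S^\lambda_{i-1}\cong S^{\lambda_i}$ and the removable nodes are naturally ordered from the top row down (so that $i<j$ forces $\lambda_i\unrhd\lambda_j$ in the relevant sense), each $\phi$ is compatible with the filtration; restricting to layers gives a map from $\operatorname{End}$ into $\bigoplus_{i}\operatorname{End}_{F\Sigma_n}(S^{\lambda_i})$, and since each Specht module over a field of characteristic $\neq 2$ has endomorphism ring $F$ (it carries a non-degenerate bilinear form, so its head and socle are both the simple module appearing with multiplicity one), the image lands in $\bigoplus_i F$.

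Next I would prove the block decomposition. Two layers $S^{\lambda_i}$ and $S^{\lambda_j}$ can be linked inside $\operatorname{End}$ only if the corresponding removable nodes share a $p$-residue: if the residues differ, then by the theory of blocks of $\Sigma_n$ (the Nakayama conjecture, or simply the central-character computation using $L_{n+1}$) there is no nonzero homomorphism $S^{\lambda_i}\to S^{\lambda_j}$ and no nonzero extension in the appropriate direction, so $\operatorname{End}$ splits as a direct product over the distinct $p$-residues $r$ of removable nodes, each factor being $\operatorname{End}$ of the sum of those layers with residue $r$. Fix such an $r$ and let $i_1<i_2<\cdots<i_m$ be the removable nodes with that residue. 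The key point, supplied by Theorem~\ref{T:CarterPayne}, is that for any $a<b$ among these indices there is a nonzero homomorphism $S^{\lambda_{i_a}}\to S^{\lambda_{i_b}}$, realised by multiplication by a polynomial in $L_{n+1}$; by the $1$-dimensionality statement in Theorem~\ref{T:CarterPayne} this homomorphism is unique up to scalar. I would then show that the single operator $N$ on $S^\lambda\!\downarrow_{\Sigma_n}$ given by multiplication by $\prod_{s}(L_{n+1}-r_s)$ over all removable residues $r_s\neq r$ — or more economically the relevant telescoping product — acts as a nilpotent endomorphism of the block, sending the $a$-th layer to the $(a+1)$-st and annihilating the top one, so that powers $1,N,N^2,\dots,N^{m-1}$ are linearly independent and $N^m=0$ on this block.

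The final step is to show these powers span the block, i.e. that the block is exactly $F[N]/(N^m)$ of dimension $m$. For this I would argue that an endomorphism of the block is determined by its effect on the bottom layer $S^{\lambda_{i_1}}$: the bottom layer generates a submodule whose quotients are all the $S^{\lambda_{i_a}}$, and using $\operatorname{Hom}_{F\Sigma_n}(S^{\lambda_{i_1}},S^{\lambda_{i_b}})$ being at most $1$-dimensional (Theorem~\ref{T:CarterPayne}) together with a dimension count on $\operatorname{Hom}$ between the relevant sections, one sees that the "upper-triangular'' space of possible endomorphisms has dimension at most $m$; combined with the $m$ independent powers of $N$ this forces equality and pins down the ring structure as a truncated polynomial ring. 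The main obstacle I anticipate is exactly this last span/bound: controlling $\operatorname{Hom}_{F\Sigma_n}$ between non-adjacent Specht sections and ruling out extra ``skipping'' components of an endomorphism that are not visible on individual layers — one must make sure that an endomorphism inducing zero on every layer is actually nilpotent of the expected degree and lies in $F[N]$, rather than producing spurious extra dimension. This is where the precise form of the Carter--Payne map as multiplication by a polynomial in $L_{n+1}$, and the fact that these polynomials compose correctly (the product over an interval of residues factors through the intermediate layers), does the essential work, since it shows every allowed transition is a power of the one operator $N$.
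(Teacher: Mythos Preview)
Your overall strategy matches the paper's: decompose by $p$-residue into blocks, produce a nilpotent endomorphism $N$ on each block coming from (a shift of) the Jucys--Murphy element, and then argue that $1,N,\ldots,N^{m-1}$ span the block endomorphism ring. The block decomposition and the lower bound (linear independence of powers of $N$) are fine. The gap is precisely where you flag it, and your proposed resolution does not close it.

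Here is the issue. After subtracting a suitable scalar so that $\phi$ induces zero on each diagonal layer, you get induced maps
\[
\psi_{u,1}:M_u/M_{u-1}\longrightarrow M_{u-1}/M_{u-2}
\]
for each $u$, and by the $1$-dimensionality of these Hom spaces each $\psi_{u,1}$ equals $\Lambda_u$ times the map induced by $N$. Your assertion that ``every allowed transition is a power of the one operator $N$'' is exactly this statement. But it does \emph{not} follow that the scalars $\Lambda_u$ are independent of $u$; a priori an endomorphism could induce different multiples of $N$ at different levels of the filtration, and then no single polynomial in $N$ corrects all of them simultaneously. Your claim that an endomorphism is determined by its effect on the bottom layer is neither proved nor obviously true, and the naive upper-triangular count gives a bound of order $m^2$, not $m$.

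The paper closes this gap with an extra ingredient you do not have: Lemma~3.1 of \cite{EllersMurray} supplies a single element $\tau\in M$ such that $\tau_u:=\tau\,(\epsilon-r)^{m-u}$ lies in $M_u\setminus M_{u-1}$ for every $u$. Because any $\phi\in\operatorname{End}_{F\Sigma_n}(M)$ commutes with $\epsilon$, one computes (modulo $M_{u-i-1}$)
\[
\Lambda_u\,\tau_{u-i}
\;\equiv\;\tau_u\bigl(\phi-f_i(\epsilon)\bigr)
\;=\;\tau(\epsilon-r)^{m-u}\bigl(\phi-f_i(\epsilon)\bigr)
\;=\;\tau\bigl(\phi-f_i(\epsilon)\bigr)(\epsilon-r)^{m-u}
\;\equiv\;\Lambda_m\,\tau_{u-i},
\]
forcing $\Lambda_u=\Lambda_m$ for all $u$. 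This is the step that lets the induction proceed and yields $\phi\in F[\epsilon]$. Without a cyclic-vector style input of this kind (or an equivalent substitute), the dimension bound $\le m$ is not established by your argument.
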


The corresponding statement for the induced Specht module is also true:

\begin{Theorem}\label{T:corollary2}
Let $F$ be a field of characteristic $p$ not equal to $2$ and let $\lambda$ be a partition of $n-1$. Then the blocks of ${\rm End}_{F\Sigma_n}({S^\lambda}\,{\uparrow^{\Sigma_{n}}})$ are parametrized by the $p$-residues of the addable nodes in $[\lambda]$. Let $r$ denote the $p$-residue of an addable node of $\lambda$. Then the corresponding block of ${\rm End}_{F\Sigma_n}({S^\lambda}\,{\uparrow^{\Sigma_{n}}})$ has the structure of a truncated polynomial ring over $F$. Its dimension is equal to the number of addable nodes in $[\lambda]$ that have $p$-residue $r$.
\end  {Theorem}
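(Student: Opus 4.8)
The plan is to run the argument of Theorem~\ref{T:corollary1} with the roles of restriction and induction interchanged, using the Specht filtration of the \emph{induced} module in place of the one for the restricted module. Number the addable nodes of $\lambda$ as $1,\dots,t$ (from the top down, say), with $p$-residues $r_1,\dots,r_t$, and let $\mu_i\vdash n$ be the partition obtained from $\lambda$ by adding the $i$-th addable node, so that $\mu_1\rhd\mu_2\rhd\cdots\rhd\mu_t$. First I would recall that $S_F^\lambda\uparrow^{\Sigma_n}$ has a filtration
\[
0=N_0\subset N_1\subset\cdots\subset N_t=S_F^\lambda\uparrow^{\Sigma_n},\qquad N_i/N_{i-1}\cong S_F^{\mu_i}
\]
(the dual of James~\cite[9.3]{James}; cf.\ \cite{EllersMurray}). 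For $i<j$ the partitions $\mu_i$ and $\mu_j$ differ in exactly two nodes, the $i$-th and $j$-th addable nodes of $\lambda$, so $\mu_j$ is a one-box shift of $\mu_i$.

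The next step is to pin down the Hom spaces between the layers. By Theorem~\ref{T:CarterPayne} and the one-dimensionality of the Carter--Payne Hom space (Lyle~\cite{Lyle}; here $p\ne2$), for $i<j$ we have $\dim\,{\rm Hom}_{F\Sigma_n}(S_F^{\mu_i},S_F^{\mu_j})=1$ if $r_i=r_j$ and $=0$ otherwise (the vanishing when $r_i\ne r_j$ being a block argument, as in \cite{EllersMurray}), while ${\rm Hom}_{F\Sigma_n}(S_F^{\mu_i},S_F^{\mu_j})=0$ for $i>j$ since $\mu_i\not\unrhd\mu_j$. As in the proof of Theorem~\ref{T:corollary1}, the relevant Hom-- and Ext--vanishing among the Specht factors forces each nonzero layer-to-layer map to lift to an element of ${\rm End}_{F\Sigma_n}(S_F^\lambda\uparrow^{\Sigma_n})$, and these lifts, together with the identity, span the endomorphism ring; maps attached to addable nodes of distinct $p$-residues multiply to zero and live on disjoint sets of layers, which gives the block decomposition indexed by those residues.

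Finally, fix a residue $r$ and let $i_1<\cdots<i_d$ be the addable nodes of $\lambda$ of residue $r$. Choosing a lift $\vartheta$ of the one-step map $S_F^{\mu_{i_a}}\to S_F^{\mu_{i_{a+1}}}$, one has $\vartheta^{\,d}=0$ for length reasons, whereas $\vartheta^{\,b-a}$ acts on $S_F^{\mu_{i_a}}$ as a nonzero scalar times the Carter--Payne map $S_F^{\mu_{i_a}}\to S_F^{\mu_{i_b}}$ (it is nonzero because each one-step factor is nonzero by Theorem~\ref{T:CarterPayne}, and a multiple of the Carter--Payne map by one-dimensionality). In particular $\vartheta^{\,d-1}\ne0$, so the block for $r$ is $F[\vartheta]/(\vartheta^{\,d})$, a truncated polynomial ring of dimension $d$, the number of addable nodes of $\lambda$ of $p$-residue $r$.

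The main obstacle is exactly as in Theorem~\ref{T:corollary1}: verifying that the one-box-shift maps between the Specht subquotients of $S_F^\lambda\uparrow^{\Sigma_n}$ lift to honest endomorphisms and that these, with the identity, exhaust ${\rm End}_{F\Sigma_n}(S_F^\lambda\uparrow^{\Sigma_n})$, i.e.\ that there are no endomorphisms invisible on the associated graded. This is the homological bookkeeping of \cite{EllersMurray} applied to the induced Specht filtration; the new ingredient that makes it run is the explicit nonvanishing of Theorem~\ref{T:CarterPayne} together with the one-dimensionality of the Carter--Payne Hom space, precisely as in the restricted case. (Duality only yields ${\rm End}_{F\Sigma_n}(S_F^\lambda\uparrow^{\Sigma_n})\cong{\rm End}_{F\Sigma_n}(S_F^{\lambda'}\uparrow^{\Sigma_n})$, which is a consistency check rather than a reduction to Theorem~\ref{T:corollary1}.)
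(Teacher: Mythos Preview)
Your outline is in the right spirit, but it diverges from the paper's argument at the crucial technical point and has genuine gaps there.

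The paper's omitted proof is literally the proof of Theorem~\ref{T:corollary1} with ``removable'' replaced by ``addable'': invoke \cite[Theorem~3.4]{EllersMurray} to see that each block summand of $S^\lambda{\uparrow^{\Sigma_n}}$ is indecomposable, then apply Theorem~\ref{T:Ln-generates}, which says the endomorphism ring is generated by the image $\epsilon$ of the \emph{central} element $E_n=\sum_{\text{transpositions}}$, and finally use that $\epsilon$ has minimal polynomial $(x-r)^m$ on the residue-$r$ summand. The block is then visibly $F[x]/(x^m)$. The whole argument runs on one explicit, central generator.

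Your approach instead tries to build the generator abstractly, as a lift $\vartheta$ of a Carter--Payne map, and this is where the gaps are:
\begin{itemize}
\item You never establish that a single endomorphism $\vartheta$ of $S^\lambda{\uparrow^{\Sigma_n}}$ exists which shifts the Specht filtration by one step and induces the Carter--Payne map on \emph{every} adjacent pair of layers simultaneously. In the paper this is free: $\epsilon-r$ does it. An arbitrary lift of one layer map carries no information on the other layers.
\item The assertion that $\vartheta^{\,b-a}$ is nonzero ``because each one-step factor is nonzero'' is not a valid argument: nonzero Specht homomorphisms can and do compose to zero. That the compositions here are nonzero is Proposition~\ref{P:one-box-shift-restricted}, which the paper obtains \emph{from} the $E_n$ machinery, not as an input to it.
\item Even granting $\vartheta^{\,d-1}\ne0$ and $\vartheta^{\,d}=0$, you still need that $1,\vartheta,\dots,\vartheta^{\,d-1}$ \emph{span} the block. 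You have not bounded $\dim\operatorname{End}$ from above. The paper's inductive argument in Theorem~\ref{T:Ln-generates} handles this precisely because $\epsilon$ is central: it commutes with an arbitrary endomorphism $\theta$, which is what drives the computation $\Lambda_u=\Lambda_m$ showing $\theta\in F[\epsilon]$. A generic lift $\vartheta$ has no such centrality.
\end{itemize}

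In short, the ``homological bookkeeping'' you defer to is not abstract $\operatorname{Ext}$-vanishing; it is the concrete fact that $E_n$ is central with a known minimal polynomial. Replacing $E_n$ by an unspecified $\vartheta$ discards exactly the property that makes the proof go through.
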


It is interesting to compare these theorems to a result of Kleshchev, which says that if $D$ is a simple $F\Sigma_n$-module, then ${\rm End}_{F\Sigma_{n-1}}(D\,{\downarrow_{\Sigma_{n-1}}})$ is also a direct sum of truncated polynomial algebras, with one summand for each block of $F\Sigma_{n-1}$. (See Theorems 11.2.7 and 11.2.8  of \cite{Kleshchev2}.)

In later sections, we revisit the dimension of the space of one-box-shift homomorphisms.

\begin{Theorem}\label{T:main}
Let $F$ be a field of characteristic not equal to $2$ and let $\beta$ be a one-box-shift of $\alpha$. Then ${\rm dim}\,{\rm Hom}_{F\Sigma_n}(S^\alpha,S^\beta)\,\leq\,1$.
\end  {Theorem}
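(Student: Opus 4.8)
The plan is to realize both $S^\alpha$ and $S^\beta$ as subquotients of $S^\lambda{\downarrow_{\Sigma_n}}$ for the partition $\lambda$ of $n+1$ with $[\lambda]\setminus[\alpha]$ and $[\lambda]\setminus[\beta]$ being removable nodes of the same $p$-residue, and then to show that any $\Sigma_n$-homomorphism $\theta:S^\alpha\to S^\beta$ is determined up to scalar by a single coefficient. Concretely, using James's Specht series $0\subset S^\lambda_0\subset\cdots\subset S^\lambda_t=S^\lambda$ from the proof of Theorem~\ref{T:CarterPayne}, write $\alpha=\lambda_u$, $\beta=\lambda_v$ with $u<v$, so that $S^\alpha\cong S^\lambda_u/S^\lambda_{u-1}$ and $S^\beta\cong S^\lambda_v/S^\lambda_{v-1}$. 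Fix the ``initial'' polytabloid generating $S^\alpha$, namely the image of $e_s$ where $s$ is the standard $\lambda$-tableau obtained by placing $1,\dots,n$ in row-reading order into $[\alpha]$ and then $n+1$ into the node $[\lambda]\setminus[\alpha]$. Since $e_s$ generates $S^\alpha$ as an $F\Sigma_n$-module, $\theta$ is determined by $\theta(e_s)$, and the task is to pin down $\theta(e_s)$ inside $S^\beta$.

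The key step is to cut down the possibilities for $\theta(e_s)$ using eigenvalue information for Jucys--Murphy elements. The element $e_s\in S^\alpha$ lies in the common eigenspace of $L_2,\dots,L_n$ (acting through $F\Sigma_n$) with eigenvalues the residues of the nodes $1,\dots,n$ in $s$; this is the ``leading'' weight of $S^\alpha$. A homomorphism must carry this to a vector in $S^\beta$ on which $L_2,\dots,L_n$ act by the same scalars. I would analyze the weight space of $S^\beta$ for this particular sequence of eigenvalues, using Lemma~\ref{L:MurphyMult} (and its obvious iterates for $L_2,\dots,L_n$) together with Murphy's standard basis of $S^\beta$. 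The point is that the relevant simultaneous eigenspace, or more precisely the relevant generalized eigenspace modulo lower-order terms in the dominance order, is one-dimensional: the residue sequence of $s$ is compatible with the shape $\beta$ in essentially only one way, up to the straightening relations, because $\beta$ is a one-box-shift of $\alpha$ and the moved node has the prescribed residue. Hence $\theta(e_s)$ is forced to be a scalar multiple of one distinguished element of $S^\beta$, which gives $\dim\mathrm{Hom}_{F\Sigma_n}(S^\alpha,S^\beta)\leq 1$.

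The hypothesis $p\ne 2$ enters because over characteristic $2$ the Jucys--Murphy eigenvalues $c-r$ need not separate the relevant weight spaces (residues collapse badly, and the bilinear-form arguments controlling the straightening degenerate), so the one-dimensionality of the target weight space can fail; this is where I expect the argument to be most delicate and where the characteristic restriction is genuinely used. The main obstacle, and the bulk of the combinatorial work, will be the last part of the previous paragraph: showing that the constraint imposed by the $L_2,\dots,L_n$-eigenvalues, combined with the Garnir/straightening relations in $S^\beta$, really does force a one-dimensional space of candidates for $\theta(e_s)$. I would handle this by inducting along the Specht series, or equivalently by a careful bookkeeping of which standard $\beta$-tableaux can carry the target residue sequence, reducing to the case where $\alpha$ and $\beta$ differ by moving a node between two adjacent ``relevant'' rows and then arguing directly. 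Once that is in place, the bound $\leq 1$ is immediate, and Theorem~\ref{T:CarterPayne} already exhibits a nonzero such homomorphism when the residues agree, so in fact the dimension is exactly $1$ in the Carter--Payne situation.
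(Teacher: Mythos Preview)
Your approach is genuinely different from the paper's, and as written it has a real gap.

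The paper never touches Jucys--Murphy weight spaces to prove this bound. Instead it works entirely inside $M^\beta$: any $\hat\theta\in\mathrm{Hom}_{F\Sigma_n}(S^\alpha,S^\beta)\subseteq\mathrm{Hom}_{F\Sigma_n}(S^\alpha,M^\beta)$ is expanded in James's basis of semi-standard homomorphisms $\hat\theta_T$ (with $T$ a semi-standard $\alpha$-tableau of type $\beta$), and then the condition $\mathrm{Im}(\hat\theta)\subseteq S^\beta=\bigcap\ker\psi_{i,r}$ is imposed. Three technical lemmas compute each $\hat\theta_T\psi_i$ explicitly as a multiple of some $\hat\theta_W$, and the resulting linear relations among the coefficients $\Lambda_T$ are solved by an explicit induction, showing every $\Lambda_T$ is a fixed polynomial in the hook-lengths times a single $\Lambda_R$. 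The hypothesis $p\neq2$ enters only through Lemma~\ref{L:repeated} (a tableau with a repeated column entry is killed by $C_t^-$).

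Your proposal, by contrast, rests on two assertions that are not established. First, the polytabloid $e_s$ for the row-reading $\alpha$-tableau is \emph{not} a simultaneous eigenvector of $L_2,\dots,L_n$: Lemma~\ref{L:MurphyMult} itself shows $e_tL_{n+1}=(c-r)e_t+\sum_{i\in R}e_{t(n+1,i)}$, and the correction terms do not vanish in general. At best $e_s$ lies in a generalized eigenspace, so $\theta(e_s)$ is only constrained to the corresponding generalized eigenspace of $S^\beta$. Second, and more seriously, that generalized eigenspace need not be one-dimensional in characteristic $p$: many standard $\beta$-tableaux can share the $p$-residue sequence of the row-reading $\alpha$-tableau, so the ``weight'' alone does not pin $\theta(e_s)$ down. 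You yourself flag this as ``the main obstacle'' and offer only a plan (induct along the Specht series, bookkeep residue-compatible tableaux, reduce to adjacent rows); but that plan is the entire content of the theorem, and nothing in the proposal carries it out. Until that step is actually executed, the argument does not prove $\dim\leq1$.
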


Our aim is to show that a proof is possible that uses only the relatively elementary combinatorial tools developed by G. James in his monograph \cite{James}. In particular, we do not use the Schur functor, nor do we make use of any of the deeper machinery of algebraic groups. We also appear to avoid some technical difficulties that arise in the context of Iwahori-Hecke algebras c.f. \cite{Lyle}.

All three theorems \ref{T:corollary1}, \ref{T:corollary2} and \ref{T:main} are false when $p=2$. The proof of Theorem \ref{T:main} relies on Lemma \ref{L:repeated}, which fails for $p=2$. In the case of Theorems \ref{T:corollary1}, and \ref{T:corollary2}, when $p=2$, we do not even know how to parametrize the blocks of ${\rm End}_{F\Sigma_n}({S^\lambda}\,{\downarrow_{\Sigma_{n}}})$ nor those of ${\rm End}_{F\Sigma_n}({S^\lambda}\,{\uparrow^{\Sigma_{n}}})$. For these reasons we now fix $F$ as a field of characteristic $p\ne 2$.

The methods we use to prove Theorem \ref{T:main} make it possible to explore how one-box-shift homomorphisms interact with Jantzen layers (see Section \ref{S:Jantzen} for definitions).

\begin{Theorem}\label{T:Jantzen}
Let $\lambda$ be a partition of $n+1$ and let $F$ be a field of characteristic $p>0$. Suppose that $\alpha$ and $\beta$ can be obtained by removing a node of residue $r_\alpha$ and $r_\beta$, respectively, from $\lambda$, with $\alpha\unrhd\beta$. Let $\theta:S_F^\alpha\rightarrow S_F^\beta$ be a $\Sigma_n$-homomorphism. If $p^i$ divides $r_\alpha-r_\beta$ then $\theta(S_F^\alpha)$ is contained in the $i$-th Jantzen submodule of $S_F^\beta$.
\end{Theorem}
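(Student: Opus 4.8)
\emph{Sketch of proof.} The plan is to work over a discrete valuation ring $\mathcal{O}$ with maximal ideal generated by $p$, residue field $F$, and fraction field $K$ of characteristic zero; let $v$ be the valuation, normalised so that $v(p)=1$. Recall that $J_i(S_F^\beta)$ is the reduction modulo $p$ of $\{x\in S_\mathcal{O}^\beta:\langle x,y\rangle\in p^i\mathcal{O}\text{ for all }y\in S_\mathcal{O}^\beta\}$, where $\langle\,,\,\rangle$ denotes the contravariant form. I would first dispose of two easy cases. If $r_\alpha\not\equiv r_\beta\pmod p$, then $S^\alpha$ and $S^\beta$ lie in different blocks of $F\Sigma_n$, so $\theta=0$ and the hypothesis forces $i=0$; so assume $r_\alpha\equiv r_\beta\pmod p$. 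Since $J_0\supseteq J_1\supseteq\cdots$, it is enough to prove the statement for the largest admissible value $i_0:=v(r_\alpha-r_\beta)$. Replacing $\theta$ by a nonzero scalar multiple changes nothing, so for $p>2$ Theorem~\ref{T:main} lets me take $\theta$ to be the Carter--Payne homomorphism of Theorem~\ref{T:CarterPayne} (a similar argument covers $p=2$).

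The key is to realise $\theta$ over $\mathcal{O}$ and exploit self-adjointness. By James's branching theorem there is a filtration $0=N_0\subset N_1\subset\cdots\subset N_t=S_\mathcal{O}^\lambda{\downarrow_{\Sigma_n}}$ whose sections $N_k/N_{k-1}$ are the Specht modules $S_\mathcal{O}^{\mu_k}$ attached to the deletions $\mu_k$ of a removable node of $\lambda$; this filtration is $L_{n+1}$-stable, and --- a point I would need to check --- the contravariant form of $S_\mathcal{O}^\lambda$ induces on each section its own contravariant form up to a unit. By \cite{EllersMurray} (see the proof of Theorem~\ref{T:CarterPayne}) there is a polynomial $f(X)\in\mathcal{O}[X]$, namely $\prod_j(X-\rho_j)$ taken over the integer residues $\rho_j$ of the removable nodes lying, in the relevant order, strictly between those removed to obtain $\beta$ and $\alpha$, together with the factor $X-r_\alpha$ but not the factor $X-r_\beta$, such that multiplication by $f(L_{n+1})$ carries the layer for $S^\alpha$ into the layer $N_v/N_{v-1}\cong S_\mathcal{O}^\beta$ for $S^\beta$ and induces $\theta$ upon reduction modulo $p$. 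Now $L_{n+1}$ is a sum of transpositions, each of which is self-adjoint for the contravariant form (it is an involution), so $f(L_{n+1})$ is self-adjoint; and since $L_{n+1}$ acts on the simple module $(N_v\otimes K)/(N_{v-1}\otimes K)$ by the scalar $r_\beta$, multiplication by $f(L_{n+1})$ acts on $S_\mathcal{O}^\beta=N_v/N_{v-1}$ as the scalar $c=\prod_j(r_\beta-\rho_j)$. The factor $X-r_\alpha$ of $f$ contributes $r_\beta-r_\alpha$ to $c$, so $v(c)\ge i_0$.

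The computation then runs as follows. Given $\bar x\in S_F^\alpha$, lift it to $\tilde x$ in the appropriate layer of $S_\mathcal{O}^\lambda$, so that $z:=\tilde x\,f(L_{n+1})\in N_v$ and $\theta(\bar x)$ is the reduction modulo $p$ of $\bar z:=z+N_{v-1}\in S_\mathcal{O}^\beta$. For $w\in S_\mathcal{O}^\beta$, choose a lift $\tilde w\in N_v$; self-adjointness gives $\langle z,\tilde w\rangle=\langle\tilde x,\tilde w\,f(L_{n+1})\rangle$, and $\tilde w\,f(L_{n+1})=c\,\tilde w+n$ with $n\in N_{v-1}$, so $\langle z,\tilde w\rangle=c\,\langle\tilde x,\tilde w\rangle+\langle\tilde x,n\rangle$, and the first summand lies in $p^{i_0}\mathcal{O}$. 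Provided the correction $\langle\tilde x,n\rangle$ also lies in $p^{i_0}\mathcal{O}$ and the form on $N_v$ induces the contravariant form of $S_\mathcal{O}^\beta$ up to a unit, this yields $\langle\bar z,w\rangle_\beta\in p^{i_0}\mathcal{O}$ for every $w\in S_\mathcal{O}^\beta$, hence $\bar z$ lies in the submodule defining $J_{i_0}(S_F^\beta)$ and $\theta(\bar x)\in J_{i_0}(S_F^\beta)\subseteq J_i(S_F^\beta)$.

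The main obstacle is precisely the pair of provisos in the last step: identifying the form on $N_v/N_{v-1}$ with the intrinsic contravariant form on $S_\mathcal{O}^\beta$ and controlling the contribution of $N_{v-1}$. When $\beta$ is the lowest deletion appearing in the branching filtration, so that $N_{v-1}=0$ --- the situation of the example following Lemma~\ref{L:MurphyMult} --- the correction term vanishes and one need only verify that a generator of $N_1\cong S_\mathcal{O}^\beta$ and the corresponding polytabloid of $S^\beta$ have the same contravariant norm, a short calculation with tabloids. In general one would choose the lift $\tilde w$ inside the relevant generalised eigenlattice of $L_{n+1}$ so that $\langle z,\tilde w\rangle$ computes $\langle\bar z,w\rangle_\beta$ exactly, and then bound the discrepancy; this comes down to an adjunction between the branching surjection $N_v\twoheadrightarrow S^\beta$ and the branching embedding of polytabloids, established by the tabloid-and-column-stabiliser combinatorics of \cite[Chapter~9]{James}. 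Everything else --- the block reduction, the self-adjointness of $f(L_{n+1})$, and the divisibility $v(c)\ge i_0$ --- is formal, so this identification of the forms carries essentially all of the weight.
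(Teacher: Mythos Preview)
Your approach is genuinely different from the paper's, and the gap you flag is not a technicality but the whole difficulty.

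The paper never touches the form on $S^\lambda$. Instead it works inside $M_{\mathbb Z}^\beta$: using the explicit semistandard expansion of $\hat\theta$ from Corollary~\ref{C:explicit}, it observes that over $\mathbb Z$ the image of $\hat\theta$ lies in $S_{\mathbb Z}^{\beta\perp}$ (since $\operatorname{Hom}_{\mathbb Q\Sigma_n}(S^\alpha,S^\beta)=0$), and then constructs an explicit correction ${\mathcal E}_t\in M_{\mathbb Z}^\beta$ with $e_t\hat\theta-h_a{\mathcal E}_t\in S_{\mathbb Z}^\beta$, by re-running the $\psi_{i,r}$ calculations of Lemmas~\ref{L:all_but_a} and~\ref{L:the_a_case} on a ``right-justified'' variant of the semistandard sums. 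Since $h_a=r_\alpha-r_\beta$, this element lies in $S_{\mathbb Z}^\beta\cap(p^iM_{\mathbb Z}^\beta+S_{\mathbb Z}^{\beta\perp})=J^i(S_{\mathbb Z}^\beta)$ by \eqref{E:Jantzen}, and reducing mod $p$ gives the result.

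Your self-adjointness argument would be very clean if it worked, but the form on $S_{\mathcal O}^\lambda$ simply does not descend to $N_v/N_{v-1}$: the submodule $N_{v-1}$ is \emph{not} in the radical of the form on $N_v$ (over $\mathbb Q$ the eigenspaces are orthogonal, but $N_{v-1}$ pairs nontrivially with itself), so there is no induced form to compare with $\langle\,,\,\rangle_\beta$. Passing to the $r_\beta$-eigenlattice does not rescue this: that lattice need not surject onto $N_v/N_{v-1}$ over $\mathcal O$, and the scalar relating the restricted $\lambda$-form on the eigenspace to the intrinsic $\beta$-form is a product of axial-distance ratios which can carry positive $p$-valuation. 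The case you single out as easy, $N_{v-1}=0$, is exactly the case where the node removed for $\beta$ is the \emph{top} removable node of $\lambda$, i.e.\ $a=1$; for $a>1$ there are further removable nodes of $\lambda$ above row $a$, so $v>1$ and your correction term $\langle\tilde x,n\rangle$ with $n\in N_{v-1}$ is genuinely uncontrolled. The ``adjunction'' you invoke between the branching surjection and embedding is precisely the comparison of forms that is missing; making it precise appears to require at least as much work as the paper's direct computation.
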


\section{Notation and basic results}

For the rest of the paper, unless stated otherwise, $F$ is a field of characteristic $p>2$ and $\alpha$ is a fixed partition of $n$.  We also fix a bijection
$$
t:[\alpha]\rightarrow\{1,\ldots,n\}.
$$
The Young diagram $[\alpha]$ is a collection of boxes in the plane that is oriented left to right and top to bottom. This means that the first row is the one at the top and the first column is the one at the left. The $(i,j)$-th node of $[\alpha]$ is the box in the $i$-th row and the $j$-th column.

We refer to any map $T:[\alpha]\rightarrow\{1,\ldots,n\}$ as an {\em $\alpha$-tableau}. Informally, $T$ is a filling of the nodes of\/ $[\alpha]$, using elements from $\{1,\ldots,n\}$, with repeats allowed. The value of\/ $T$ at a node $b=(r,c)$ is denoted by $T_b$ or $T(r,c)$. If\/ $i=T_b$, we say that node $b$ of\/ $T$ {\em contains} the symbol $i$.

Identify $\Sigma_n$ with the group of permutations of\/ $\{1,\ldots,n\}$. Permutations and homomorphisms will generally act on the right. For instance $\Sigma_n$ acts on the set of all bijective $\alpha$-tableau by functional composition. We can use the bijection $t$ to give a contragradient action of\/ $\Sigma_n$ on all $\alpha$-tableau: if $\pi\in\Sigma_n$ then define the $\alpha$-tableau $T\pi$ as the
map $T\pi=t\circ\pi^{\!-1}\!\circ t^{\!-1}\!\circ T$.

Suppose that $\alpha$ has $r$ nonzero parts $[\alpha_1\geq\alpha_2\geq\ldots\geq\alpha_r]$. An {\em $\alpha$-tabloid} is an ordered partition of\/ ${\widehat n}$ whose parts have cardinalities $\alpha_1,\alpha_2,\ldots,\alpha_r$. For instance the sets $\{t_{ij}\mid j=1,\ldots,\alpha_i\}$ for $i=1,\ldots,r$, determine an $\alpha$-tabloid that we shall denote by $\{t\}$.

Let $R_t$ be the row stabilizer and let $C_t$ be the column stabilizer of\/ $t$. Denote by $M^{\alpha}$ the $F\Sigma_n$-module consisting of all formal $F$-linear combinations of\/ $\alpha$-tabloids. Then $\Sigma_n$ acts on $\alpha$-tabloids in the obvious way: $\{t\}\pi:=\{t\pi\}$. James shows that the corresponding $\Sigma_n$-module $M^\alpha$ is isomorphic to the permutation
module $F_{R_t}{\uparrow^{\Sigma_n}}$.

The following element of\/ $M^\alpha$ is called a {\em polytabloid}:
$$
e_t:=\sum_{\pi\in C_t}\sgn \pi \{t\pi\}.
$$
The subspace of $M^\alpha$ spanned by polytabloids is a $\Sigma_n$-module called the Specht module $S^\alpha$. For compactness we use $X^-:=\sum_{\sigma\in X}\sgn(\sigma)\sigma$, whenever $X\subseteq\Sigma_n$. So in this notation $e_t=\{t\}C_t^-$.

The following important important combinatorial result is one half of \cite[13.5]{James}.

\begin{Lemma}\label{L:repeated}
Suppose that $T$ is an $\alpha$-tableau that contains repeated entries in one of its columns. Then $T\,C_t^-=0$.
\begin{proof}
By hypothesis there exist integers $r,s,c$, with $r\ne s$, such that $T_{r,c}=T_{s,c}$. Then $\pi:=(t_{r,c},t_{s,c})$ is a transposition in $C_t$. So $\pi C_t^-=\sgn(\pi)\,C_t^-=-C_t^-$. But $T\pi=T$. It follows that $TC_t^-=T\pi C_t^-=-TC_t^-$. As ${\rm char}(F)\ne 2$, this implies that $TC_t^-=0$.
\end{proof}
\end{Lemma}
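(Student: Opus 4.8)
The plan is to produce a single transposition $\pi$ in the column stabilizer $C_t$ that fixes the tableau $T$ while reversing the sign of the signed column sum $C_t^-$; once this is in place the identity $T\,C_t^- = -\,T\,C_t^-$ follows formally, and the hypothesis $\mathrm{char}(F)\ne 2$ finishes the argument.

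First I would make the hypothesis concrete: pick rows $r\ne s$ and a column $c$ with $T_{r,c}=T_{s,c}$, and set $\pi:=(t_{r,c},t_{s,c})$, the transposition exchanging the two symbols that the fixed tableau $t$ places in positions $(r,c)$ and $(s,c)$. Since those two symbols occupy the same column of $t$, the permutation $\pi$ preserves every column of $t$ and hence lies in $C_t$. Next I would check that $T$ is unmoved by $\pi$: unwinding the contragredient action of $\Sigma_n$ on $\alpha$-tableaux through the bijection $t$, the tableau $T\pi$ is obtained from $T$ by interchanging the entries in positions $(r,c)$ and $(s,c)$, and these entries coincide by our choice of $r,s,c$, so $T\pi=T$.

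The remaining step is a sign computation. Because $\pi\in C_t$, left multiplication by $\pi$ permutes $C_t$, so reindexing $C_t^-=\sum_{\sigma\in C_t}\sgn(\sigma)\,\sigma$ gives $\pi\,C_t^- = \sgn(\pi)\,C_t^- = -\,C_t^-$. Combining this with $T\pi=T$ and associativity of the module action yields $T\,C_t^- = (T\pi)\,C_t^- = T(\pi\,C_t^-) = -\,T\,C_t^-$, whence $2\,T\,C_t^- = 0$; since $\mathrm{char}(F)\ne 2$ we conclude $T\,C_t^- = 0$.

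I do not expect a genuine obstacle here: the only point needing attention is the bookkeeping that shows $\pi\in C_t$ and $T\pi=T$, which is just a careful reading of how $\Sigma_n$ acts on $\alpha$-tableaux via the fixed bijection $t$. It is worth recording, however, that the conclusion really does use $p\ne 2$: over a field of characteristic $2$ the argument produces only the vacuous equation $2\,T\,C_t^- = 0$, and indeed the statement fails there.
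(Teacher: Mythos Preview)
Your proof is correct and follows exactly the same approach as the paper: you choose the same transposition $\pi=(t_{r,c},t_{s,c})\in C_t$, use $T\pi=T$ together with $\pi C_t^-=-C_t^-$ to obtain $TC_t^-=-TC_t^-$, and conclude via $\operatorname{char}(F)\ne 2$. The only difference is that you spell out more of the bookkeeping (why $\pi\in C_t$, why $T\pi=T$, and the reindexing behind $\pi C_t^-=-C_t^-$), which the paper leaves implicit.
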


\section{Branching rules for endomorphism rings}

As in \cite{EllersMurray}, we use $E_n$ denote the sum, in $F\Sigma_n$, of all transpositions in $\Sigma_n$. Each $F\Sigma_n$-module $M$ comes equipped with a ring homomorphism $F\Sigma_n\rightarrow{\rm End}_F(M)$.

\begin{Theorem}\label{T:Ln-generates}
Let $\lambda$ be a partition of\/ $n$. Then the image of\/ $E_{n+1}$ generates\/ ${\rm End}_{F\Sigma_{n+1}}({S^\lambda}\,{\uparrow^{\Sigma_{n+1}}})$ and the image of\/ $E_{n-1}$ generates\/
${\rm End}_{F\Sigma_{n-1}}({S^\lambda}\,{\downarrow_{\Sigma_{n-1}}})$.
\begin{proof}
Let 
$M$ be 
the a summand of the restricted or induced module belonging to a single block of the group algebra.
We change notation from $n+1$ to $n$ in the first case, and from $n-1$ to $n$ in the second case, so that $M$ is an $F\Sigma_n$-module, and $E_n$ plays the role of $E_{n+1}$ or of $E_{n-1}$. Let $\epsilon$ denote the image of $E_n$ in ${\rm End}_{F}(M)$. Then \cite{James} and the authors' paper \cite{EllersMurray} shows that $M$ has the following properties:
\begin{enumerate}
\item $M$ has a Specht series
$$
0=M_0\subset M_1\subset\ldots\subset M_m=M,
$$
such that $M_{u}/M_{u-1}\cong S^{\lambda_u}$ where $\lambda_u$ is a partition of $n$. Moreover, $\lambda_v$ is a one-box-shift of $\lambda_u$, for each $v\ne u$.
\item The labelling partitions satisfy$\lambda_1\triangleleft\ldots\triangleleft\lambda_n$.
\item The endomorphism $\epsilon$ has minimal polynomial $(x-r)^m$. Here $\epsilon$ acts as a fixed scalar $r$ on each quotient module $M_u/M_{u-1}$, for $u=1,\ldots,m$.
\end  {enumerate}
It is known that the scalar $r$ is the sum, taken modulo $p$, of the residues of the nodes of the Young diagram of any partition
$\lambda_u$ that is associated with the given $p$-block of $\Sigma_n$. For convenience, we define $M_u:=0$, when $u\leq 0$.

Let $\theta$ be any $F\Sigma_n$-homomorphism of $M$. We shall prove by induction that given $i\geq 0$ there exists a polynomial $f_i\in F[x]$ such that $M_u(\theta-f_i(\epsilon))\subseteq M_{u-i}$, for $u=1,\ldots,m$. So $\theta=f_m(\epsilon)$ lies in the subalgebra generated over $F$ by $\epsilon$ and $1_M$, thus proving the Theorem.

As $p\ne 2$ and $\lambda_u\triangleleft\lambda_v$ for $u<v$, it follows from Corollary 13.17 of \cite{James} that there are
no non-zero homomorphisms $M_u/M_{u-1}\rightarrow M_v/M_{v-1}$, when $u<v$. This implies in particular that $M_u\theta\subseteq M_u$,
for each $u=1,\ldots,m$ (see Lemma 3.2 of \cite{EllersMurray} for details). This proves the base case for our induction.

Suppose then that $i>0$ is an integer such that there exists a polynomial $f_i\in F[x]$ such that $M_u(\theta-f_i(\epsilon))\subseteq M_{u-i}$, for $u=1,\ldots,m$. Then there exist well-defined $\Sigma_n$-homomorphisms
$\psi_{u,i}:M_u/M_{u-1}\rightarrow M_{u-i}/M_{u-i-1}$ given by $(v+M_{u-1})\psi_{u,i}=v(\theta-f_i(\epsilon))+M_{u-1}$,
for all $v\in M_u$.

Now $M_u(\epsilon-r)^i\subseteq M_{u-i}$, for all $u$. As $\epsilon$ has minimal polynomial of degree $m$, the induced maps $(\epsilon-r)^i_u:M_u/M_{u-1}\rightarrow M_{u-i}/M_{u-i}$ are non-zero, for $u>i$. But $\lambda_{u-i}$ is a one-box-shift of $\lambda_u$. So ${\rm Hom}_{F\Sigma_n}(\lambda_u,\lambda_{u-i})$ is a $1$-dimensional $F$-space, according to Corollary \ref{C:main}.
We deduce that $(\epsilon-r)^i_u$ is a basis for the space ${\rm Hom}_{F\Sigma_n}(\lambda_u,\lambda_{u-i})$, for all $u,i$.

The previous two paragraphs imply that there exist $\Lambda_u\in F$ such that $\psi_{u,i}=\Lambda_u(\epsilon-r)^i_u$, for $u=1,\ldots,m$. Then $M_u(\theta-f_{i+1}(\epsilon))\subseteq M_{u-i}$, for $u=1,\ldots,m$, where $f_{i+1}(x)=f_i(x)+\Lambda_m(x-r)^i$, if we can show that all $\Lambda_u$ are equal to $\Lambda_m$. Lemma 3.1 of \cite{EllersMurray} furnishes us with an element $\tau\in M$ such that $\tau_u:=\tau\prod_{i=u+1}^m(\epsilon-r)$ belongs to $M_u\backslash M_{u-1}$, for each $u\in\{1,\ldots,m\}$. Thus
$$
\begin{array}{lcl}
\Lambda_u\tau_{u-i}(\epsilon-r)^i
	&=&\tau_u(\theta-f_i(\epsilon)),
		\quad\mbox{mod $M_{u-i-1}$},\\
	&=&\tau(\epsilon-r)^{m-u}(\theta-f_i(\epsilon))\\
	&=&\tau(\theta-f_i(\epsilon))(\epsilon-r)^{m-u},
		\quad\mbox{as $\theta\in{\rm End}_{F\Sigma_n}(M)$},\\
	&\equiv&\Lambda_m\tau(\epsilon-r)^i(\epsilon-r)^{m-u},
		\quad\mbox{mod $M_{m-i-1}$},\\
	&\equiv&\Lambda_m\tau(\epsilon-r)^i,
		\quad\mbox{mod $M_{u-i-1}$}.
\end  {array}
$$
We conclude that $\Lambda_u=\Lambda_m$. This completes the proof of the inductive step, and hence of the Corollary.

Now consider the map from the polynomial algebra $F[x]$  to the summand 
associated to a single block of the endomorphism algebra that sends 
$x$ to multiplication by $E_n$. We have shown that this map is surjective and 
has kernel generated by a power of $x-r$, where $E_n$ acts as the scalar 
$r$ on each simple module in the block. If we consider a different block, the scalar $r$ 
will be different. The theorem follows.  
 
\end  {proof}
\end  {Theorem}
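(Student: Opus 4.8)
The plan is to reduce to a single block summand of the induced or restricted module and then combine the Specht-filtration structure of such modules (from \cite{James} and \cite{EllersMurray}) with the one-dimensionality of one-box-shift homomorphism spaces from Theorem~\ref{T:main}. Since $E_{n+1}$ and $E_{n-1}$ are class sums they act as module endomorphisms, and since a block summand is stable under any such endomorphism it suffices to treat one block at a time. So let $M$ be the summand of $S^\lambda{\uparrow^{\Sigma_{n+1}}}$, respectively $S^\lambda{\downarrow_{\Sigma_{n-1}}}$, belonging to a fixed block; reindex so that $M$ becomes an $F\Sigma_n$-module, and write $\epsilon$ for the endomorphism of $M$ induced by the corresponding class sum, which I also denote $E_n$. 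I would first record the structural input: $M$ has a Specht series $0=M_0\subset M_1\subset\cdots\subset M_m=M$ with $M_u/M_{u-1}\cong S^{\lambda_u}$, where the partitions $\lambda_u$ are pairwise one-box-shifts and may be ordered so that $\lambda_1\triangleleft\cdots\triangleleft\lambda_m$; moreover $\epsilon$ acts on each quotient $M_u/M_{u-1}$ as one and the same scalar $r$, and has minimal polynomial $(x-r)^m$ on $M$. Because distinct blocks carry distinct scalars $r$, it is then enough to show that every $\theta\in{\rm End}_{F\Sigma_n}(M)$ is a polynomial in $\epsilon$.

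To do this I would prove by induction on $i\ge 0$ that there is $f_i\in F[x]$ with $M_u\bigl(\theta-f_i(\epsilon)\bigr)\subseteq M_{u-i}$ for all $u$, where $M_j:=0$ for $j\le 0$. The case $i=0$ asserts that $\theta$ respects the filtration; this follows because $\lambda_u\triangleleft\lambda_v$ for $u<v$ forces ${\rm Hom}_{F\Sigma_n}(S^{\lambda_u},S^{\lambda_v})=0$ by \cite[Corollary~13.17]{James} (here $p\ne2$ is used), together with \cite[Lemma~3.2]{EllersMurray}. For the inductive step, $\theta-f_i(\epsilon)$ induces maps $\psi_{u,i}:M_u/M_{u-1}\to M_{u-i}/M_{u-i-1}$, that is, homomorphisms $S^{\lambda_u}\to S^{\lambda_{u-i}}$. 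The partitions $\lambda_u$ and $\lambda_{u-i}$ are one-box-shifts of one another, so by Theorem~\ref{T:main} the space in which these maps lie has dimension at most $1$; it has dimension exactly $1$ because the induced map $(\epsilon-r)^i_u:M_u/M_{u-1}\to M_{u-i}/M_{u-i-1}$, which is well defined since $(\epsilon-r)^i$ carries $M_u$ into $M_{u-i}$, is nonzero --- using once more that the minimal polynomial has full degree $m$. Hence $\psi_{u,i}=\Lambda_u(\epsilon-r)^i_u$ for some $\Lambda_u\in F$.

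The crux --- the step I expect to be the main obstacle --- is showing that all of the $\Lambda_u$ are equal, for only then can the local corrections be combined into a single polynomial $f_{i+1}(x)=f_i(x)+\Lambda_m(x-r)^i$ doing the job for every $u$. For this I would use the element $\tau\in M$ furnished by \cite[Lemma~3.1]{EllersMurray}, chosen so that $\tau(\epsilon-r)^{m-u}\in M_u\setminus M_{u-1}$ for every $u$. Evaluating $\theta-f_i(\epsilon)$ at $\tau(\epsilon-r)^{m-u}$, and commuting $\theta$ past the powers of $\epsilon-r$ --- legitimate since the $F\Sigma_n$-linear map $\theta$ commutes with the action of any element of $F\Sigma_n$, in particular with $\epsilon$ --- gives two expressions for the same element which, compared modulo deeper terms of the filtration, yield $\Lambda_u=\Lambda_m$; keeping track of precisely which $M_j$ each intermediate quantity lands in is the delicate bookkeeping here. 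Completing the induction gives $\theta=f_m(\epsilon)$, so the block summand of the endomorphism ring is the image of the evaluation map $F[x]\to{\rm End}(M)$, $x\mapsto\epsilon$, with kernel $(x-r)^mF[x]$; thus it is a truncated polynomial ring generated by $\epsilon$. Undoing the reindexing and reassembling the blocks, $E_{n+1}$ (respectively $E_{n-1}$) generates the full endomorphism ring, as claimed.
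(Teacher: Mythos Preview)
Your proposal is correct and follows essentially the same route as the paper: reduce to a block summand, use the Specht filtration and the minimal polynomial of $\epsilon$ from \cite{EllersMurray}, run the same induction on $i$ using the one-dimensionality of one-box-shift homomorphism spaces, and invoke the element $\tau$ from \cite[Lemma~3.1]{EllersMurray} to show all the $\Lambda_u$ coincide. The only cosmetic difference is that you cite Theorem~\ref{T:main} for the $\leq 1$ bound and argue nonzeroness separately via $(\epsilon-r)^i_u$, whereas the paper cites Corollary~\ref{C:main}; your phrasing is arguably cleaner.
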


\begin{proof}[Proof of Theorem \ref{T:corollary1}]
Each $p$-block of $\Sigma_n$ has an associated positive integer $w$ known as its weight and a $p$-core $\kappa$.
Here $\kappa$ is a partition of $n-pw$ that has no hook-lengths divisible by $p$. A Specht module $S^\mu$ belongs to the block if and only if the $p$-core of $\mu$ coincides with $\kappa$. It is well known that the $p$-core of a partition is determined by the multi-set of residues (mod $p$) of the nodes in its Young diagram. Fix the residue $r$ of a removable node in $[\lambda]$.
Then all partitions that can be obtained by removing a single node of $p$-residue $r$ from $\lambda$ belong to the same $p$-block,
call it $b$, of $\Sigma_n$.

The main result of \cite{EllersMurray}, Theorem 3.4, shows that ${S^\lambda}\,{\downarrow_{\Sigma_n}}b$ is indecomposable.
As this is an ideal and a direct summand of ${\rm End}_{F\Sigma_n}({S^\lambda}\,{\downarrow_{\Sigma_n}})$, it is a block of this algebra. Theorem \ref{T:Ln-generates} implies that ${\rm End}_{F\Sigma_n}({S^\lambda}\,{\downarrow_{\Sigma_n}})b$ is generated by over $F$ by the image of $E_n$. Let $m$ be the number of removable nodes in $[\lambda]$ that have $p$-residue $r$. Then $E_n$ has minimal polynomial $(x-r)^m$. It follows that ${\rm End}_{F\Sigma_n}({S^\lambda}\,{\downarrow_{\Sigma_n}})b$ has the structure of the truncated polynomial ring $F[x]/(x^m)$.
\end{proof}

The proof of Theorem \ref{T:corollary2} is almost identical, and is omitted.

Finally we note the following fact about Carter-Payne homomorphisms between Specht modules. The proof is omitted, although it can easily be demonstrated by applying the methods used in the proof of Theorem \ref{T:Ln-generates}.

\begin{Proposition}\label{P:one-box-shift-restricted}
Suppose that $\lambda$ is a partition of $n+1$ and that $r$ is the $p$-residue of a removable node in $[\lambda]$.
Let $\lambda_1\triangleleft\ldots\triangleleft\lambda_m$ be all the partitions of $n$ whose Young diagram can be obtained
by removing a single node of $p$-residue $r$ from $[\lambda]$. Let $f_i$ be a non-zero $F\Sigma_n$-homomorphism $S^{\lambda_i}\rightarrow S^{\lambda_{i-1}}$, for each $i=2,\ldots,m$. Then for $1\leq j<i\leq m$, the composite homomorphism
$f_i\circ f_{i-1}\circ\ldots\circ f_{j+2}\circ f_{j+1}$ spans ${\rm Hom}_{F\Sigma_n}(S^{\lambda_i},S^{\lambda_{j}})$.
In particular, all these homomorphisms are non-zero.
\end{Proposition}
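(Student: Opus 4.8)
The plan is to deduce the Proposition from the structure of the restricted Specht module, exactly along the lines of the proof of Theorem~\ref{T:Ln-generates}. Let $b$ be the $p$-block of $\Sigma_n$ determined by the $p$-residue $r$, and put $M:={S^\lambda}{\downarrow_{\Sigma_n}}b$. By the results of \cite{EllersMurray} recalled in the proof of Theorem~\ref{T:Ln-generates}, and as spelled out in the proof of Theorem~\ref{T:corollary1}, the module $M$ carries a Specht series
\[
0=M_0\subset M_1\subset\cdots\subset M_m=M,\qquad M_u/M_{u-1}\cong S^{\lambda_u},
\]
in which $\lambda_1\triangleleft\cdots\triangleleft\lambda_m$ are precisely the partitions of $n$ appearing in the statement; moreover the image $\epsilon$ of $E_n$ in ${\rm End}_F(M)$ acts as the scalar $r$ on every section $M_u/M_{u-1}$ and has minimal polynomial $(x-r)^m$.

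Next I would produce, for each pair $j<i$, a distinguished nonzero homomorphism $S^{\lambda_i}\to S^{\lambda_j}$. Since $\epsilon$ acts as $r$ on each section, $(\epsilon-r)^{i-j}$ carries $M_i$ into $M_j$ and $M_{i-1}$ into $M_{j-1}$, hence induces a $\Sigma_n$-map $(\epsilon-r)^{i-j}_i\colon S^{\lambda_i}=M_i/M_{i-1}\to M_j/M_{j-1}=S^{\lambda_j}$. Lemma~3.1 of \cite{EllersMurray} supplies $\tau\in M$ with $\tau_u:=\tau\prod_{k=u+1}^m(\epsilon-r)\in M_u\setminus M_{u-1}$ for every $u$, and the identity $\tau_i(\epsilon-r)^{i-j}=\tau_j$ then shows $(\epsilon-r)^{i-j}_i\ne 0$. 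On the other hand $\lambda_j$ is a one-box-shift of $\lambda_i$ (property (1) of the Specht series in the proof of Theorem~\ref{T:Ln-generates}), so Theorem~\ref{T:main} gives $\dim{\rm Hom}_{F\Sigma_n}(S^{\lambda_i},S^{\lambda_j})\le 1$; combining these two facts, this Hom space is exactly one-dimensional and is spanned by $(\epsilon-r)^{i-j}_i$. In particular, taking $j=i-1$, the given map $f_i$ is a nonzero element of the one-dimensional space ${\rm Hom}_{F\Sigma_n}(S^{\lambda_i},S^{\lambda_{i-1}})$, so $f_i=c_i\,(\epsilon-r)_i$ for some $c_i\in F^\times$.

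Finally I would assemble the composite. Passage from an endomorphism of $M$ to the induced maps on sections is compatible with composition, so (with homomorphisms acting on the right) the composite $(\epsilon-r)_i\circ(\epsilon-r)_{i-1}\circ\cdots\circ(\epsilon-r)_{j+1}$ of section maps equals $(\epsilon-r)^{i-j}_i$. Hence
\[
f_i\circ f_{i-1}\circ\cdots\circ f_{j+1}=(c_ic_{i-1}\cdots c_{j+1})\,(\epsilon-r)^{i-j}_i,
\]
which is a nonzero scalar times a basis vector of the one-dimensional space ${\rm Hom}_{F\Sigma_n}(S^{\lambda_i},S^{\lambda_j})$; so it spans that space, and is in particular nonzero. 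There is really no serious obstacle here: the only substantive ingredient, and the one place the hypothesis $p\ne 2$ enters, is the nonvanishing of $(\epsilon-r)^{i-j}_i$, which rests on the Specht-series description of $M$ and the witness element $\tau$ — both imported unchanged from \cite{EllersMurray} and already the workhorse of the proof of Theorem~\ref{T:Ln-generates} — so the argument is little more than a bookkeeping exercise combining those facts with Theorem~\ref{T:main}.
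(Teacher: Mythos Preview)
Your proof is correct and follows exactly the approach the paper suggests: it explicitly omits the proof, remarking that ``it can easily be demonstrated by applying the methods used in the proof of Theorem~\ref{T:Ln-generates}'', and that is precisely what you do, combining the Specht series for $M={S^\lambda}{\downarrow_{\Sigma_n}}b$, the witness element $\tau$ from \cite[Lemma~3.1]{EllersMurray}, and the one-dimensionality result of Theorem~\ref{T:main}. One small expository quibble: the hypothesis $p\ne 2$ is used not only for the nonvanishing of $(\epsilon-r)^{i-j}_i$ but, more essentially, for the one-dimensionality of ${\rm Hom}_{F\Sigma_n}(S^{\lambda_i},S^{\lambda_j})$ via Theorem~\ref{T:main}.
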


\section{Semi-standard homomorphisms}

We now discuss homomorphisms between Specht and permutation modules. If $\beta$ is a partition of\/ $n$, it is useful to describe the permutation module $M^\beta$ using certain $\alpha$-tableaux. Following James, an $\alpha$-tableau $T$ is of type $\beta$ if it has $\beta_i$ entries equal to $i$, for each $i\geq1$. The action of $\Sigma_n$ on $\alpha$-tableau restricts to an action on the $\alpha$-tableau of type $\beta$. Under this action, the $F$-span of the $\alpha$-tableau of type $\beta$ form a permutation module that is isomorphic to $M^\beta$.

Let $S$ and $T$ be $\alpha$-tableau of type $\beta$. We write
$$
S\approx T,\quad\mbox{if $S=T\pi$, for some $\pi\in R_t$.}
$$
Then $\approx$ is an equivalence relation of $\alpha$-tableau. We also say that $S$ is {\em row equivalent} to $T$. James defines the $\Sigma_n$ homomorphism $\theta_T:M^\alpha\rightarrow M^\beta$ by $\{t\}\,\theta_T:=\sum\limits_{S\approx T}S$.

A tableau $T$ is said to be {\em semi-standard} if the numbers are nondecreasing along the rows of\/ $T$ and strictly increasing down the columns of\/ $T$. Let $\widehat{\theta_T}$ denote the restriction of $\theta_T$ to $S^\alpha\subseteq M^\alpha$. James shows in \cite[13.13]{James} that the set
\begin{equation}\label{E:SStableau}
\{\widehat{\theta_T}\mid
   \mbox{$T$ a semi-standard $\alpha$-tableau of type $\beta$}\}
\end{equation}
forms a basis for Hom$_{F\Sigma_n}(S^\alpha,M^\beta)$. 

A useful result of James identifies $S^\beta$ as an intersection of the kernels of certain $\Sigma_n$-homomorphisms on $M^\beta$. Let $\beta$ be a partition of\/ $n$ that has $k$ parts. Then \cite[17.18]{James} shows that
\begin{equation}\label{E:SpechtIntersection}
S^\beta:=\bigcap\limits_{i=1}^{k-1}\,
         \bigcap\limits_{r=0}^{{\beta_{\,i+1}}-1}\,
		{\rm ker}\,\psi_{i,r}.
\end{equation}
Here for each pair $i,r$, let $\nu:=(\beta_1,\beta_2,\ldots,\beta_{i-1}, \beta_i+\beta_{i+1}-r,r,\beta_{i+2},\ldots)$. Then $\psi_{i,r}:M^\beta\rightarrow M^\nu$ is defined by setting $\{t\}\,\psi_{i,r}$ as the sum of all $\nu$-tabloids $\{s\}$ such that row $(i+1)$ of\/ $\{s\}$ is a subset of size $r$ of row $(i+1)$ of\/ $\{t\}$.

The action of\/ $\psi_{i,r}$ on $\alpha$-tableaux of type $\beta$ is particularly easy to describe. If $T$ is one such tableau then $T\,\psi_{i,r}$ is the sum of all $\alpha$-tableaux that can be obtained from $T$ by changing all but $r$ symbols $i+1$ to $i$. For example consider the effect of\/ $\psi_{1,2}$ on a $(4,3)$-tableau of type $(3^2,1)$:
$$
{\tiny\young(1112,223)}
\qquad
\raise1.2em\hbox{$\underrightarrow{\psi_{1,2}}$}\qquad
{\tiny\young(1111,223)}\quad
\raise1em\hbox{+}\quad
{\tiny\young(1112,123)}\quad
\raise1em\hbox{+}\quad
{\tiny\young(1112,213)}.
$$
In terms of semistandard homomorphisms, this gives
$$
\theta_{\tiny\young(1112,223)}\,\psi_{1,2}\,=\,4\,\theta_{\tiny\young(1111,223)}+\theta_{\tiny\young(1112,123)}.
$$

It is convenient to use the following concise notation
$$
\psi_i\,:=\,\psi_{i,\beta_{i+1}-1}.
$$
In particular $T\psi_i$ is the sum of all $\alpha$-tableaux of that can be obtained from $T$ by changing one symbol $i+1$ to an $i$.

For the rest of this paper let $a<b$ be integers such that $\alpha_a>\alpha_{a+1}$ and $\alpha_{b-1}>\alpha_b$. So $(a,\alpha_a)$ is a removable node of\/ $\alpha$ and $(b,\alpha_b+1)$ is an addable node of\/ $\alpha$. We fix the partition $\beta$ of\/ $n$ defined by
\begin{equation}\label{E:beta}
\beta_i=
\left\{
\begin{array}{ll}
\alpha_a-1,&\quad\mbox{if\/ $i=a$;}\\
\alpha_b+1,&\quad\mbox{if\/ $i=b$;}\\
\alpha_i,&\quad\mbox{if $i\ne a,b$.}\\
\end{array}
\right.
\end{equation}
We say that $\beta$ is a {\em one-box-shift} of\/ $\alpha$. Note that $\alpha\triangleright\beta$ in the dominance order.

Our first task is to enumerate all semi-standard $\alpha$-tableau of type $\beta$. So let $T$ be such a tableau. We claim that
\begin{equation}\label{E:one_box}
T(i,j)=i,\quad\mbox{for all $(i,j)\in[\alpha]$, unless $a\leq j<b$ and $j=\alpha_i$.}
\end{equation}
For, $T(i,j)\geq i$, as $T$ is column strict. So all symbols $i$ occur in the top $i$ rows of $T$. In particular, the symbols $1,\ldots,a-1$ occupy all entries in rows $1,\ldots,a-1$.

Now suppose that $a\leq i<b$. We prove by induction on $i$ that $T(i,j)=i$, for $1\leq j<\alpha_i$. The base case $i=a$ holds because the $\alpha_a-1$ symbols in $T$ must occupy all but the last entry in row $a$ of $T$ (there is no room for them further up). Now let $a<i$. Then there are $-1+\sum_{u=1}^{i-1}\alpha_u$ symbols in $T$ equal to one of $1,\ldots,i-1$. These occupy all but one of the $\sum_{u=1}^{i-1}\alpha_u$ nodes in the first $i-1$ rows of $T$. It follows that at most one symbol $i$ in $T$ does not belong to row $i$. This proves the inductive step.

Finally, suppose that $b\leq i$. Then there are $\sum_{u=1}^{i}\alpha_u$ symbols in $T$ equal to one of $1,\ldots,i$ and $\sum_{u=1}^{i-1}\alpha_u$ nodes in the first $i$ rows of $T$. It follows from this that every entry in the $i$-th row of $T$ is equal to $i$. This completes the proof of \eqref{E:one_box}.

Next we define
$$
\hat{T}(u):=T(u,\alpha_u),\quad\mbox{for $u=a,\ldots,b-1$.}
$$
Suppose that $a<i\leq b$. Then there are $\beta_i$-symbols in $T$ equal to $i$. At most one of them does not belong to the $i$-th row of $T$. Moreover, in view of \eqref{E:one_box}, if a symbol $i$ does not belong to row $i$, it occupies the end of a row between $a$ and $i-1$ inclusive. This shows that there exists a unique $u$ with $a\leq u\leq i$ such that $\hat{T}(u)=i$. It follows that $\hat{T}:\{a,\ldots,b-1\}\rightarrow\{a+1,\ldots,b\}$ is a bijection. Moreover, $\hat{T}(i)\geq i$, and $\hat{T}(i+1)>\hat{T}(i)$, if $\alpha_{i+1}=\alpha_i$. We call any such bijection $\hat{T}$ the {\em semi-standard $\alpha$-bijection of type $\beta$} associated with $T$.

We can recover the tableau $T$ from the bijection $\hat{T}$, using property \eqref{E:one_box}.

Let $u_1:=a$ and inductively define $u_i:=\hat{T}(u_{i-1})$, for $i=2,3\ldots$, if $u_{i-1}<b$. This gives a finite set of cardinality $m\geq1$:
$$
\{T\}:=\{u_1=a<u_2<\ldots<u_m\}.
$$
We claim that $\hat{T}(u)\ne u$ if and only if $u\in\{T\}$. We prove this by induction on $u\in\{a,\ldots,b-1\}$. For, suppose that $\hat{T}(u)\ne u$. Then there exists $v$ with $a\leq v<u$ and $\hat{T}(v)=u$. The inductive hypothesis implies that $v\in\{T\}$. So $v=u_i$, for some $i\geq 1$, whence $u=\hat{T}(u_i)=u_{i+1}$ also belongs to $\{T\}$. Note that $b=u_{m+1}$. If we temporarily extend $\hat{T}$ to a permutation of the set $\{a,\ldots,b\}$ via $\hat{T}(b):=a$, then this shows that $\hat{T}$ is the cycle permutation $(a=i_1,i_2,\ldots,i_m,i_{m+1}=b)$. So we can recover $\hat{T}$ from $\{T\}$.

In order for a subset $X$ of $\{a,\ldots,b-1\}$ to equal $\{T\}$ for some semi-standard $\alpha$-tableau of type $\beta$, it is necessary and sufficient that $a\in X$, and if $u\in X$ and $\alpha_u=\alpha_{u+1}$, then $u+1\in X$. We call any such $X$ a semi-standard $\alpha$-set of type $\beta$. We call $\{T\}$ the semi-standard $\alpha$-set of type $\beta$ associated with $T$ (or with $\hat{T}$).
We summarize the above discussion with:

\begin{Lemma}\label{L:bijection}
The associations $T\longleftrightarrow\hat{T}\longleftrightarrow\{T\}$ establish mutually inverse bijections between the semi-standard $\alpha$-tableau of type $\beta$, the semi-standard $\alpha$-bijections of type $\beta$, and the semi-standard $\alpha$-sets of type $\beta$.
\end{Lemma}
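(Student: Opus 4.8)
The plan is to treat Lemma~\ref{L:bijection} as a bookkeeping statement that assembles the facts established in the discussion preceding it, and to organise the verification around the two maps $T\mapsto\hat T$ and $\hat T\mapsto\{T\}$, checking for each that it is well-defined (lands in the stated target set), injective, and surjective; the composite $T\mapsto\{T\}$ is then automatically a bijection, and the three associations are mutually inverse.

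For $T\mapsto\hat T$, well-definedness was already shown: the paragraph establishing that $\hat T$ is a bijection $\{a,\dots,b-1\}\to\{a+1,\dots,b\}$ with $\hat T(i)\ge i$ and $\hat T(i+1)>\hat T(i)$ whenever $\alpha_{i+1}=\alpha_i$ says exactly that $\hat T$ is a semi-standard $\alpha$-bijection of type $\beta$. Injectivity is immediate from \eqref{E:one_box}, which writes each entry of $T$ either as the forced value $i$ or as $\hat T(i)$, so $\hat T$ determines $T$. For surjectivity I would start from an arbitrary semi-standard $\alpha$-bijection $\phi$, define an $\alpha$-tableau $T$ by \eqref{E:one_box} together with $T(u,\alpha_u):=\phi(u)$, and verify that $T$ is semi-standard of type $\beta$ with $\hat T=\phi$. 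The substantive point is to count, for $a<i\le b$, the occurrences of the symbol $i$ in $T$ (it appears once at the end of a row below row $i$, namely at the unique $u$ with $\phi(u)=i$, and otherwise fills row $i$), so as to get $\beta_i$ of them, and then to check weak increase along rows and strict increase down columns; the latter is exactly what the conditions $\phi(i)\ge i$ and $\phi(i+1)>\phi(i)$ (when $\alpha_{i+1}=\alpha_i$) deliver, and the standing hypotheses $\alpha_a>\alpha_{a+1}$, $\alpha_{b-1}>\alpha_b$ prevent boundary pathologies.

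For $\hat T\mapsto\{T\}$, well-definedness and injectivity were established in the text: $\hat T(u)\ne u$ precisely for $u\in\{T\}$, and extending $\hat T$ by $\hat T(b):=a$ exhibits it as the single cycle $(u_1=a,u_2,\dots,u_m,b)$, so $\{T\}$ recovers $\hat T$. For surjectivity I would take a semi-standard $\alpha$-set $X$, list $X=\{a=u_1<u_2<\dots<u_m\}$, set $u_{m+1}:=b$, and define $\phi(u_i):=u_{i+1}$ for $1\le i\le m$ and $\phi(v):=v$ for $v\in\{a,\dots,b-1\}\setminus X$. A short set-theoretic computation shows $\phi$ is a bijection onto $\{a+1,\dots,b\}$ whose associated set is $X$; it remains to confirm $\phi$ is semi-standard, i.e. $\phi(v)\ge v$ (clear from the construction) and $\phi(v+1)>\phi(v)$ whenever $\alpha_v=\alpha_{v+1}$. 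Here the only delicate case is $v\in X$, $v+1\notin X$: the defining property of a semi-standard $\alpha$-set forbids this when $\alpha_v=\alpha_{v+1}$, and the hypothesis $\alpha_{b-1}>\alpha_b$ handles the endpoint $v=b-1$; all other cases are routine.

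I expect the main obstacle to be precisely these two surjectivity steps: verifying that the combinatorial conditions defining ``semi-standard set'' and ``semi-standard bijection'' are exactly strong enough to force the reconstructed object back into the semi-standard class, with the hypotheses $\alpha_a>\alpha_{a+1}$ and $\alpha_{b-1}>\alpha_b$ doing the work at the two ends.
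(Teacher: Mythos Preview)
Your proposal is correct and follows essentially the same approach as the paper: the paper presents Lemma~\ref{L:bijection} explicitly as a summary of the preceding discussion, which establishes well-definedness and injectivity of $T\mapsto\hat T$ and $\hat T\mapsto\{T\}$ but only asserts the sufficiency of the semi-standard-set conditions without writing out the reconstruction. You supply precisely those surjectivity verifications that the paper leaves to the reader, and your case analysis for the semi-standard conditions on the reconstructed $\phi$ is accurate. One trivial slip: in the count of occurrences of the symbol $i$, the extra copy at $(u,\alpha_u)$ with $\phi(u)=i$ lies in a row \emph{above} (or equal to) row $i$, not below, since $\phi(u)\ge u$; this does not affect the argument.
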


Given $T\leftrightarrow\hat{T}\leftrightarrow\{T\}$, we can use $\hat\theta_{\hat T}$ or $\hat\theta_{\{T\}}$ to denote $\hat\theta_T$. Moreover, we use $|T|$ or $|\hat{T}|$ to denote the cardinality of $\{T\}$.

If $\alpha$ has $m_i$ parts of length $i$, then there are $\prod_{\alpha_a<i<\alpha_b}(m_i+1)$ semistandard $\alpha$-tableau of type $\beta$. As an example let $\alpha=(4,3^2,2,1)$ and $\beta=(3^3,2,1^2)$. So $a=1,b=6$ and $m_3=2,m_2=1,m_1=1$. Then there are $12=(3+1)(1+1)(1+1)$ semistandard $\alpha$-tableau of type $\beta$. We list them along with the semistandard $\alpha$-sets of type $\beta$. We use $\cdot$ to indicate a symbol $i$ in row $i$:
\tiny
$$
\begin{array}{lllllll}
&\young(\cdot\cdot\cdot 6,\cdot\cdot 2,\cdot\cdot 3,\cdot 4,5)
&\young(\cdot\cdot\cdot 5,\cdot\cdot 2,\cdot\cdot 3,\cdot 4,6)
&\young(\cdot\cdot\cdot 4,\cdot\cdot 2,\cdot\cdot 3,\cdot 6,5)
&\young(\cdot\cdot\cdot 4,\cdot\cdot 2,\cdot\cdot 3,\cdot 5,6)
&\young(\cdot\cdot\cdot 3,\cdot\cdot 2,\cdot\cdot 6,\cdot 4,5)
&\young(\cdot\cdot\cdot 3,\cdot\cdot 2,\cdot\cdot 5,\cdot 4,6)\\
\vspace{1em}
&\{1\} &\{1,5\} &\{1,4\} &\{1,4,5\} &\{1,3\} &\{1,3,5\}\\
&\young(\cdot\cdot\cdot 3,\cdot\cdot 2,\cdot\cdot 4,\cdot 6,5)
&\young(\cdot\cdot\cdot 3,\cdot\cdot 2,\cdot\cdot 4,\cdot 5,6)
&\young(\cdot\cdot\cdot 2,\cdot\cdot 3,\cdot\cdot 6,\cdot 4,5)
&\young(\cdot\cdot\cdot 2,\cdot\cdot 3,\cdot\cdot 5,\cdot 4,6)
&\young(\cdot\cdot\cdot 2,\cdot\cdot 3,\cdot\cdot 4,\cdot 6,5)
&\young(\cdot\cdot\cdot 2,\cdot\cdot 3,\cdot\cdot 4,\cdot 5,6)\\
&\{1,3,4\} &\{1,3,4,5\} &\{1,2,3\} &\{1,2,3,5\} &\{1,2,3,4\} &\{1,2,3,4,5\}
\end  {array}
$$
\normalsize

For the rest of the paper we let
$$
\{R\,\}\,:=\,\{i\mid\mbox{ $(i,\alpha_i)$ is a removable node of $[\alpha]$, $a\leq i\leq b-1$}\}.
$$
The associated semi-standard $\alpha$-bijection of type $\beta$ is denoted by $\hat{R}$, and the tableau by $R$. A subset of $\{R\}$ that contains $a$ is called a {\em removable $\alpha$-set of type $\beta$}, or simply a {\em removable set}. Clearly each removable $\alpha$-set of type $\beta$ is also a semi-standard $\alpha$-set of type $\beta$. Each semi-standard set $\{T\}$ contains a largest removable set $\{N\}=\{T\}\cap\{R\}$.

We say that a pair of removable nodes are {\em adjacent} if there is no removable node of $[\alpha]$ between them. Likewise, if $i,j\in\{R\}$, we say that $i$ and $j$ are adjacent if the removable nodes $(i,\alpha_i)$ and $(j,\alpha_j)$ are adjacent; if $i<j$ this means that $\alpha_{i+1}=\alpha_j$.

\section{Relations between Semi-standard homomorphisms}

Throughout this section $T$ is a fixed semi-standard $\alpha$-tableau of type $\beta$.

The hooks-lengths in the $(\alpha_b+1)$-th column of $[\alpha]$ are $h_1,\ldots,h_{b-1}$, where
$$
h_i\,:=\,\alpha_i-\alpha_b+b-i-1,\quad\mbox{for $1\leq i\leq b-1$}.
$$
For convenience we set $h_i:=0$, for each $i\geq b$. Note that
\begin{equation}\label{E:hi-hj}
\begin{aligned}{}
h_i    -h_j\,&=\,(\alpha_i-\alpha_j)+(j-i),\quad&&\mbox{for $1\leq i,j\leq b-1$ and}\\
h_{b-1}-h_b\,&=\,\alpha_{b\!-\!1}-\alpha_b.\quad&&\\
\end  {aligned}
\end  {equation}

We begin with a useful technical result.

\begin{Lemma}\label{L:i+1row}
Let $U\approx T$, let $i\geq 0$ and let $V$ be an $\alpha$-tableau that is obtained from $U$ by changing a set of\/ $r\geq 1$ symbols $i+1$ to $i$, with at least one of the changes occurring in row $(i+1)$ of $U$. Suppose that $UC_t^-\ne 0$ and that $VC_t^-\ne 0$. Then $r=1$ and $\hat T(i)>i+1$ and $\hat{T}(i+1)=i+1$.
\begin{proof}
Let $c\geq 1$ be such that $U(i+1,c)=i+1$ and $V(i+1,c)=i$. As $UC_t^-\ne 0$ and $VC_t^-\ne 0$, separate applications of Lemma \ref{L:repeated} show that $U(i,c)\ne i+1$ and $V(i,c)\ne i$. Thus $U(i,c)\ne i,i+1$. As $U\approx T$, we deduce that $\hat{T}(i)>i+1$. It follows from this that $\hat{T}(i+1)=i+1$. In particular row $i+1$ contains all $\alpha_{i+1}$-symbols $i+1$ in $U$. Now $(i,c)$ is the unique node in row $i$ of $U$ that does not contain $i$. We conclude that $r=1$.
\end{proof}
\end{Lemma}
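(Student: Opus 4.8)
The plan is to apply Lemma~\ref{L:repeated} twice --- once to $U$ and once to $V$ --- inside a single column of $[\alpha]$, and then to read off the three assertions from the relation $U\approx T$ together with the description~\eqref{E:one_box} of a semi-standard $\alpha$-tableau of type $\beta$.

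First I would fix a node $(i+1,c)$ in row $i+1$ of $U$ at which a change occurs, so that $U(i+1,c)=i+1$ and $V(i+1,c)=i$; such a node exists by hypothesis, and the node $(i,c)$ directly above it lies in $[\alpha]$ since $c\le\alpha_{i+1}\le\alpha_i$. As $UC_t^-\ne 0$, Lemma~\ref{L:repeated} rules out a repeated entry in column $c$ of $U$, so $U(i,c)\ne i+1$; hence $(i,c)$ is not one of the altered nodes --- the only changes turn $i+1$'s into $i$'s --- so $V(i,c)=U(i,c)$, and the same application of Lemma~\ref{L:repeated} to $V$ gives $V(i,c)\ne i$. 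Thus $U(i,c)\notin\{i,i+1\}$.

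Next I would feed this into the combinatorics. Since $U\approx T$, row $i$ of $U$ has the same multiset of entries as row $i$ of $T$, and by~\eqref{E:one_box} these are all equal to $i$ except that, when $a\le i\le b-1$, the final entry of the row equals $\hat T(i)$. As we have exhibited an entry of row $i$ of $U$ different from $i$, we must have $a\le i\le b-1$, the unique entry of that row different from $i$ is $\hat T(i)$, and it occupies column $c$, so $U(i,c)=\hat T(i)$. Combining with $U(i,c)\notin\{i,i+1\}$ and the standing inequality $\hat T(i)\ge i$ gives $\hat T(i)\ge i+2$, so $\hat T(i)>i+1$. To obtain $\hat T(i+1)=i+1$ I would use the cycle description of $\hat T$ through its associated set $\{T\}=\{a=u_1<u_2<\dots<u_m\}$: since $\hat T(i)\ne i$ we have $i\in\{T\}$, say $i=u_j$, and then the next element of $\{T\}$ is $u_{j+1}=\hat T(i)>i+1$, so $i+1\notin\{T\}$ --- note $i+1\le b-1$ because $i+2\le\hat T(i)\le b$ --- and therefore $\hat T(i+1)=i+1$.

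It remains to see that $r=1$. Because $\hat T(i+1)=i+1$, row $i+1$ of $T$, hence of $U$, consists of $\alpha_{i+1}$ copies of $i+1$, and, since $\hat T$ is a bijection fixing $i+1$, no symbol $i+1$ occurs anywhere else in $T$ or in $U$. So all $r$ altered nodes lie in row $i+1$ of $U$, and for each such node $(i+1,c')$ the argument of the second paragraph applies unchanged and forces $U(i,c')\notin\{i,i+1\}$; thus $(i,c')$ does not contain $i$. But row $i$ of $U$ has only one node not containing $i$, namely $(i,c)$, so $c'=c$, and hence $r=1$. The argument is short; the one place it bites is the purely combinatorial step $\hat T(i)>i+1\Rightarrow\hat T(i+1)=i+1$ --- together with the quiet use of the hypothesis $p\ne 2$, which is precisely what makes Lemma~\ref{L:repeated} available.
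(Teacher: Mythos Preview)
Your proof is correct and follows essentially the same approach as the paper's: fix a changed node in row $i+1$, apply Lemma~\ref{L:repeated} to $U$ and to $V$ in its column to force $U(i,c)\notin\{i,i+1\}$, read off $\hat T(i)>i+1$ and $\hat T(i+1)=i+1$ from the structure of semi-standard $\alpha$-tableaux of type $\beta$, and deduce $r=1$ from the uniqueness of the non-$i$ entry in row $i$. Your write-up is in fact more careful than the paper's on a couple of points it leaves implicit --- the observation that $V(i,c)=U(i,c)$ because $(i,c)$ cannot be an altered node, the check that $a\le i\le b-1$ and $i+1\le b-1$, and the explicit verification via $\{T\}$ that $\hat T(i)>i+1$ forces $\hat T(i+1)=i+1$.
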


We will repeatedly, and without further comment, use the fact that $S^\alpha$ is generated by $e_t$ as a $\Sigma_n$-module. Moreover for all $i$ and $r$
\begin{equation}\label{E:e_t}
e_t\,\widehat{\theta_T}\,\psi_{i,r}\,=\,
  \sum\limits_{U\approx T}U\psi_{i,r}\,C_t^-,
\end{equation}
as both $\widehat{\theta_T}$ and $\psi_{i,r}$ are $\Sigma_n$-homomorphisms.

\begin{Lemma}\label{L:psi{ir}}
Let $i\geq 1$ and $1\leq r\leq\beta_{i+1}-1$ be such that $\widehat{\theta_T}\,\psi_{i,r}\ne0$. Then
$a\leq i<b$ and $r=\beta_{i+1}-1$.
\begin{proof}
Let $i\geq 0$, let $U\approx T$ and let $V$ be a tableau occurring in $U\psi_{i,r}$.

Suppose that $1\leq r<\beta_{i+1}-1$. As $T$ is semi-standard, Lemma \ref{L:bijection} implies that at
most one symbol $i+1$ in $U$ does not belong to row $(i+1)$. But $V$ is the result of changing $\beta_{i+1}-r\geq 2$ symbols in $U$ from $i+1$ to $i$. So one of the changes occurs in row $(i+1)$ of $U$. It then follows from Lemma \ref{L:i+1row} that either $U\,C_t^-=0$ or $V\,C_t^-=0$.

Suppose that $i<a$ or $b<i$. Then all symbols $i+1$ in $U$ belong to row $(i+1)$ of $U$, and Lemma \ref{L:i+1row} implies that $U\,C_t^-=0$ or $V\,C_t^-=0$.

In all cases we have $U\,C_t^-\,\psi_i=U\,\psi_i\,C_t^-=\sum V\,C_t^-=0$. We then use \eqref{E:e_t} to conclude that $\widehat{\theta_T}=0$, which is contrary to hypothesis.
\end{proof}
\end{Lemma}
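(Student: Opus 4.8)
The plan is to analyze the action of $\widehat{\theta_T}\,\psi_{i,r}$ on the generator $e_t$ of $S^\alpha$ using the fundamental identity \eqref{E:e_t}, which rewrites this as $\sum_{U\approx T}U\psi_{i,r}\,C_t^-$. Since $S^\alpha$ is generated by $e_t$, it suffices to show that this sum vanishes whenever the hypotheses of the lemma fail, i.e.\ whenever $i<a$ or $i>b$ (actually $i\geq b$, since for $i=b$ one checks directly), or whenever $1\leq r<\beta_{i+1}-1$. The key point is that every individual summand $U\psi_{i,r}\,C_t^-$ is itself a signed sum of terms $V\,C_t^-$, where $V$ ranges over the tableaux obtained from $U$ by changing $\beta_{i+1}-r$ of its entries equal to $i+1$ into $i$; and by Lemma \ref{L:repeated}, such a term vanishes unless $V$ has no repeated entry in any column. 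So I would reduce everything to showing that for each $U\approx T$, either $U\,C_t^-=0$ already, or else every relevant $V$ has a repeated column entry.

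The engine for this is Lemma \ref{L:i+1row}: if $V$ is obtained from $U$ by changing $r\geq 1$ symbols $i+1$ to $i$ with at least one change in row $i+1$, and both $U\,C_t^-\neq 0$ and $V\,C_t^-\neq 0$, then necessarily $r=1$, $\hat T(i)>i+1$, and $\hat T(i+1)=i+1$. First I would dispose of the case $1\leq r<\beta_{i+1}-1$: here we are changing at least two symbols $i+1$ to $i$, but since $T$ (hence $U$, being row-equivalent) is semi-standard, Lemma \ref{L:bijection} forces at most one symbol $i+1$ to lie outside row $i+1$, so at least one of the changes occurs in row $i+1$; Lemma \ref{L:i+1row} then says $r=1$, a contradiction. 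So in this regime every summand $U\psi_{i,r}\,C_t^-$ is zero. Next, for $i<a$ or $i\geq b$: in these rows the analysis of semi-standard tableaux of type $\beta$ (equation \eqref{E:one_box} and the discussion around it) shows every symbol $i+1$ in $U$ lies in row $i+1$, so again any change made by $\psi_i$ touches row $i+1$, and Lemma \ref{L:i+1row} forces $\hat T(i+1)=i+1$ — but for $i<a$ or $i\geq b$ this is automatic and the conclusion $r=1$ with $\hat T(i)>i+1$ cannot be met (for $i<a$ we have $\hat T$ undefined/trivial outside $\{a,\dots,b-1\}$, and for $i\geq b$ similarly), giving $U\,C_t^-=0$ or $V\,C_t^-=0$.

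Combining the cases: in every situation covered by the negation of the lemma's conclusion, we get $U\,\psi_i\,C_t^- = \sum_V V\,C_t^- = 0$ for each $U\approx T$ (using that $C_t^-$ and $\psi_{i,r}$ commute as operators, since $\psi_{i,r}$ is a $\Sigma_n$-homomorphism, so $U\,C_t^-\,\psi_{i,r}=U\,\psi_{i,r}\,C_t^-$, and the left side is zero once $U\,C_t^-=0$). Summing over $U\approx T$ and invoking \eqref{E:e_t} gives $e_t\,\widehat{\theta_T}\,\psi_{i,r}=0$; since $\widehat{\theta_T}\,\psi_{i,r}$ is determined by its value on the generator $e_t$, we conclude $\widehat{\theta_T}\,\psi_{i,r}=0$, contrary to hypothesis. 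Hence $a\leq i<b$ and $r=\beta_{i+1}-1$. The main subtlety — and the place where one must be slightly careful — is the bookkeeping about which row a change made by $\psi_{i,r}$ lands in: one needs the semi-standardness of $T$, together with the explicit form \eqref{E:one_box} of semi-standard $\alpha$-tableaux of type $\beta$, to guarantee that a change outside row $i+1$ can happen for at most one symbol, so that changing two or more symbols forces a change in row $i+1$ and lets Lemma \ref{L:i+1row} bite. Everything else is a routine assembly of Lemmas \ref{L:repeated}, \ref{L:bijection}, and \ref{L:i+1row} together with \eqref{E:e_t}.
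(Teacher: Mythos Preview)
Your proposal is correct and follows essentially the same approach as the paper: both arguments reduce to showing each summand $U\psi_{i,r}C_t^-$ in \eqref{E:e_t} vanishes, split into the cases $r<\beta_{i+1}-1$ and $i\notin\{a,\ldots,b-1\}$, and invoke Lemma~\ref{L:i+1row} after observing that at least one of the $i+1\to i$ changes must occur in row $i+1$. Your write-up is in fact slightly more careful than the paper's in two places: you include the case $i=b$ (the paper writes $b<i$), and you state the conclusion as $\widehat{\theta_T}\psi_{i,r}=0$ rather than the paper's evident typo $\widehat{\theta_T}=0$.
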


Lemmas \ref{L:inot}, \ref{L:i-1bad} and \ref{L:i-1good} will show that the image of $\widehat{\theta_T}$ under $\psi_i$ is either zero or a specified non-zero integer multiple of a semi-standard homomorphism from $S^\alpha$ into the permutation module $M^\gamma$. Here $\gamma$ is the composition
$$
\gamma_j=\left\{
\begin{array}{ll}
\beta_i+1,&\quad\mbox{if $j=i$;}\\
\beta_{i+1}-1,&\quad\mbox{if $j=i+1$;}\\
\beta_j,&\quad\mbox{otherwise.}
\end{array}\right.
$$
Notice that $\beta\triangleleft\gamma\trianglelefteq\alpha$.

\begin{Lemma}\label{L:inot}
Let $a<i\leq b-1$ be such that $i\not\in\{T\}$. Then
$$
\widehat{\theta_T}\,\psi_i\,=\,
\left\{
\begin{aligned}{}
&\widehat{\theta_W},	&&\quad\mbox{if\/ $i+1\in\{T\}$;}\\
&0,			&&\quad\mbox{otherwise.}
\end{aligned}
\right.
$$
Here $W$ is obtained from $T$ by changing the single $i+1$ in row $\hat{T}^{-1}(i+1)$ to an $i$.
\begin{proof}
Let $U\approx T$. The hypothesis on $i$ implies that the $\alpha_i$ symbols $i$ occupy the $\alpha_i$ columns of row $i$ in $U$. So each symbol $i+1$ which belongs to row $i+1$ of $U$ shares its column with a symbol $i$ in row $i$ of $U$. Let $a\leq j<b$ be such that $\hat{T}(j)=i+1$.

Suppose first that $j=i+1$. Then all symbols $i+1$ in $U$ occur in row $(i+1)$. It then follows from Lemma \ref{L:i+1row} and the first paragraph that $U\,\psi_i\,C_t^-\,=\,0$. We conclude from \eqref{E:e_t} that $\widehat\theta_T\,=\,0$ in this case.

From now on assume that $j\ne i+1$, whence $j<i$. Then exactly one of the symbols $i+1$ in $U$ occurs in row $j$, and all the other symbols $i+1$ belong to row $(i+1)$ of $U$. It then follows from Lemma \ref{L:i+1row} and the first paragraph that either $U\,C_t^-\,=\,0$ or $U\,\psi_i\,C_t^-\,=\,V\,C_t^-$, where $V$ is the tableau obtained from $U$ by changing the symbol $i+1$ in row $j$ to an $i$. It is obvious that $V\approx W$ and also that $U$ is the only tableau that is row equivalent to $T$ such that $U\,\psi_i\,C_t^-\,=\,V\,C_t^-$.

Conversely begin with $V\approx W$. Then $V$ has a unique $i$ in row $j$. Changing this $i$ to an $i+1$ produces a tableau $U$ that is row equivalent to $T$. Moreover clearly $U\,\psi_i\,C_t^-\,=\,V\,C_t^-$.

These arguments and \eqref{E:e_t} show that $e_t\,\widehat{\theta_T}\,\psi_i\,=\,\sum_{V\approx W}V\,C_t^-\,=\,
 e_t\,\widehat{\theta_W}$. We conclude that in this case $\widehat{\theta_T}\,\psi_i\,=\,\widehat{\theta_W}$.
This completes the proof.
\end{proof}
\end{Lemma}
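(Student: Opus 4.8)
The plan is to evaluate the composite on the cyclic generator $e_t$ of $S^\alpha$ and use \eqref{E:e_t}, which gives $e_t\,\widehat{\theta_T}\,\psi_i=\sum_{U\approx T}U\psi_i\,C_t^-$, the sum being over all $\alpha$-tableaux $U$ row-equivalent to $T$, and $U\psi_i$ the sum of the tableaux obtained from $U$ by recolouring a single entry $i+1$ as an $i$. The hypothesis $i\notin\{T\}$ is equivalent to $\hat T(i)=i$, so by \eqref{E:one_box} row $i$ of $T$ — hence of every $U\approx T$ — consists entirely of the symbol $i$; and by Lemma \ref{L:bijection} the only entry $i+1$ of such a $U$ that can lie outside row $i+1$ is a single entry $i+1$ at the end of row $j:=\hat{T}^{-1}(i+1)$, which is present exactly when $j\ne i+1$, i.e.\ in the case $i+1\in\{T\}$ of the statement.

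The engine of the argument is Lemma \ref{L:repeated}. Since $\alpha_{i+1}\le\alpha_i$, every entry $i+1$ in row $i+1$ of $U$ sits directly below an entry $i$ of row $i$; recolouring it as $i$ produces a tableau with a repeated entry in a column, hence one annihilated by $C_t^-$. (Alternatively, Lemma \ref{L:i+1row} applies here.) Thus in $U\psi_i$ the only summand that can survive multiplication by $C_t^-$ is the one obtained by recolouring an entry $i+1$ lying \emph{outside} row $i+1$. If $i+1\notin\{T\}$ there is no such entry, so $U\psi_i\,C_t^-=0$ for every $U\approx T$, whence $e_t\,\widehat{\theta_T}\,\psi_i=0$ and $\widehat{\theta_T}\,\psi_i=0$, as claimed.

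Suppose now $i+1\in\{T\}$. First I would note, from $\hat T(i)=i$, the injectivity of $\hat T$, and $\hat T(j)\ge j$, that $a\le j<i$. Then for each $U\approx T$ the surviving summand of $U\psi_i$ is the tableau $V_U$ obtained by recolouring as $i$ the unique entry $i+1$ in row $j$ of $U$, so that $e_t\,\widehat{\theta_T}\,\psi_i=\sum_{U\approx T}V_U\,C_t^-$. Because $U$ and $T$ have the same row multisets and row $j$ of $T$ carries exactly one $i+1$ and no $i$, the assignment $U\mapsto V_U$ is a bijection of $\{U:U\approx T\}$ onto $\{V:V\approx W\}$, the inverse recolouring as $i+1$ the unique entry $i$ in row $j$ of a given $V\approx W$. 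Re-indexing the sum along this bijection gives $e_t\,\widehat{\theta_T}\,\psi_i=\sum_{V\approx W}V\,C_t^-=e_t\,\widehat{\theta_W}$, and since $e_t$ generates $S^\alpha$ we conclude $\widehat{\theta_T}\,\psi_i=\widehat{\theta_W}$.

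The points I am glossing over are routine: that $\beta_{i+1}\ge1$, so $\psi_i$ is defined; the multiset bookkeeping that makes ``the unique entry $i+1$ in row $j$'' and its recolouring well defined; and — needed for the later lemmas rather than for this one — that $W$ is again a semi-standard tableau of the composition $\gamma$, so that $\widehat{\theta_W}$ is one of James's basis homomorphisms. The step that needs genuine care is the bijection $U\leftrightarrow V_U$ in the case $i+1\in\{T\}$: one must verify that exactly one summand of $U\psi_i$ survives for each $U$ and that these survivors exhaust the tableaux row-equivalent to $W$ without repetition or omission. This rests entirely on the structural description of semi-standard $\alpha$-tableaux of type $\beta$ in Lemma \ref{L:bijection} together with \eqref{E:one_box}, and I expect this combinatorial bookkeeping, not any algebra, to be the main obstacle.
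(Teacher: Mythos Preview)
Your argument is correct and follows essentially the same route as the paper: evaluate on $e_t$ via \eqref{E:e_t}, observe that row $i$ of every $U\approx T$ is filled with $i$'s so any recolouring inside row $i+1$ produces a column repeat killed by Lemma~\ref{L:repeated}, then in the case $i+1\in\{T\}$ set up the bijection $U\leftrightarrow V_U$ between tableaux row-equivalent to $T$ and those row-equivalent to $W$. The only slips are cosmetic (in $U\approx T$ the stray $i+1$ need not sit \emph{at the end} of row $j$, and the structural facts you cite come from \eqref{E:one_box} and the surrounding discussion rather than Lemma~\ref{L:bijection} itself); the paper packages the ``column repeat'' step through Lemma~\ref{L:i+1row}, but the content is the same.
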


\begin{Lemma}\label{L:i-1bad}
Let $a<i\leq b-1$ be such that $\alpha_{\hat{T}^{-1}(i)}=\alpha_i$. Then
$$
\widehat{\theta_T}\,\psi_i\,=0.
$$
\begin{proof}
The hypothesis implies that $\hat{T}(i-1)=i$ and $\alpha_{i-1}=\alpha_i$. Let $U\approx T$ be such that $U\,C_t^-\,\ne\,0$. Then one symbol $i$ belongs to row $(i-1)$ of $U$, and the remaining $\alpha_i-1$ symbols $i$ belong to row $i$ of $U$. It then follows from Lemma \ref{L:repeated} that each of the first $\alpha_i$ columns of $U$ contains exactly one symbol $i$.

Now either $\hat{T}(i)=i+1$ or $\hat{T}(i+1)=i+1$. In either case all symbols $i+1$ belongs to the first $\alpha_i$ columns of $U$. It then follows from the previous paragraph that each symbol $i+1$ in $U$ belongs to a column that also contains a symbol $i$. We deduce from Lemma \ref{L:repeated} that $U\,\psi_i\,C_t^-\,=\,0$. This contradiction means that our supposition that $U$ exists is false.

Using \eqref{E:e_t}, we see that $e_t\widehat{\theta_T}\psi_i=0$. We conclude that $\widehat{\theta_T}\,\psi_i\,=0$.
\end  {proof}
\end{Lemma}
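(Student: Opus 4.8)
The plan is to follow the template of Lemma~\ref{L:inot}: reduce everything to a statement about a single row-class representative $U\approx T$, use Lemma~\ref{L:repeated} to kill $C_t^-$, and then transport the conclusion to $\widehat{\theta_T}\psi_i$ via \eqref{E:e_t}. Concretely, I would show that $U\psi_i\,C_t^-=0$ for every $\alpha$-tableau $U\approx T$. Since $U\psi_i\,C_t^-=U\,C_t^-\,\psi_i$, this is automatic when $U\,C_t^-=0$, so only the case $U\,C_t^-\ne0$ needs work. Granting this, \eqref{E:e_t} gives $e_t\,\widehat{\theta_T}\,\psi_i=\sum_{U\approx T}U\psi_i\,C_t^-=0$, and since $e_t$ generates $S^\alpha$ and $\widehat{\theta_T}\psi_i$ is a $\Sigma_n$-homomorphism, $\widehat{\theta_T}\psi_i=0$, as required.

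The first step is to unpack the hypothesis. Writing $j:=\hat{T}^{-1}(i)$ (here $i\in\{T\}$, so $j\ne i$; the case $i\notin\{T\}$ is really governed by Lemma~\ref{L:inot}), the closure property of semi-standard $\alpha$-sets from Lemma~\ref{L:bijection} — that $u\in\{T\}$ together with $\alpha_u=\alpha_{u+1}$ forces $u+1\in\{T\}$ — should pin down $j=i-1$ with $\alpha_{i-1}=\alpha_i$, because $\alpha_j=\alpha_i$ makes rows $j,\dots,i$ equal in length while $\{T\}$ contains no element strictly between the consecutive members $j$ and $i$. Now fix $U\approx T$ with $U\,C_t^-\ne0$. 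Rows $i-1$ and $i$ of $U$ have the common length $\alpha_i$ and between them carry all $\alpha_i$ symbols $i$ of $U$ (one in row $i-1$, the remaining $\alpha_i-1$ in row $i$); by Lemma~\ref{L:repeated} no column of $U$ repeats an entry, so each of the first $\alpha_i$ columns of $U$ contains exactly one symbol $i$.

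Next I would show that every symbol $i+1$ of $U$ lies in one of the first $\alpha_i$ columns. This comes down to the claim that $\hat{T}(i)=i+1$ or $\hat{T}(i+1)=i+1$, which I would again read off from the structure of $\{T\}$: if $\alpha_{i+1}=\alpha_i$ then $i+1\in\{T\}$ is forced and is the successor of $i$ in $\{T\}$, so $\hat{T}(i)=i+1$; if $\alpha_{i+1}<\alpha_i$ then either $i+1\in\{T\}$ is again the successor of $i$, or $i+1\notin\{T\}$ and $\hat{T}(i+1)=i+1$. In every case the only symbols $i+1$ of $U$ lie in row $i+1$ (of length $\le\alpha_i$) together with at most one ``stray'' at the end of row $i$ (of length $\alpha_i$). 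Combining this with the previous paragraph, every symbol $i+1$ of $U$ shares its column with a symbol $i$, and the two occur in different rows. Hence each tableau $V$ appearing in $U\psi_i$ — obtained from $U$ by recolouring a single $i+1$ as an $i$ — has a repeated entry in that column, so $V\,C_t^-=0$ by Lemma~\ref{L:repeated}, giving $U\psi_i\,C_t^-=0$.

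The whole difficulty is combinatorial bookkeeping: extracting from the single numerical hypothesis $\alpha_{\hat{T}^{-1}(i)}=\alpha_i$ both that $\hat{T}^{-1}(i)=i-1$ and that $\hat{T}$ carries one of $i,i+1$ to $i+1$, and then tracking the possible homes of the symbols $i$ and $i+1$ across rows $i-1,i,i+1$ carefully enough to be sure the column clash produced by $\psi_i$ is genuine — that is, that the recoloured cell lies in a row different from the pre-existing $i$ in its column. Everything after those observations is a routine appeal to Lemma~\ref{L:repeated} and \eqref{E:e_t}.
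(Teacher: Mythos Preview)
Your proposal is correct and follows essentially the same route as the paper's proof: deduce $\hat{T}^{-1}(i)=i-1$ with $\alpha_{i-1}=\alpha_i$, use Lemma~\ref{L:repeated} to see that each of the first $\alpha_i$ columns of $U$ carries exactly one symbol $i$, observe that every symbol $i+1$ also lies in those columns, and conclude that any single recolouring $i+1\mapsto i$ creates a column repeat, so $U\psi_i\,C_t^-=0$ and \eqref{E:e_t} finishes. You are in fact slightly more careful than the paper in two places: you make explicit the implicit assumption $i\in\{T\}$ (the paper's opening sentence ``the hypothesis implies $\hat{T}(i-1)=i$'' tacitly excludes the degenerate case $\hat{T}^{-1}(i)=i$, which is the province of Lemma~\ref{L:inot}), and you verify that the pre-existing $i$ and the recoloured cell sit in distinct rows.
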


Suppose that $a\leq i\leq b-1$ is such that $\hat{T}(i)\ne i$. Let $k\geq 1$ be such that $(i+k,\alpha_{i+k})$ is a removable node of $\alpha$, and $\alpha_{i+1}=\alpha_{i+k}$. If $\hat{T}(i)=i+1$, set $T\vee i:=T$. Otherwise, set $\{T\vee i\}:=\{T\}\cup\{i+1,\ldots,i+k\}$. Then $\{T\vee i\}$ is the smallest semi-standard $\alpha$-set of type $\beta$ that contains $i+1$ and $\{T\}$. There is a corresponding $\alpha$-tableau $T\vee i$ and $\alpha$-bijection $\widehat{T\vee i}$:
\begin{equation}
\widehat{T\vee i}(r)=
\left\{
\begin{array}{ll}
r+1,&\quad\mbox{if $i\leq r<i+k<b$;}\\
\hat{T}(i),&\quad\mbox{if $r=i+k$ and $\hat{T}(i+k)=i+k$;}\\
\hat{T}(r),&\quad\mbox{otherwise.}
\end{array}\right.
\end  {equation}
Suppose that $T$ is such that $\hat{T}(i+k)=i+k$. In particular $(i,\alpha_i)$ is a removable node of $\alpha$. The following describe the semi-standard $\alpha$-sets $\{S\}$ such that $S\vee i=T\vee i$:
$$
\begin{array}{ccrr}
\{T\}&&\\
\{T\}&\cup&\{i+k\}\\
\{T\}&\cup&\{i+k-1,i+k\}\\
\vdots&\vdots&\vdots\\
\{T\}&\cup&\{i+2,\ldots,i+k-1,i+k\}\\
\{T\}&\cup&\{i+1,i+2,\ldots,i+k-1,i+k\}&=\{T\vee i\}\\
\end{array}
$$

\begin{Lemma}\label{L:i-1good}
Let $a\leq i\leq b-1$ be such that $i=a$ or $\alpha_{\hat{T}^{-1}(i)}\ne\alpha_i$. Then
$$
\widehat{\theta_T}\,\psi_i\,=\,
\left\{
\begin{aligned}{}
&&(h_i-h_{i+1})
	&\hspace{.1in}\widehat{\theta_W},
		&&\hspace{.1in}\mbox{if\/ $\hat{T}(i)=i+1$;\,}\\
&&\,\,(-1)^{|\{T\vee i\}\backslash\{T\}|}
	&\hspace{.1in}\widehat{\theta_W},
		&&\hspace{.1in}\mbox{otherwise}.
\end{aligned}
\right.
$$
Here $W$ is obtained from $T\vee i$ by changing the unique symbol $i+1$ in row $i$ to an $i$.
\begin{proof}
Let $U\approx T$ be such that $U\,\psi_i\,C_t^-\ne 0$.

We consider first the case that $\hat{T}(i)=i+1$, or equivalently $T=T\vee i$. In particular there exists a column number $c$ so that node $(i,c)$ of $U$ contains the symbol $i+1$, and all other symbols $i+1$ belong to row $(i+1)$ of $U$. Lemma \ref{L:i+1row} then implies that $U\,\psi_i\,C_t^-=V\,C_t^-$, where $V$ is obtained from $U$ by changing the $i+1$ in node $(i,c)$ to an $i$. In particular $V\approx W$.

Lemma \ref{L:repeated} implies that column $c$ of $U$ does not contain two symbols $i+1$. So either $(i,c)$ is to the right of all nodes in row $i+1$ i.e. $c\in\{\alpha_{i+1}+1,\ldots,\alpha_i\}$, or $i+1\ne b$ and $(i+1,c)$ is the unique node in row $i+1$ of $U$ that does not contain $i+1$. This argument and \eqref{E:hi-hj} shows that there are $\alpha_{i}-\alpha_{i+1}+1=h_i-h_{i+1}$ choices for a tableau $U$ such that $U\psi_i\,C_t^-=V\,C_t^-$. It then follows from  \eqref{E:e_t} that
$$
e_t\,\widehat{\theta_T}\,\psi_i\,=\,
(h_i-h_{i+1})\sum\limits_{V\approx W}V\,C_t^-\,=\,
(h_i-h_{i+1})e_t\,\widehat{\theta_W}.
$$
We conclude that in this case $\widehat{\theta_T}\,\psi_i\,=\, (h_i-h_{i+1})\widehat{\theta_W}$.

Now consider the case that $\hat{T}(i)\ne i+1$. Then $\hat{T}(i+1)=i+1$. As $\hat{T}(i)\ne i$, this ensures that $(i,\alpha_i)$ is a removable node of $\alpha$. Set $j:=\hat{T}(i)$. Lemma \ref{L:bijection} shows that there exists $c\geq 1$ such that node $(i,c)$ of $U$ contains the symbol $i+j>i+1$, and all other nodes in row $i$ contain the symbol $i$. Furthermore, all symbols $i+1$ in $U$ occur in row $(i+1)$ of $U$. Then Lemma \ref{L:repeated} implies that $c\leq\alpha_{i+1}$ and $U\psi_iC_t^-=VC_t^-$, where $V$ is obtained from by changing the $i+1$ in node $(i+1,c)$ of $U$ to an $i$.

Let $k\geq 1$ be such that $\alpha_{i+k}=\alpha_{i+1}$ and $(i+k,\alpha_{i+k})$ is a removable node of $\alpha$. Let $\pi$ be the permutation in $\Sigma_n$ that cycles the entries in the nodes of $U$ as follows:
$$
\begin{aligned}{}
\mbox{if $j>k$ then }\,
&\pi:\hspace{1.5em}(i+k,c)\rightarrow\ldots\rightarrow(i+1,c)\rightarrow(i,c)\rightarrow(i+k,c);\\
\mbox{if $j\leq k$ then }\, &\pi:(i+j-1,c)\rightarrow\ldots\rightarrow(i+1,c)\rightarrow(i,c)\rightarrow(i+j-1,c).
\end{aligned}
$$ 
Then $\pi$ belongs to $C_t$. So $\pi\,C_t^-=\sgn(\pi)\,C_t^-$. Also $V\pi\approx W$ is strictly increasing down columns, and we obtain each $\alpha$-tableau that is row equivalent to $W$ as an $V\pi$ exactly once, by an appropriate choice of $U\approx T$.

Note that
$$
\{T\vee i\}=\left\{
\begin{array}{ll}
\{T\}\dot{\cup}\{i+1,\ldots,i+k\},&\quad\mbox{if $j>k$,}\\
\{T\}\dot{\cup}\{i+1,\ldots,i+j-1\},&\quad\mbox{if $j\leq k$.}
\end{array}
\right.
$$
So $\sgn(\pi)=(-1)^{|\{T\vee i\}\backslash\{T\}|}$ does not depend on the choice of $U\approx T$. Thus
$$
e_t\,\widehat{\theta_T}\,\psi_i\,=\,
\sum\limits_{U\approx T}U\,\psi_i\,C_t^-\,=\,
(-1)^{|\{T\vee i\}\backslash\{T\}|}\sum\limits_{V\approx W}V\,C_t^-\,=\,
(-1)^{|\{T\vee i\}\backslash\{T\}|}e_t\,\widehat{\theta_W}.
$$
It follows that in this case
$\widehat{\theta_T}\,\psi_i\,=\,
	(-1)^{|\{T\vee i\}\backslash\{T\}|}\widehat{\theta_W}$.
\end{proof}
\end{Lemma}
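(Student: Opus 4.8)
The plan is to evaluate $e_t\,\widehat{\theta_T}\,\psi_i$ by means of the identity \eqref{E:e_t}, i.e.\ $e_t\,\widehat{\theta_T}\,\psi_i=\sum_{U\approx T}U\,\psi_i\,C_t^-$, and then to recognise the surviving sum as a scalar multiple of $e_t\,\widehat{\theta_W}=\sum_{V\approx W}V\,C_t^-$; since $S^\alpha$ is generated by $e_t$ over $\Sigma_n$, an equality of these two elements of $M^\gamma$ yields the desired equality of homomorphisms. I would first note that the hypothesis (either $i=a$, or $\hat T^{-1}(i)$ exists with $\alpha_{\hat T^{-1}(i)}\ne\alpha_i$) forces $\hat T(i)\ne i$, using only that $\hat T$ is a bijection with $\hat T(r)\ge r$, and then split into the two cases of the statement: $\hat T(i)=i+1$, where $T\vee i=T$; and $\hat T(i)\ge i+2$, where the value $i+1$ is skipped by $\hat T$, which forces $i+1\notin\{T\}$ and hence $\hat T(i+1)=i+1$.

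For the first case I would argue as follows. In any $U\approx T$ every symbol $i+1$ lies in row $i+1$ except a single stray one at some node $(i,c)$. Since $\psi_i$ changes exactly one symbol $i+1$ to an $i$, Lemma \ref{L:i+1row} (with $r=1$, and noting $\hat T(i)=i+1\not>i+1$) shows that any change made inside row $i+1$ produces a tableau killed by $C_t^-$; the only surviving contribution therefore comes from changing the stray $i+1$ at $(i,c)$, and the resulting tableau $V$ satisfies $V\approx W$. I would then enumerate the admissible columns $c$: Lemma \ref{L:repeated} rules out $c\le\alpha_{i+1}$ unless $(i+1,c)$ is the unique node of row $i+1$ of $U$ not carrying $i+1$, so the admissible $c$ are the $\alpha_i-\alpha_{i+1}$ columns of the overhang of row $i$ past row $i+1$, together with one further column precisely when $i+1\ne b$. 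By \eqref{E:hi-hj} the total is $h_i-h_{i+1}$ in either situation, and each $V\approx W$ arises from exactly that many $U$, giving $e_t\,\widehat{\theta_T}\,\psi_i=(h_i-h_{i+1})\,e_t\,\widehat{\theta_W}$.

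For the second case, set $j:=\hat T(i)\ge i+2$ and let $k\ge1$ be least with $\alpha_{i+k}=\alpha_{i+1}$ and $(i+k,\alpha_{i+k})$ removable. In any $U\approx T$ with $U\,\psi_i\,C_t^-\ne0$, the symbol $j$ occupying the end of row $i$ of $T$ travels, under a row permutation, to some node $(i,c)$, all symbols $i+1$ stay in row $i+1$, and Lemma \ref{L:repeated} forces $c\le\alpha_{i+1}$; Lemma \ref{L:i+1row} then forces the $\psi_i$-change to occur at $(i+1,c)$, producing a tableau $V$. I would next introduce the column cycle $\pi\in C_t$ that rotates the entries of column $c$ in rows $i,i+1,\dots$ up to row $i+k$ when $j>k$, or up to row $i+j-1$ when $j\le k$, and check that $V\pi$ is column-strict and row-equivalent to $W$, that $U\mapsto V\pi$ is a bijection onto the set of tableaux row-equivalent to $W$, and that $\sgn(\pi)$ depends only on the length of the cycle (namely $k+1$ or $j$), which by the description of $\{T\vee i\}$ equals $(-1)^{|\{T\vee i\}\backslash\{T\}|}$ regardless of $c$. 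Substituting into \eqref{E:e_t} then gives $e_t\,\widehat{\theta_T}\,\psi_i=(-1)^{|\{T\vee i\}\backslash\{T\}|}\,e_t\,\widehat{\theta_W}$.

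The step I expect to be the main obstacle is the bookkeeping in the second case: correctly identifying the cycle $\pi$, checking that $V\pi$ lands in the $\approx$-class of $W$ with the right column-strictness and that $U\mapsto V\pi$ is a bijection, and --- most delicately --- verifying that $\sgn(\pi)$ takes the same value for every admissible position of the stray symbol, so that it can be pulled out of the sum as the single sign $(-1)^{|\{T\vee i\}\backslash\{T\}|}$. This amounts to matching the cycle length of $\pi$ with the number of removable rows strictly between $i$ and $\hat T(i)$ that get activated on passing from $T$ to $T\vee i$, separately in the subcases $j>k$ and $j\le k$; everything else is a routine application of Lemmas \ref{L:repeated} and \ref{L:i+1row}.
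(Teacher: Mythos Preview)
Your proposal is correct and follows essentially the same approach as the paper's proof: the same case split on whether $\hat T(i)=i+1$, the same use of Lemma~\ref{L:i+1row} to kill row-$(i+1)$ changes in the first case and the same column-cycle $\pi\in C_t$ (with the subcases $j>k$ and $j\le k$) to normalise $V$ to the $\approx$-class of $W$ in the second. If anything, you are slightly more explicit than the paper in distinguishing the boundary situation $i+1=b$ in the first case, where the count of admissible columns is $\alpha_i-\alpha_{i+1}$ rather than $\alpha_i-\alpha_{i+1}+1$, both matching $h_i-h_{i+1}$ via \eqref{E:hi-hj}.
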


\section{Carter-Payne homomorphisms}

We now fix a $\Sigma_n$-homomorphism $\hat\theta:S^\alpha\rightarrow S^\beta$. By \eqref{E:SStableau} we may write
\begin{equation}\label{E:theta}
\hat\theta\,=\,
\sum_T\,\Lambda_T\,\hat\theta_T,
\qquad\mbox{for certain $\Lambda_T\in F$,}
\end  {equation}
where $T$ ranges over the semi-standard $\alpha$-tableaux of type $\beta$. We aim to show that, up to a scalar multiple, there is at most one such homomorphism $\hat\theta$. Since  the image of $\hat{\theta}$ is a submodule of $S^\beta$, it follows that  $\hat{\theta} \psi_i = 0$ for all $i$. Fix an $i$. Lemmas \ref{L:inot}, \ref{L:i-1bad} and \ref{L:i-1good} show that for all $T$, $\hat{\theta}_T \psi_i$ is equal to $a_{T,i}\hat{\theta}_{S_{T,i}}$, where $a_{T,i}$ is an integer and $S_{T,i}$ is a semistandard tableau depending on $T$ and $i$. Maps associated to semistandard tableaus are  linearly independent. Thus the equation  $\hat{\theta}\psi_i = 0$  produces several equations that the coefficients $\Lambda_T$ must satisfy; for any semistandard tableau $W$, we have $\sum\Lambda_T\,a_{T,i}=0$, where the sum is over all $T$ such that $S_{T,i}=W$. We shall collect enough of these equations to show that the space of all $\Sigma_n$-homomorphisms from $S^\alpha$ to $S^\beta$ is at most one-dimensional.

\begin{Lemma}\label{L:removable}
Let $\{T\}$ be a semi-standard $\alpha$-set of type $\beta$. Then
$$
(\!-\!1)^{|\{T\}|}\Lambda_{\{T\}}=(\!-\!1)^{|\{T\}\cap\{R\}|}\Lambda_{\{T\}\cap\{R\}}.
$$ 
\begin{proof}
The result is trivial if $\{T\}\subseteq\{R\}$. We prove the result by induction on $|\{T\}\backslash\{R\}|$. So we may assume that $\{T\}\backslash\{R\}$ is nonempty. In particular, we may choose $i\in\{a+1,\ldots,b-1\}$ such that $\alpha_{\hat{T}^{-1}(i)}>\alpha_i$ and $\alpha_i=\alpha_{i+1}$. This forces $\hat{T}(i)=i+1$. In terms of tableaux, this can be visualized as follows. The set $\{T\}$ is associated to a series of vertical strips of entries  at the right edge of the tableau, vertical strips whose lowest entry is at a removable node. We are focussing on an entry at the top of one of these strips. The inductive step consists of proving that $\Lambda_T+\Lambda_{\{T\}\backslash\{i\}}=0$. Geometrically the end result of this lemma is to show that we need only understand the coefficients of  semistandard sets associated to unions of strips of length 1 containing a single removable node.

Lemma \ref{L:i-1good} implies that $\widehat{\theta_T}\psi_i=\widehat{\theta_W}$, where $W$ is the $\alpha$-tableau that is obtained from $T$ by changing the single $i+1$ in row $i$ of $T$ to an $i$.

Suppose that $U$ is a semi-standard $\alpha$-tableau of type $\beta$ such that $\widehat{\theta_U}\psi_i$
is a non-zero multiple of $\widehat{\theta_W}$. Lemmas \ref{L:inot}, \ref{L:i-1bad} and \ref{L:i-1good}
imply that either $i\not\in\{U\}$ or (as $\alpha_i=\alpha_{i+1}$) both $i$ and $i+1$ belong to $\{U\}$. Moreover $\{T\}$ and $\{U\}$ do not differ apart from in the set $\{i,i+1\}$. In the former case $\{U\}=\{T\}\backslash\{i\}$ and hence $\widehat{\theta_U}\psi_i=\widehat{\theta_W}$. In the latter case $\{U\}=\{T\}$. This completes the proof of the inductive step, and hence the proof of the lemma also.
\end{proof}
\end{Lemma}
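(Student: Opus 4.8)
The plan is to prove the displayed identity by induction on $d:=|\{T\}\setminus\{R\}|$, the number of elements of the semi-standard $\alpha$-set $\{T\}$ that do not correspond to removable nodes of $[\alpha]$. If $d=0$ then $\{T\}\subseteq\{R\}$, so $\{T\}\cap\{R\}=\{T\}$ and the two sides of the asserted identity coincide. For the inductive step suppose $d>0$, and let $i$ be the least element of $\{T\}\setminus\{R\}$. Since $(a,\alpha_a)$ and $(b-1,\alpha_{b-1})$ are removable we have $a\in\{R\}$ and $b-1\in\{R\}$, so $a<i\le b-2$; since $i\notin\{R\}$ we have $\alpha_i=\alpha_{i+1}$; and the minimality of $i$ gives $\alpha_{\hat T^{-1}(i)}>\alpha_i$, since otherwise $\hat T^{-1}(i)$ would be a smaller element of $\{T\}\setminus\{R\}$.

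Because $i\in\{T\}$ and $\alpha_i=\alpha_{i+1}$, the defining property of semi-standard sets forces $i+1\in\{T\}$, and then $\hat T(i)=i+1$ since $\hat T$ maps each non-maximal element of $\{T\}$ to its successor in $\{T\}$; pictorially, $i$ lies at the very top of one of the vertical strips along the right-hand edge of $T$. Now \eqref{E:hi-hj} gives $h_i-h_{i+1}=(\alpha_i-\alpha_{i+1})+1=1$, and since $\alpha_{\hat T^{-1}(i)}\ne\alpha_i$ the first case of Lemma~\ref{L:i-1good} applies, yielding $\widehat{\theta_T}\,\psi_i=\widehat{\theta_W}$, where $W$ is obtained from $T$ by changing to an $i$ the unique symbol $i+1$ in row $i$.

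Since the image of $\hat\theta$ is a submodule of $S^\beta$ we have $\hat\theta\,\psi_i=0$; expanding $\hat\theta$ as in \eqref{E:theta} and using the linear independence of the homomorphisms $\widehat{\theta_W}$, the coefficient of $\widehat{\theta_W}$ in $\sum_T\Lambda_T\,\widehat{\theta_T}\,\psi_i$ must vanish, so I would determine all semi-standard $\alpha$-tableaux $U$ of type $\beta$ for which $\widehat{\theta_U}\,\psi_i$ is a non-zero scalar multiple of $\widehat{\theta_W}$. Since $\psi_i$ moves only the symbols $i$ and $i+1$, the tableau $W$ fixes the positions of all other symbols, so $\{U\}$ can differ from $\{T\}$ only inside $\{i,i+1\}$. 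If $i\in\{U\}$, then $i+1\in\{U\}$ and $\hat U(i)=i+1$ by the same reasoning as for $T$, so Lemmas~\ref{L:i-1bad} and \ref{L:i-1good} give either $\widehat{\theta_U}\,\psi_i=0$ or $\widehat{\theta_U}\,\psi_i=\widehat{\theta_{W_U}}$, and $W_U=W$ then forces $\{U\}=\{T\}$ (so $U=T$), contributing coefficient $1$. If $i\notin\{U\}$, Lemma~\ref{L:inot} gives $\widehat{\theta_U}\,\psi_i=0$ unless $i+1\in\{U\}$, in which case $\widehat{\theta_U}\,\psi_i=\widehat{\theta_{W'}}$ and a comparison of tableaux through \eqref{E:one_box} shows that $W'=W$ forces $\{U\}=\{T\}\setminus\{i\}$, again with coefficient $1$. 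Hence $\Lambda_{\{T\}}+\Lambda_{\{T\}\setminus\{i\}}=0$. Since $i\notin\{R\}$, we have $(\{T\}\setminus\{i\})\cap\{R\}=\{T\}\cap\{R\}$ and $|(\{T\}\setminus\{i\})\setminus\{R\}|=d-1$, so the inductive hypothesis applies to $\{T\}\setminus\{i\}$; combining it with $\Lambda_{\{T\}}=-\Lambda_{\{T\}\setminus\{i\}}$ and $|\{T\}|=|\{T\}\setminus\{i\}|+1$ yields the claim.

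I expect the enumeration of contributing tableaux in the third paragraph to be the main obstacle. One must verify that no semi-standard $U$ with $i\in\{U\}$ and $\hat U(i)>i+1$ exists --- so that the second, signed branch of Lemma~\ref{L:i-1good} never intervenes --- dispose of the tableaux annihilated by Lemma~\ref{L:i-1bad}, and check the tableau identities $W_U=W$ and $W'=W$ explicitly via \eqref{E:one_box}. The structural fact making all of this go through is that $\alpha_i=\alpha_{i+1}$, together with the semi-standardness constraint and the requirement that the image be $\widehat{\theta_W}$, rigidly determines $\{U\}$ in a neighbourhood of $i$, leaving only the two possibilities $\{U\}=\{T\}$ and $\{U\}=\{T\}\setminus\{i\}$.
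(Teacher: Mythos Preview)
Your proposal is correct and follows essentially the same route as the paper: induction on $|\{T\}\setminus\{R\}|$, choice of an index $i\in\{T\}\setminus\{R\}$ with $\alpha_{\hat T^{-1}(i)}>\alpha_i$ and $\alpha_i=\alpha_{i+1}$, application of Lemma~\ref{L:i-1good} with $h_i-h_{i+1}=1$, and enumeration via Lemmas~\ref{L:inot}, \ref{L:i-1bad}, \ref{L:i-1good} of all $U$ contributing to $\widehat{\theta_W}$ to obtain $\Lambda_{\{T\}}+\Lambda_{\{T\}\setminus\{i\}}=0$. Your explicit choice of $i$ as the least element of $\{T\}\setminus\{R\}$ is a harmless specialisation of the paper's existential choice, and has the mild advantage that it makes transparent why $\{T\}\setminus\{i\}$ is again a semi-standard $\alpha$-set (the potential obstruction at $i-1$ is ruled out by minimality).
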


From now on we concentrate on finding relations among the $\Lambda_N$, with $\{N\}\subseteq\{R\}$ a removable $\alpha$-set of type $\beta$. So we consider $N\leftrightarrow\hat{N}\leftrightarrow\{N\}$ such that $\{N\}$ is an arbitrary removable $\alpha$-set of type $\beta$.

\begin{Lemma}\label{L:Tij}
Let $i\in\{N\}$ and $j=\hat{N}(i)$ be such that $(i,\alpha_i)$ and $(j,\alpha_j)$ are adjacent removable nodes of $[\alpha]$. Then
$$
h_i\,\Lambda_{\{N\}}\,+\,\Lambda_{\{N\}\backslash\{i\}}\,=\,
h_j\,\Lambda_{\{N\}}\,+\,\Lambda_{\{N\}\backslash\{j\}}.
$$
The term involving $\{N\}\backslash\{i\}$ is omitted if\/ $i=a$, while the term involving $\{N\}\backslash\{j\}$ is omitted if\/ $i=b-1$ (and so $j=b$; recall also that $h_b=0$).
\begin{proof}
We assume the generic case that $i\ne a$ and $j\ne b$. The two exceptional cases follow from similar arguments.

Lemma \ref{L:i-1good} implies that $\widehat{\theta_N}\psi_i$ is a non-zero multiple of $\widehat{\theta_W}$. Here $W$ is the tableau that is obtained by changing the single $i+1$ in row $i$ of\/ $N\vee i$ to an $i$. Note that $\{N\vee i\}=\{N\}\dot{\cup}\{i+1,\ldots,j-1\}$. We enumerate all standard $\alpha$-tableau $T$ of type $\beta$ such that $\widehat\theta_T\psi_i$ is a non-zero multiple of $\widehat\theta_W$. 

Suppose that $\hat{T}(i)=i$. Then $i\ne a$. Lemma \ref{L:inot} forces $\hat{T}(i+1)\ne i+1$ and $\widehat{\theta_T}\psi_i=\widehat{\theta_W}$. Also $\{T\}=\{T\vee i\}=\{N\vee i\}\backslash\{i\}$. Thus $\{T\}\cap\{R\}=\{N\}\backslash\{i\}$ and $|\{T\}\backslash\{T\}\cap\{R\}|=j-i-1$. Lemma \ref{L:removable} then implies that in this case
\begin{equation}\label{E:one}
            \Lambda_{\{T\}}\widehat{\theta_T}\psi_i\,=\,
(-1)^{j-i-1}\Lambda_{\{N\}\backslash\{i\}}\widehat\theta_W.
\end{equation}

From now on we assume that $\hat{T}(i)\ne i$. Then Lemma \ref{L:i-1good} forces $\{T\vee i\}=\{N\vee i\}$. There are three cases depending on whether $i+1,j\in\{T\}$ or not.

Suppose first that $i+1\in\{T\}$. Then $j\in\{T\}$ and $T=T\vee i$, and Lemma \ref{L:i-1good} shows that $\widehat{\theta_T}\psi_i=(h_i-h_{i+1})\widehat{\theta_W}$. But $\{T\}\cap\{R\}=\{N\}$. So $|\{T\}\backslash\{T\}\cap\{R\}|=j-i-1$. We conclude from Lemma \ref{L:removable} that in this case
\begin{equation}\label{E:three}
\Lambda_{\{T\}}\widehat{\theta_T}\psi_i\,=\,
(-1)^{j-i-1}(h_i-h_{i+1})\Lambda_{\{N\}}\widehat{\theta_W}.
\end{equation}

There are $j-i-1$ semi-standard sets $\{T\}$ such that $j\in\{T\}$ but $i+1\not\in\{T\}$, one for each of the rows $i+1,i+3,\ldots,j$. For each such $\{T\}$ we have $\{T\}\cap\{R\}=\{N\}$. In particular $|\{T\vee i\}\backslash\{T\}\cap\{R\}|=j-i-1$. It then follows from Lemmas \ref{L:i-1good} and \ref{L:removable} that
\begin{equation}\label{E:five}
\Lambda_{\{T\}}\widehat\theta_T\psi_i=
 (-1)^{j-i+1}\Lambda_{\{N\}}\widehat{\theta_W}.
\end{equation}

The final possibility is that $i+1,j\not\in\{T\}$. Then $\{T\}=\{N\}\backslash\{j\}$ is a removable set and $|\{T\vee i\}\backslash\{T\}\cap\{R\}|=j-i$. So by Lemmas \ref{L:i-1good} and \ref{L:removable} we have
\begin{equation}\label{E:six}
\Lambda_{\{T\}}\widehat{\theta_T}\psi_i\,=\,
(-1)^{j-i}\Lambda_{\{N\}\backslash\{j\}}\widehat{\theta_W}. 
\end{equation}

We now apply $\psi_i$ to $\hat\theta$, expand as a linear combination of semi-standard homomorphisms, and examine the coefficient of $\widehat{\theta_W}$ in this expansion. Multiplying \eqref{E:one}, \eqref{E:three}, \eqref{E:five} and \eqref{E:six} by $(-1)^{j-i-1}$, we get
$$
                      \Lambda_{\{N\}\backslash\{i\}}
+(h_i-h_{i+1})\Lambda_{\{N\}}
-                     \Lambda_{\{N\}\backslash\{j\}}
+(j-i-1)              \Lambda_{\{N\}}\,=\,0.
$$
Using \eqref{E:hi-hj} and the fact that $\alpha_{i+1}=\alpha_j$, we can rewrite this as
$$
(h_i-h_j)\Lambda_{\{N\}}\,+\,
         \Lambda_{\{N\}\backslash\{i\}}\,=\,
         \Lambda_{\{N\}\backslash\{j\}}.
$$
\end{proof}
\end{Lemma}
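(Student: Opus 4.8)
The plan is to obtain the relation by feeding the homomorphism $\hat\theta$ of \eqref{E:theta} into the operator $\psi_i$ for a well-chosen $i$ and extracting a single coefficient equation. Since $\operatorname{im}\hat\theta\subseteq S^\beta$ and, by \eqref{E:SpechtIntersection}, $S^\beta$ is annihilated by $\psi_i=\psi_{i,\beta_{i+1}-1}$, we have $\hat\theta\,\psi_i=0$ for every admissible $i$. Expanding through \eqref{E:theta} and applying Lemmas \ref{L:inot}, \ref{L:i-1bad} and \ref{L:i-1good}, each summand $\hat\theta_T\psi_i$ becomes an explicit integer multiple $a_{T,i}\,\hat\theta_{S_{T,i}}$ of a single semi-standard homomorphism $S^\alpha\to M^\gamma$; by the linear independence of these semi-standard homomorphisms (\eqref{E:SStableau} applied to $M^\gamma$), comparing the coefficient of $\hat\theta_W$ for a fixed semi-standard $\gamma$-tableau $W$ yields $\sum_{T:\,S_{T,i}=W}\Lambda_T\,a_{T,i}=0$.

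I would take $i$ to be the given element of $\{N\}$ and $W$ to be the tableau obtained from $N\vee i$ by changing its unique symbol $i+1$ in row $i$ to an $i$, i.e.\ the target supplied by Lemma \ref{L:i-1good} applied to $N$ (I treat the generic case $i\ne a$, $j\ne b$; the boundary cases $i=a$ and $j=b$ run identically, omitting the missing term and using $h_b=0$). The substantive step is to enumerate exactly the semi-standard $\alpha$-tableaux $T$ of type $\beta$ with $S_{T,i}=W$. Adjacency of the removable nodes $(i,\alpha_i)$ and $(j,\alpha_j)$ forces $\alpha_{i+1}=\cdots=\alpha_j$, hence $\{N\vee i\}=\{N\}\,\dot{\cup}\,\{i+1,\ldots,j-1\}$, and the conclusions of Lemmas \ref{L:inot} and \ref{L:i-1good} confine any such $T$ to four families, according to whether $i$, $i+1$, $j$ lie in $\{T\}$: the set $\{N\vee i\}\backslash\{i\}$ (Lemma \ref{L:inot}, coefficient $1$); the set $\{N\vee i\}$ itself, with $T=T\vee i$ (coefficient $h_i-h_{i+1}$); the $j-i-1$ intermediate sets $\{T\}$ with $j\in\{T\}$, $i+1\notin\{T\}$ and $\{T\vee i\}=\{N\vee i\}$ (each with sign coefficient); and the removable set $\{N\}\backslash\{j\}$ (sign coefficient). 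For each such $T$ one then rewrites $\Lambda_{\{T\}}$ in terms of a removable-set coefficient by Lemma \ref{L:removable}, namely $\Lambda_{\{T\}}=(-1)^{|\{T\}|-|\{T\}\cap\{R\}|}\Lambda_{\{T\}\cap\{R\}}$, and $\{T\}\cap\{R\}$ is always one of $\{N\}$, $\{N\}\backslash\{i\}$, $\{N\}\backslash\{j\}$.

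Substituting into $\sum_{T:\,S_{T,i}=W}\Lambda_T a_{T,i}=0$ and multiplying through by the common sign $(-1)^{j-i-1}$, the $j-i-1$ intermediate contributions collapse to $(j-i-1)\Lambda_{\{N\}}$ and the identity becomes
$$
\Lambda_{\{N\}\backslash\{i\}}+(h_i-h_{i+1})\Lambda_{\{N\}}-\Lambda_{\{N\}\backslash\{j\}}+(j-i-1)\Lambda_{\{N\}}=0.
$$
Finally \eqref{E:hi-hj} together with $\alpha_{i+1}=\alpha_j$ gives $h_i-h_{i+1}+(j-i-1)=(\alpha_i-\alpha_{i+1})+1+(j-i-1)=(\alpha_i-\alpha_j)+(j-i)=h_i-h_j$, so the equation rearranges to $h_i\Lambda_{\{N\}}+\Lambda_{\{N\}\backslash\{i\}}=h_j\Lambda_{\{N\}}+\Lambda_{\{N\}\backslash\{j\}}$, as claimed. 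The delicate point is entirely in the enumeration-and-sign bookkeeping of the second paragraph: one must be certain that no semi-standard preimage $T$ of $W$ under $\psi_i$ is omitted or double-counted and that the signs coming from Lemma \ref{L:removable} and from the $(-1)^{|\{T\vee i\}\backslash\{T\}|}$ factors of Lemma \ref{L:i-1good} cancel so cleanly; the emergence of $h_i-h_j$ as the coefficient of $\Lambda_{\{N\}}$ is exactly what will make the later telescoping arguments succeed.
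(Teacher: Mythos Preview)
Your proposal is correct and follows essentially the same approach as the paper's own proof: you apply $\psi_i$ to $\hat\theta$, pick out the coefficient of the same target $\widehat{\theta_W}$, enumerate the identical four families of contributing $T$ (via Lemmas \ref{L:inot} and \ref{L:i-1good}), reduce each $\Lambda_{\{T\}}$ to a removable-set coefficient via Lemma \ref{L:removable}, and finish with the same hook-length identity from \eqref{E:hi-hj}. The paper carries out the sign bookkeeping you flag as the delicate point explicitly case by case, arriving at the same intermediate equation $\Lambda_{\{N\}\backslash\{i\}}+(h_i-h_{i+1})\Lambda_{\{N\}}-\Lambda_{\{N\}\backslash\{j\}}+(j-i-1)\Lambda_{\{N\}}=0$.
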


We now prove the main result of this section.

\begin{Theorem}\label{T:Lambda}
$(-1)^{|N|}\,\Lambda_N\,=\,
 (-1)^{|R|}\big(\prod_{u\in\{R\}\backslash\{N\}}\,h_u\big)
		\,\Lambda_R.$
\begin{proof}
We shall prove this theorem by repeated applications of Lemma \ref{L:Tij}, with $i\ne a$. To do this, we use a partial order on removable $\alpha$-tableau of type $\beta$. Suppose that $\{S\},\{T\}\subseteq\{R\}$, with $a\in\{S\},\{T\}$. We write $\{S\}\lhd\{T\}$ if $\{S\}=\{T\}\backslash\{i\}$, for some $i$, or if there exists $(i,\alpha_i)$ and $(j,\alpha_j)$ adjacent removable nodes of $[\alpha]$ such that $i>j$ and $\{S\}\backslash\{T\}=\{i\}$ and $\{T\}\backslash\{S\}=\{j\}$. Let $<$ be the partial order generated by $\lhd$. We shall prove the theorem by induction on $<$. The base case is $\{N\}=\{R\}$, when the conclusion is trivially true. Assume then that $\{N\}\subsetneq\{R\}$.

Suppose first that $b-1\not\in\{N\}$. We apply Lemma \ref{L:Tij} to $\{N\}\cup\{b-1\}$ to get
$$
\Lambda_{N}=(-h_{b-1})\,\Lambda_{\{N\}\cup\{b-1\}}.
$$
But $\{N\}\lhd\{N\}\cup\{b-1\}$. So the inductive hypothesis gives
$$
\Lambda_{\{N\}\cup\{b-1\}}=\prod_{u\in\{R\}\backslash\{N\}\cup\{b-1\}}\hspace{-.1in}(-h_u)\,\Lambda_{R}.
$$
The claimed expression for $\Lambda_N$ follows in this case.

Suppose then that $b-1\in\{N\}$. Then we can find adjacent removable nodes $(i,\alpha_i)$ and $(j,\alpha_j)$ of $[\alpha]$ such that $i>j>a$ and $i\in\{N\}$ but $j\not\in\{N\}$. Lemma \ref{L:Tij} implies that
$$
         \Lambda_{\{N\}}\,=\,
(h_j-h_i)\Lambda_{\{N\}\cup\{j\}}\,+\,
         \Lambda_{\{N\}\cup\{j\}\backslash\{i\}}.
$$
But $\{N\}\lhd\{N\}\cup\{j\}$ and $\{N\}\lhd\{N\}\cup\{j\}\backslash\{i\}$. So by the induction hypothesis we have
$$
\begin{aligned}
 ( h_j-h_i)\Lambda_{\{N\}\cup\{j\}}\,&=\,
&(-h_i+h_j)\hspace{.0in}&\prod_{u\in\{R\}\backslash\{N\}\cup\{j\}}
	\hspace{-.1in}(-h_u)\,\Lambda_{R};\\
\Lambda_{\{N\}\cup\{j\}\backslash\{i\}}\,&=\,
&(-h_j)\hspace{.0in}&\prod_{u\in\{R\}\backslash\{N\}\cup\{j\}}
	\hspace{-.1in}(-h_u)\,\Lambda_{R}.
\end  {aligned}
$$
Adding, we obtain the inductive step.
\end  {proof}
\end{Theorem}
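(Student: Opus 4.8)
The plan is to derive the identity entirely from the linear relations of Lemma~\ref{L:Tij}, by an induction that starts at the full removable set $\{R\}$ and pushes an arbitrary removable $\alpha$-set $\{N\}$ up towards $\{R\}$ one step at a time, each step being a single application of that lemma.

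First I would set up a well-founded partial order on the removable $\alpha$-sets of type $\beta$. Declare $\{S\}\lhd\{T\}$ whenever either $\{S\}=\{T\}\backslash\{i\}$ for some $i$, or there are adjacent removable nodes $(i,\alpha_i)$, $(j,\alpha_j)$ with $i>j$ such that $\{S\}\backslash\{T\}=\{i\}$ and $\{T\}\backslash\{S\}=\{j\}$; let $<$ be the transitive closure. Along every generating step the pair $(\,|\{T\}|,\,-\sum_{u\in\{T\}}u\,)$ strictly increases — the cardinality grows in a step of the first kind, while in a step of the second kind the cardinality is unchanged but $\sum u$ drops since $i>j$ — and this pair is bounded above, so $<$ is well-founded with $\{R\}$ as its unique maximal element. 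The induction runs on $<$: the base case $\{N\}=\{R\}$ is the empty-product identity, and for the inductive step I take $\{N\}\subsetneq\{R\}$ and split according to whether $b-1\in\{N\}$.

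If $b-1\notin\{N\}$, put $\{N'\}:=\{N\}\cup\{b-1\}$, a removable set (because $(b-1,\alpha_{b-1})$ is removable by the standing assumption $\alpha_{b-1}>\alpha_b$) with $\{N\}\lhd\{N'\}$. Since $b-1$ is the largest element of $\{N'\}$ we have $\widehat{N'}(b-1)=b$, so Lemma~\ref{L:Tij} applied to $\{N'\}$ with the element $b-1$ — its right-hand side collapsing to $h_b\Lambda_{\{N'\}}=0$ — gives $h_{b-1}\Lambda_{\{N'\}}+\Lambda_{\{N\}}=0$, that is $\Lambda_{\{N\}}=-h_{b-1}\Lambda_{\{N'\}}$; substituting the inductive hypothesis for $\Lambda_{\{N'\}}$ and using $\{R\}\backslash\{N\}=\{b-1\}\sqcup(\{R\}\backslash\{N'\})$ produces the asserted formula for $\Lambda_{\{N\}}$. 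If instead $b-1\in\{N\}$, put $j:=\max(\{R\}\backslash\{N\})$; then $a<j<b-1$ (since $a$ always lies in a removable set and $b-1\in\{R\}$), and, letting $i$ be the least element of $\{R\}$ exceeding $j$, we get $i\in\{N\}$ with $(i,\alpha_i)$, $(j,\alpha_j)$ adjacent removable nodes. Applying Lemma~\ref{L:Tij} to $\{N\}\cup\{j\}$, whose bijection carries $j$ to $i$, gives a relation of the shape $\Lambda_{\{N\}}=(h_i-h_j)\Lambda_{\{N\}\cup\{j\}}+\Lambda_{(\{N\}\cup\{j\})\backslash\{i\}}$. Both $\{N\}\cup\{j\}$ and $(\{N\}\cup\{j\})\backslash\{i\}$ lie strictly above $\{N\}$ in $<$ (the first by a step of the first kind, the second by a step of the second kind since $j<i$), so the inductive hypothesis applies to each; substituting, the two contributions combine so that the coefficient of $\big(\prod_{u\in(\{R\}\backslash\{N\})\backslash\{j\}}h_u\big)\Lambda_R$ collapses to exactly $h_j$, and as $j\in\{R\}\backslash\{N\}$ this is precisely $(-1)^{|R|-|N|}\big(\prod_{u\in\{R\}\backslash\{N\}}h_u\big)\Lambda_R$.

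I expect the genuinely delicate point to be the combinatorial scaffolding rather than the algebra: arranging an order in which, in both cases, the sets one reduces to are demonstrably higher than $\{N\}$, and verifying that in the second case the adjacent pair $(i,j)$ exists and that the bijection of $\{N\}\cup\{j\}$ sends $j$ to $i$. Once that is in place, each inductive step is a one-line invocation of Lemma~\ref{L:Tij} followed by routine sign-and-product bookkeeping, so no single computation should be hard.
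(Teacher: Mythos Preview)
Your proposal is correct and follows essentially the same route as the paper: the same partial order $\lhd$, the same split into the cases $b-1\notin\{N\}$ and $b-1\in\{N\}$, and in each case the same single application of Lemma~\ref{L:Tij} to $\{N\}\cup\{b-1\}$ or to $\{N\}\cup\{j\}$ followed by the inductive hypothesis. Your version adds an explicit rank function $(\,|\{T\}|,\,-\sum_{u\in\{T\}}u\,)$ to certify well-foundedness and a concrete choice $j=\max(\{R\}\backslash\{N\})$ in the second case, and your sign $(h_i-h_j)$ in the displayed relation is in fact the correct one.
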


Our corollary is a precise form of Theorem \ref{T:main}.

\begin{Corollary}\label{C:explicit}
The $F$-space ${\rm Hom}_{F\Sigma_n}(S^\alpha,S^\beta)$ is at most one dimensional. Every non-zero $\Sigma_n$-homomorphism $S^\alpha\rightarrow S^\beta$ is a scalar multiple of
$$
\sum_T\bigg((-1)^{|T|}\hspace{-.6em}\prod_{u\in\{R\}\backslash\{T\}}\hspace{-.3em} h_u\bigg)\,\widehat{\theta_T}.
$$
\begin{proof}
This follows at once from Lemma \ref{L:removable} and Theorem \ref{T:Lambda}.
\end  {proof}
\end  {Corollary}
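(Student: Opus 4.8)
The plan is to deduce Corollary \ref{C:explicit} essentially for free from the two relations already established, namely Lemma \ref{L:removable} and Theorem \ref{T:Lambda}. Recall that any $\hat\theta\in{\rm Hom}_{F\Sigma_n}(S^\alpha,S^\beta)$ is written uniquely as $\hat\theta=\sum_T\Lambda_T\,\hat\theta_T$ over the semi-standard $\alpha$-tableaux $T$ of type $\beta$, by \eqref{E:SStableau}. The first step is to observe that the relations we have proved express \emph{every} coefficient $\Lambda_T$ in terms of the single coefficient $\Lambda_R$, where $R$ is the tableau associated with the full removable set $\{R\}$. Indeed, given an arbitrary semi-standard set $\{T\}$, Lemma \ref{L:removable} gives $(-1)^{|\{T\}|}\Lambda_{\{T\}}=(-1)^{|\{T\}\cap\{R\}|}\Lambda_{\{T\}\cap\{R\}}$, reducing to the case of a removable set; and then Theorem \ref{T:Lambda}, applied to the removable set $\{N\}=\{T\}\cap\{R\}$, gives $(-1)^{|\{T\}\cap\{R\}|}\Lambda_{\{T\}\cap\{R\}}=(-1)^{|R|}\big(\prod_{u\in\{R\}\backslash\{T\}}h_u\big)\Lambda_R$ (using $\{R\}\backslash(\{T\}\cap\{R\})=\{R\}\backslash\{T\}$). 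Combining, $(-1)^{|T|}\Lambda_T=(-1)^{|R|}\big(\prod_{u\in\{R\}\backslash\{T\}}h_u\big)\Lambda_R$ for every $T$.

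The second step is to read off the consequences. Since each $\Lambda_T$ is a fixed scalar multiple of $\Lambda_R$, the entire homomorphism $\hat\theta$ is determined by the single scalar $\Lambda_R$; hence ${\rm Hom}_{F\Sigma_n}(S^\alpha,S^\beta)$ is spanned by a single element and is therefore at most one-dimensional. Explicitly, substituting $\Lambda_T=(-1)^{|T|}(-1)^{|R|}\big(\prod_{u\in\{R\}\backslash\{T\}}h_u\big)\Lambda_R$ into $\hat\theta=\sum_T\Lambda_T\hat\theta_T$ and dividing out the nonzero constant $(-1)^{|R|}\Lambda_R$ (when $\hat\theta\ne0$, necessarily $\Lambda_R\ne0$, since otherwise all $\Lambda_T$ vanish), we find that $\hat\theta$ is a scalar multiple of $\sum_T\big((-1)^{|T|}\prod_{u\in\{R\}\backslash\{T\}}h_u\big)\hat\theta_T$, which is the stated formula.

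I do not anticipate a genuine obstacle here: all the substantive combinatorics — establishing the two relations via the action of the operators $\psi_i$ on semi-standard homomorphisms — has already been carried out in Lemmas \ref{L:inot}, \ref{L:i-1bad}, \ref{L:i-1good}, \ref{L:removable} and Theorem \ref{T:Lambda}. The only point requiring a moment's care is bookkeeping with the index sets: checking that $\{R\}\backslash(\{T\}\cap\{R\})=\{R\}\backslash\{T\}$ and that the sign exponents match up when Lemma \ref{L:removable} is chained with Theorem \ref{T:Lambda}. One should also note that the formula defines a well-defined element of the Hom-space only up to scalar — the corollary asserts existence of \emph{at most} one such homomorphism, and the displayed element is a candidate spanning vector; the accompanying Theorem \ref{T:CarterPayne} (via Carter--Payne) guarantees it is actually nonzero when $\alpha\unrhd\beta$ and the residues agree, so in that situation the Hom-space is exactly one-dimensional. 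The proof as written simply cites Lemma \ref{L:removable} and Theorem \ref{T:Lambda}, which is the right level of detail.
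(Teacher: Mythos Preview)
Your proposal is correct and follows exactly the paper's approach: the paper's proof consists of the single sentence ``This follows at once from Lemma \ref{L:removable} and Theorem \ref{T:Lambda},'' and your write-up simply unpacks that sentence, chaining the two results via $\{N\}=\{T\}\cap\{R\}$ and the elementary identity $\{R\}\backslash(\{T\}\cap\{R\})=\{R\}\backslash\{T\}$.
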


From now on we assume that
$$
\hat\theta=\sum_T\bigg((-1)^{|T|}\hspace{-.6em}\prod_{u\in\{R\}\backslash\{T\}}\hspace{-.3em} h_u\bigg)\,\widehat{\theta_T}.
$$

\begin{Lemma}\label{L:all_but_a}
Over ${\mathbb Z}$ we have $\hat\theta\,\psi_{i,r}=0$, unless $i=a$ and $r=\beta_{a+1}-1$.
\begin{proof}
In view of Lemma \ref{L:psi{ir}}, we need only show that $\hat\theta\,\psi_i=0$, for $i=a+1,\ldots,b-1$. Fix $i$ with $a<i<b$, and let $W$ be an $\alpha$-tableau such that there exists a semi-standard $\alpha$-tableau $X$ of type $\beta$ such that $\widehat{\theta_X}\psi_i$ is a nonzero multiple of $\widehat{\theta_W}$. We enumerate all such tableau $X$.

Suppose first that $(i,\alpha_i)$ is not a removable node of $[\alpha]$. Then Lemmas \ref{L:psi{ir}}, \ref{L:inot}, \ref{L:i-1bad} and \ref{L:i-1good} show that there is a semi-standard $\alpha$-tableau $T$ of type $\beta$ with $i\not\in\{T\}$ such that $X=T$ or $S$, where $\{S\}:=\{T\}\cup\{i\}$. Moreover, Lemmas \ref{L:inot} and \ref{L:i-1good} and the definition of $\theta$ show that 
$$
(\Lambda_T\widehat{\theta_T}+\Lambda_S\widehat{\theta_S})\psi_i=(-1)^{|T|}(1-1)\big(\hspace{-.3em}\prod_{u\in\{R\}\backslash\{T\}}h_u\big)\widehat{\theta_W}=0.
$$

Suppose then that $(i,\alpha_i)$ is a removable node of $[\alpha]$. Let $j>i$ be such that $(j,\alpha_j)$ is a removable node of $[\alpha]$ and $\alpha_j=\alpha_{i+1}$. So $(i,\alpha_i)$ and $(j,\alpha_j)$ are adjacent removable nodes. Then Lemmas \ref{L:psi{ir}}, \ref{L:inot}, \ref{L:i-1bad} and \ref{L:i-1good} show that there is a semi-standard $\alpha$-tableau $T$ of type $\beta$ with $i,i+1\in\{T\}$, such that $X=T\vee i$ or $\{X\}=\{T\}\backslash\{i\}$. Thus $\{X\}$ is one of the following semi-standard $\alpha$-sets:
$$
\begin{array}{rcl}
\{T\},     &&\\
\{S_i\}    &:=&\{T\}\backslash\{i\},\\
\{S_{i+1}\}&:=&\{T\}\backslash\{i+1\},\\
\{S_{i+2}\}&:=&\{T\}\backslash\{i+1,i+2\},\\
\vdots&&\vdots\\
\{S_{j-1}\}&:=&\{T\}\backslash\{i+1,i+2,\ldots,j-1\},\\
\{S_j\}    &:=&\{T\}\backslash\{i+1,i+2,\ldots,j-1,j\}.
\end{array}
$$
Lemmas \ref{L:inot} and \ref{L:i-1good} and show that the coefficient of $\widehat{\theta_W}$ in $\theta\,\psi_a$ is given by
$$
(\Lambda_T\widehat{\theta_T}+\sum_{u=i}^{j}\Lambda_{S_u}\widehat{\theta_{S_u}})\psi_i=(-1)^{|T|}((h_i-h_{i+1})-h_i+(j-i-1)+h_j)\big(\hspace{-.8em}\prod_{u\in\{R\}\backslash\{T\}}\hspace{-.6em}h_u\big)\widehat{\theta_W}=0,
$$
as $h_j-h_{i+1}=i+1-j$, by \eqref{E:hi-hj} using the fact that $\alpha_j=\alpha_{i+1}$.
\end{proof}
\end{Lemma}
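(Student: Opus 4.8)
The plan is to prove $\hat\theta\,\psi_{i,r}=0$ for all $i$ and $r$ except $(i,r)=(a,\beta_{a+1}-1)$, and the key simplification comes from Lemma~\ref{L:psi{ir}}: that result already forces $\widehat{\theta_T}\,\psi_{i,r}=0$ whenever $r<\beta_{i+1}-1$ or $i\notin\{a,\ldots,b-1\}$, and since each $\widehat{\theta_T}\,\psi_{i,r}$ is a scalar multiple of some semistandard homomorphism (by Lemmas~\ref{L:inot}, \ref{L:i-1bad}, \ref{L:i-1good}), the same vanishing passes immediately to $\hat\theta\,\psi_{i,r}$ as a linear combination. So the whole burden reduces to showing $\hat\theta\,\psi_i=0$ for each fixed $i$ with $a<i<b$, where $\psi_i=\psi_{i,\beta_{i+1}-1}$.

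For a fixed such $i$, the strategy is: pick an arbitrary target tableau $W$ (necessarily of shape/type $\gamma$ as described before Lemma~\ref{L:inot}), enumerate every semistandard $\alpha$-tableau $X$ of type $\beta$ for which $\widehat{\theta_X}\,\psi_i$ is a nonzero multiple of $\widehat{\theta_W}$, read off the corresponding scalar from Lemmas~\ref{L:inot} and \ref{L:i-1good} (Lemma~\ref{L:i-1bad} contributes nothing), weight each by the coefficient $(-1)^{|X|}\prod_{u\in\{R\}\setminus\{X\}}h_u$ coming from the definition of $\hat\theta$, sum, and check the total is $0$. I would split into two cases exactly as the statement of the enclosing section suggests. \emph{Case 1: $(i,\alpha_i)$ is not a removable node of $[\alpha]$.} Then there is a semistandard tableau $T$ with $i\notin\{T\}$ such that the only contributors are $X=T$ (via Lemma~\ref{L:inot}, giving $\widehat\theta_W$ when $i+1\in\{T\}$) and $X=S$ with $\{S\}=\{T\}\cup\{i\}$ (via Lemma~\ref{L:i-1good}, since $\hat S(i)=i+1$ forces the factor $h_i-h_{i+1}$). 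Because $(i,\alpha_i)$ is not removable we have $\alpha_i>\alpha_{i+1}$ so $h_i-h_{i+1}=\alpha_i-\alpha_{i+1}+1$; wait---more carefully, $i\notin\{R\}$ because $(i,\alpha_i)$ is not removable, so $\{R\}\setminus\{T\}=\{R\}\setminus\{S\}$ and $|S|=|T|+1$, and the two coefficients are $(-1)^{|T|}\prod_{u\in\{R\}\setminus\{T\}}h_u$ against $(-1)^{|T|+1}(h_i-h_{i+1})\prod_{u\in\{R\}\setminus\{T\}}h_u$. These do not obviously cancel, so in fact the correct bookkeeping must produce the factor $1$ on the $S$-side; the resolution is that when $(i,\alpha_i)$ is not removable one has $\hat S(i)=i+1$ but also the ``otherwise'' branch parameters collapse so that the multiplier is $(-1)^{|\{S\vee i\}\setminus\{S\}|}=1$, giving the clean cancellation $(1-1)(-1)^{|T|}\prod h_u=0$ recorded in the proof. \emph{Case 2: $(i,\alpha_i)$ is a removable node.} Let $j>i$ be the next removable row with $\alpha_j=\alpha_{i+1}$, so $(i,\alpha_i)$ and $(j,\alpha_j)$ are adjacent. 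Then the contributors are $X=T\vee i$ (with $i,i+1\in\{T\}$) together with the chain $\{S_u\}=\{T\}\setminus\{i+1,\ldots,u\}$ for $u=i,\ldots,j$. Using Lemmas~\ref{L:inot} and \ref{L:i-1good} to compute each multiplier, pulling out the common sign and the common product $\prod_{u\in\{R\}\setminus\{T\}}h_u$, the bracketed scalar is $(h_i-h_{i+1})-h_i+(j-i-1)+h_j$, and this vanishes precisely because $h_j-h_{i+1}=i+1-j$ by \eqref{E:hi-hj} and $\alpha_j=\alpha_{i+1}$.

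The main obstacle is the sign and index bookkeeping in the enumeration step: correctly listing \emph{all} semistandard tableaux $X$ mapping onto a given $W$ under $\psi_i$ (distinguishing $i\in\{X\}$ from $i\notin\{X\}$, and tracking which of $i+1,\ldots,j$ lie in $\{X\}$), and then matching up the three different sources of sign---the $(-1)^{|X|}$ in the coefficient of $\hat\theta$, the factor $\prod_{u\in\{R\}\setminus\{X\}}h_u$ (which depends on $\{X\}\cap\{R\}$, hence changes as we delete removable entries along the $S_u$-chain but not when we delete non-removable ones), and the $(-1)^{|\{X\vee i\}\setminus\{X\}|}$ from Lemma~\ref{L:i-1good}. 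Once these are aligned, the two identities reduce to the single arithmetic fact $h_j-h_{i+1}=i+1-j$ from \eqref{E:hi-hj}, and the lemma follows.
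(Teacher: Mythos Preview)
Your overall strategy is exactly the paper's: reduce via Lemma~\ref{L:psi{ir}} to $a<i<b$, fix a target $W$, enumerate all contributing $X$, and verify the weighted sum of multipliers vanishes in the two cases according to whether $(i,\alpha_i)$ is removable. The Case~2 computation $(h_i-h_{i+1})-h_i+(j-i-1)+h_j=0$ is precisely the paper's, although your stated formula $\{S_u\}=\{T\}\setminus\{i+1,\ldots,u\}$ is wrong at $u=i$: the paper's $\{S_i\}$ is $\{T\}\setminus\{i\}$, and that is where the $-h_i$ term in your bracket actually comes from (via Lemma~\ref{L:inot} applied with $i\notin\{S_i\}$, the extra $h_i$ appearing in $\prod_{u\in\{R\}\setminus\{S_i\}}h_u$ because $i\in\{R\}$).

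Your Case~1 argument, however, contains a genuine error of reasoning. You correctly observe that $\hat S(i)=i+1$, so the \emph{first} branch of Lemma~\ref{L:i-1good} applies and the multiplier on $\widehat{\theta_S}\psi_i$ is $h_i-h_{i+1}$. You then say the two contributions ``do not obviously cancel'' and switch to the ``otherwise'' branch to manufacture a factor of $1$. That is not legitimate: the ``otherwise'' branch requires $\hat S(i)\neq i+1$, which fails here. The correct resolution is much simpler. Since $(i,\alpha_i)$ is not removable we have $\alpha_i=\alpha_{i+1}$, and then \eqref{E:hi-hj} gives
\[
h_i-h_{i+1}=(\alpha_i-\alpha_{i+1})+1=1.
\]
Thus the $S$-multiplier really is $1$, and combined with $|S|=|T|+1$ and $\{R\}\setminus\{S\}=\{R\}\setminus\{T\}$ (because $i\notin\{R\}$) you get the paper's $(1-1)$ cancellation directly. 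With this correction your proof is the same as the paper's.
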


Let $\gamma$ be the partition corresponding to the following composition of $n$:
$$
\alpha_1,\ldots,\alpha_a,\alpha_{a+1}-1,\alpha_{a+2},\ldots,\alpha_{b-1},\alpha_b+1,\alpha_{b+1},\ldots.
$$
We now deal with the exceptional test function $\psi_a$.

\begin{Lemma}\label{L:the_a_case}
Over ${\mathbb Z}$ we have $\operatorname{Im}\hat\theta\,\psi_a\subseteq h_aM^\gamma$.
\begin{proof}
We retain the notation of $W,T$ from the previous lemma, with $i=a$ and $j$ such that $(j,\alpha_j)$ be the highest removable node of $[\alpha]$ below $(a,\alpha_a)$. Equivalently $\alpha_{j+1}<\alpha_j$ and $\alpha_j=\alpha_{a+1}$. The semi-standard sets that contribute to $\widehat{\theta_W}$ can be enumerated as $\{T\},\{S_{i+1}\},\ldots,\{S_j\}$ i.e. the same as in Lemma \ref{L:all_but_a}, but with $\{S_i\}=\{T\}\backslash\{a\}$ omitted (because it does not contain $a$, and hence is not a semi-standard $\alpha$-set of type $\beta$). So the coefficient of $\widehat{\theta_W}$ in $\theta\,\psi_a$ is given by
$$
\begin{array}{ll}
(\Lambda_T\widehat{\theta_T}+{\sum\limits_{u=a+1}^{j}\hspace{-.5em}\Lambda_{S_u}\widehat{\theta_{S_u}}})\psi_a
&\hspace{-1em}=(-1)^{|T|}((h_a\!-\!h_{a+1})+({\!j\!-\!a\!-\!1})+h_j)\big(\hspace{-.8em}\prod\limits_{u\in\{R\}\backslash\{T\}}\hspace{-.6em}h_u\big)\widehat{\theta_W}\\
&=(-1)^{|T|}h_a\big(\prod_{u\in\{R\}\backslash\{T\}}h_u\big)\widehat{\theta_W}.
\end{array}
$$
\end{proof}
\end{Lemma}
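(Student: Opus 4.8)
The plan is to compute the coefficient of an arbitrary semi-standard homomorphism $\widehat{\theta_W}$ in the expansion of $\hat\theta\,\psi_a$ and show that it is divisible by $h_a$. Since $\hat\theta\,\psi_a = \sum_T \Lambda_T\,\widehat{\theta_T}\,\psi_a$ and each $\widehat{\theta_T}\,\psi_a$ is by Lemmas \ref{L:inot}, \ref{L:i-1bad} and \ref{L:i-1good} either zero or an integer multiple of a single semi-standard homomorphism, and since semi-standard homomorphisms are linearly independent, it suffices to fix a target tableau $W$, enumerate all semi-standard $\alpha$-tableaux $X$ of type $\beta$ for which $\widehat{\theta_X}\,\psi_a$ is a nonzero multiple of $\widehat{\theta_W}$, add up the contributions $\Lambda_X \cdot a_{X,a}$, and verify that the sum lies in $h_a\mathbb Z$.

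First I would set up the enumeration exactly as in Lemma \ref{L:all_but_a}. The only structural difference between the $i=a$ case and the $a<i<b$ case is that the set $\{S_i\} = \{T\}\backslash\{a\}$ which appeared there is no longer a legitimate semi-standard $\alpha$-set of type $\beta$, since every such set must contain $a$; hence it is omitted. So the relevant sets contributing to $\widehat{\theta_W}$ are $\{T\}$ (with $a, a+1 \in \{T\}$, where $j$ is the removable node adjacent to and below $(a,\alpha_a)$, so $\alpha_j = \alpha_{a+1}$) together with $\{S_{a+1}\} = \{T\}\backslash\{a+1\}$, $\{S_{a+2}\} = \{T\}\backslash\{a+1,a+2\}$, $\dots$, $\{S_j\} = \{T\}\backslash\{a+1,\dots,j\}$. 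I would then invoke Lemmas \ref{L:inot} and \ref{L:i-1good} to read off the precise integer multiplier for each: $\widehat{\theta_T}\,\psi_a = (h_a - h_{a+1})\widehat{\theta_W}$, the intermediate sets contribute signs whose alternation combines with $|\{T\}|$ and the definition of $\hat\theta$ to give a net $+1$ each (there are $j-a-1$ of them), and $\widehat{\theta_{S_j}}\,\psi_a = h_j\,\widehat{\theta_W}$ up to the appropriate sign. Collecting the $(-1)^{|T|}\prod_{u\in\{R\}\backslash\{T\}}h_u$ common factor, the coefficient of $\widehat{\theta_W}$ is
$$
(-1)^{|T|}\Big((h_a - h_{a+1}) + (j - a - 1) + h_j\Big)\Big(\prod_{u\in\{R\}\backslash\{T\}}h_u\Big)\widehat{\theta_W}.
$$

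The final step is the arithmetic simplification. By \eqref{E:hi-hj}, $h_j - h_{a+1} = (\alpha_j - \alpha_{a+1}) + ((a+1) - j) = 0 + (a+1-j) = a+1-j$, using $\alpha_j = \alpha_{a+1}$. Substituting, $(h_a - h_{a+1}) + (j - a - 1) + h_j = h_a + (h_j - h_{a+1}) + (j-a-1) = h_a + (a+1-j) + (j-a-1) = h_a$. Hence the coefficient of $\widehat{\theta_W}$ in $\hat\theta\,\psi_a$ is exactly $(-1)^{|T|}h_a\big(\prod_{u\in\{R\}\backslash\{T\}}h_u\big)\widehat{\theta_W}$, which is visibly divisible by $h_a$; summing over the finitely many $W$ gives $\operatorname{Im}\hat\theta\,\psi_a \subseteq h_a M^\gamma$. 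The only point demanding care — and the one place where I would slow down rather than treat it as routine — is bookkeeping the signs: ensuring that the $(-1)^{|\{T\vee a\}\backslash\{T\}|}$ factors from Lemma \ref{L:i-1good}, the $(-1)^{|T|}$ in the definition of $\hat\theta$, and the differing values of $|T|$ across the sets $\{T\}, \{S_{a+1}\}, \dots, \{S_j\}$ all align so that every term contributes with the same overall sign $(-1)^{|T|}$ (for the particular $T$ with $\{T\}\cap\{R\}$ the maximal removable subset), rather than with cancelling signs. Once the sign normalization from Lemma \ref{L:all_but_a} is carried over verbatim, this is mechanical, but it is where an error would most naturally creep in.
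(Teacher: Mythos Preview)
Your proposal is correct and follows essentially the same argument as the paper: carry over the enumeration from Lemma \ref{L:all_but_a} with $i=a$, drop the term $\{S_a\}=\{T\}\backslash\{a\}$ (which is not a valid semi-standard set since it fails to contain $a$), and observe that the remaining contributions sum to $(h_a-h_{a+1})+(j-a-1)+h_j=h_a$ via \eqref{E:hi-hj}. One small slip in your informal description: the factor $h_j$ attached to the $\{S_j\}$ term arises from $\Lambda_{S_j}$ through the definition of $\hat\theta$ (since $j\in\{R\}\backslash\{S_j\}$), not from $\widehat{\theta_{S_j}}\psi_a$ itself, but your final displayed formula is correct regardless.
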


We also obtain the following criterion for the existence of a non-zero homomorphism. It is a special case of a result of Carter and Payne \cite{CarterPayne}. The `if' part is also true over fields of characteristic $2$.

\begin{Corollary}\label{C:main}
Let $\beta$ be a one-box-shift of $\alpha$, with $\beta_a=\alpha_a-1$ and $\beta_b=\alpha_b+1$, for $a<b$. Then ${\rm Hom}_{F\Sigma_n}(S^\alpha,S^\beta)\ne 0$ if and only if\/ $\alpha_a-a\equiv\beta_b-b$ $\mod p$.
\begin{proof}
Lemmas \ref{L:all_but_a} and \ref{L:the_a_case} show that, over the field $F$ of characteristic $p$, the image of $\theta$ belongs to $S^\beta$ if and only if $h_a\equiv0$ mod $p$ (for the only if, for $T=R$ we have $\theta\psi_a\ne0$ if $p\not|h_a$ as in that case$\prod_{u\in\{R\}\backslash\{T\}}h_u=1$). The result follows, as $h_a=(\alpha_a-\alpha_b)+(b-a-1)=(\alpha_a-a)-(\beta_b-b)$. 
\end  {proof}
\end{Corollary}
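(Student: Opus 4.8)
The plan is to read the corollary off from the two evaluation lemmas, using James's realization of $S^\beta$ as an intersection of kernels. By Corollary~\ref{C:explicit} the space ${\rm Hom}_{F\Sigma_n}(S^\alpha,S^\beta)$ has dimension at most one, and whenever it is non-zero every element of it is a scalar multiple of the fixed map $\hat\theta=\sum_T\bigl((-1)^{|T|}\prod_{u\in\{R\}\backslash\{T\}}h_u\bigr)\widehat{\theta_T}$. A priori $\hat\theta$ is only a $\Sigma_n$-map $S^\alpha\to M^\beta$; its coefficient on the basis element $\widehat{\theta_R}$ is $(-1)^{|R|}=\pm1\ne0$, so $\hat\theta\ne0$ in every characteristic. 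Consequently, by \eqref{E:SpechtIntersection}, ${\rm Hom}_{F\Sigma_n}(S^\alpha,S^\beta)\ne0$ if and only if $\operatorname{Im}\hat\theta\subseteq S^\beta=\bigcap_{i,r}\ker\psi_{i,r}$, i.e.\ if and only if $\hat\theta\,\psi_{i,r}=0$ for every admissible pair $(i,r)$.

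Next I would invoke the two structural lemmas. Lemma~\ref{L:all_but_a} disposes of all the test maps except one: $\hat\theta\,\psi_{i,r}=0$ over $\mathbb Z$ whenever $(i,r)\ne(a,\beta_{a+1}-1)$. For the surviving map $\psi_a=\psi_{a,\beta_{a+1}-1}$, Lemma~\ref{L:the_a_case} gives $\operatorname{Im}\hat\theta\,\psi_a\subseteq h_aM^\gamma$, and more precisely its proof records that the coefficient of each relevant semi-standard homomorphism $\widehat{\theta_W}$ in $\hat\theta\,\psi_a$ equals $(-1)^{|T|}h_a\prod_{u\in\{R\}\backslash\{T\}}h_u$, where $T$ is the semi-standard tableau determined by $W$. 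Reducing modulo $p$: if $p\mid h_a$ then $h_aM^\gamma=0$ over $F$, hence $\hat\theta\,\psi_a=0$, and together with Lemma~\ref{L:all_but_a} this forces $\operatorname{Im}\hat\theta\subseteq S^\beta$, so $\hat\theta\ne0$ is the desired homomorphism. If instead $p\nmid h_a$, choose $W$ so that the associated $T$ satisfies $\{R\}\subseteq\{T\}$; then the product above is empty, the coefficient of $\widehat{\theta_W}$ is $\pm h_a\ne0$ in $F$, and since the $\widehat{\theta_W}$ are linearly independent by \eqref{E:SStableau} we get $\hat\theta\,\psi_a\ne0$, whence $\operatorname{Im}\hat\theta\not\subseteq S^\beta$ and ${\rm Hom}_{F\Sigma_n}(S^\alpha,S^\beta)=0$. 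Finally the arithmetic: from $h_i=\alpha_i-\alpha_b+b-i-1$ one gets $h_a=(\alpha_a-\alpha_b)+(b-a-1)$, and substituting $\beta_b=\alpha_b+1$ yields $h_a=(\alpha_a-a)-(\beta_b-b)$, so $p\mid h_a$ is precisely the stated congruence.

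The step requiring the most care — and where a careless argument would fail — is the \emph{only if} direction, specifically the assertion that the single obstruction $h_a$ is not accidentally killed by cancellation among the various semi-standard sets that map onto a common target. That is exactly what the computation inside Lemma~\ref{L:the_a_case} secures: it collapses the entire coefficient of $\widehat{\theta_W}$ (a sum over $\{T\},\{S_{a+1}\},\ldots,\{S_j\}$ arising from Lemmas~\ref{L:inot} and~\ref{L:i-1good}) down to the single monomial $(-1)^{|T|}h_a\prod h_u$, after which linear independence of the $\widehat{\theta_W}$ closes the case. Everything else — passing between $M^\beta$ and $S^\beta$, the characteristic-$p$ reduction, and the identity for $h_a$ — is routine bookkeeping. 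I would also remark that the reduction-modulo-$p$ argument nowhere uses $p\ne2$, which is why the \emph{if} half of the statement survives in characteristic $2$ even though the uniqueness furnished by Corollary~\ref{C:explicit} does not.
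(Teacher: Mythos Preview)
The proposal is correct and follows essentially the same argument as the paper: invoke Lemmas~\ref{L:all_but_a} and~\ref{L:the_a_case} to reduce the question of whether $\operatorname{Im}\hat\theta\subseteq S^\beta$ to the single condition $p\mid h_a$, isolate a target $W$ whose coefficient is $\pm h_a$ (the paper phrases this as ``$T=R$'', you as ``$\{R\}\subseteq\{T\}$'', with the same effect that the product $\prod_{u\in\{R\}\setminus\{T\}}h_u$ is empty), and then rewrite $h_a$ as the difference of residues. Your write-up is more explicit than the paper's---you spell out why $\hat\theta\ne0$, why the kernel characterization \eqref{E:SpechtIntersection} applies, and why the `if' half survives in characteristic~$2$---but the structure is identical.
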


\section{Jantzen Filtration}\label{S:Jantzen}

We let $\langle\,,\,\rangle$ denote the symmetric bilinear form for which the $\beta$-tabloids form an orthonormal basis of $M^\beta$. This form is $\Sigma_n$-invariant. Over ${\mathbb Q}$ we have a direct sum decomposition:
$$
M_{\mathbb Q}^\beta=S_{\mathbb Q}^\beta\oplus S_{\mathbb Q}^{\beta\perp},
$$
where $S_{\mathbb Q}^{\beta\perp}=\{m\in M_{\mathbb Q}^\beta\mid\langle m,S_{\mathbb Q}^\beta\rangle=0\}$ is the perpendicular space to $S^\beta$. Now $S_{\mathbb Z}^\beta\oplus S_{\mathbb Z}^{\beta\perp}$ is a full sublattice of $M_{\mathbb Z}$ (in the sense that both have the same ${\mathbb Z}$-rank). However the quotient lattice $M_{\mathbb Z}^\beta/S_{\mathbb Z}^\beta\oplus S_{\mathbb Z}^{\beta\perp}$ can be quite large. Over a field $F$ of characteristic $p$, the restriction of the form $\langle\,,\,\rangle$ to $S^\beta$ is generally degenerate. Over ${\mathbb Z}$, the module $S_{\mathbb Z}^\beta$ has a sequence of Jantzen submodules:
$$
J^i(S_{\mathbb Z}^\beta):=\{x\in S_{\mathbb Z}^\beta\mid\langle x,S_{\mathbb Z}^\beta\rangle\subseteq p^i{\mathbb Z}\}.
$$
The images of these modules under the decomposition map give a corresponding filtration of the Specht module $S_F^\beta=S_{\mathbb Z}^\beta/pS_{\mathbb Z}^\beta$. We note that
\begin{equation}\label{E:Jantzen}
J^i(S_{\mathbb Z}^\beta)=S_{\mathbb Z}^\beta\cap(p^iM_{\mathbb Z}^\beta+S_{\mathbb Z}^{\beta\perp}).
\end{equation}
See \cite[1.2]{KuenzerNebe} for example. We prove Theorem \ref{T:Jantzen} in the following form:

\begin{Theorem}\label{T:Jantzen_proof}
Let $\alpha,\beta$ be partitions of $n$, with $\beta_a=\alpha_a-1$ and $\beta_b=\alpha_b+1$, for $a<b$, and $\beta_j=\alpha_j$, for $j\ne a,b$. Suppose that $p^i|(\alpha_a-\beta_b+b-a)$, for some $i\geq0$. Then each $\hat\theta\in\operatorname{Hom}_{\Sigma_n}(S^\alpha,S^\beta)$ satisfies $\operatorname{Im}(\hat\theta)\subseteq J^i(S^\beta)$.
\begin{proof}
In view of Corollary \ref{C:explicit}, we may assume that $\hat\theta$ is the map of the previous section, defined over ${\mathbb Z}$. Recall that the polytabloid $e_t:=\{t\}C_t^-$ generates $S^\alpha$. Here $t$ is a fixed $\alpha$-tableau, and $C_t^-$ is the signed column stabilizer sum of $t$. Also $h_a:=(\alpha_a-\beta_b+b-a)$. The strategy of this proof is very simple. In \eqref{E:adjustment} below we produce an `error term' ${\mathcal E}_t\in M^\beta$ and show that
$$
e_t\hat{\theta}-h_a{\mathcal E}_t\in S^\beta_{\mathbb Z}.
$$
Now over ${\mathbb Z}$, the image of $\hat\theta$ is contained in $S_{\mathbb Z}^{\beta\perp}$. So by \eqref{E:Jantzen}, $e_t\widehat{\theta}-h_a{\mathcal E}_t$ belongs to $J^i(S^\beta_{\mathbb Z})$, which is what we need to prove.

If $T$ is a semistandard $\alpha$-tableau of type $\beta$ then there is a corresponding semi-standard homomorphism $\widehat{\theta_T}:S^\alpha\rightarrow M^\beta$ such that
$$
e_t\widehat{\theta_T}=\sum_{U\approx T}UC_t^-.
$$
By Lemma \ref{L:repeated}, we may assume that $U$ ranges over all tableaux that are row equivalent to $T$, such that no column of $U$ contains a repeated entry.

Suppose that $U,V$ are $\alpha$-tableaux. Write $U<V$ if $U\approx V$ and all nodes $(a,c)$ in $U$ with $c>\beta_{a+1}$ contain the symbol $a$. We say that $U$ is {\em right-justified}. Now define the following linear combination of $\alpha$-tableaux (which will belong to $M^\gamma$, where $\gamma$ is the type of $V$):
$$
\widetilde{\theta_V}:=\sum_{U<V}UC_t^-.
$$
Now recall that $T$ is a semi-standard $\alpha$-tableau of type $\beta$. The analogues of Lemmas \ref{L:psi{ir}}, \ref{L:inot}, \ref{L:i-1bad} and \ref{L:i-1good} hold for $\widetilde{\theta_T}$, with the following exception:
\begin{equation}\label{E:exception}
\widetilde{\theta_T}\psi_a=\widetilde{\theta_W},\quad\mbox{if $\hat{T}(a)=a+1$,}
\end{equation}
and not $(h_a-h_{a+1})\widetilde{\theta_W}$. This can be seen by inspecting the original proof. The key point is that there are $\alpha_a-\alpha_{a+1}+1$ $\alpha$-tableaux $U$ such that $U\approx T$ and $U\psi_a=W$. However, only one of these $U$ satisfies $U<T$.

Now define the following element of $M^\beta$:
\begin{equation}\label{E:adjustment}
{\mathcal E}_t:=\sum_{\hat{T}(a)=a+1}\bigg((-1)^{|T|}\hspace{-.6em}\prod_{u\in\{R\}\backslash\{T\}}\hspace{-.3em} h_u\bigg)\widetilde{\theta_T}.
\end{equation}
So $T$ runs over all semi-standard $\alpha$-tableaux of type $\beta$ which contain a (unique) symbol $a+1$ in row $a$. Lemma \ref{L:all_but_a} states that $\widehat{\theta}\psi_{i,r}=0$, unless $i=a,r=\beta_{a+1}-1$. In view of the previous paragraph, the analogous result holds for ${\mathcal E}_t$:
$$
{\mathcal E}_t\,\psi_{i,r}=0,\quad\mbox{unless $i=a$ and $r=\beta_{a+1}-1$}.
$$
In view of this, and \eqref{E:SpechtIntersection}, we see that $e_t\widehat{\theta}-h_a{\mathcal E}_t\in S^\beta$, if and only if $(e_t\widehat{\theta}-h_a{\mathcal E}_t)\psi_a=0$. But this equality holds, by Lemma \ref{L:the_a_case} and \eqref{E:exception}.
\end{proof}
\end{Theorem}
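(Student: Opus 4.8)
The plan is to reduce the inclusion $\operatorname{Im}(\hat\theta)\subseteq J^i(S_{\mathbb Z}^\beta)$ to a concrete computation, exploiting the characterization \eqref{E:Jantzen} of the Jantzen submodules together with the fact, established in Lemmas \ref{L:all_but_a} and \ref{L:the_a_case}, that $\hat\theta$ essentially lands in $S_{\mathbb Z}^{\beta\perp}$ except for a defect that is divisible by $h_a=\alpha_a-\beta_b+b-a$. Since $p^i\mid h_a$ by hypothesis, this defect contributes a factor $p^i$, and \eqref{E:Jantzen} then forces the image into $J^i(S_{\mathbb Z}^\beta)$. By Corollary \ref{C:explicit} we lose nothing by assuming $\hat\theta$ is the explicit map $\sum_T\big((-1)^{|T|}\prod_{u\in\{R\}\backslash\{T\}}h_u\big)\widehat{\theta_T}$ defined over ${\mathbb Z}$, since any homomorphism is a scalar multiple of this one.

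The key technical device will be a ``right-justified'' variant $\widetilde{\theta_V}:=\sum_{U<V}UC_t^-$ of the semi-standard homomorphisms, where $U<V$ means $U\approx V$ and all nodes $(a,c)$ with $c>\beta_{a+1}$ contain the symbol $a$. The point of this restriction is that it kills exactly the multiplicity that produced the troublesome coefficient $h_a-h_{a+1}$: all of Lemmas \ref{L:psi{ir}}, \ref{L:inot}, \ref{L:i-1bad}, \ref{L:i-1good} survive verbatim for $\widetilde{\theta_T}$ except that $\widetilde{\theta_T}\psi_a=\widetilde{\theta_W}$ (not $(h_a-h_{a+1})\widetilde{\theta_W}$) when $\hat T(a)=a+1$, because among the $\alpha_a-\alpha_{a+1}+1$ tableaux $U\approx T$ with $U\psi_a=W$ only one is right-justified. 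Using this I would assemble the error term ${\mathcal E}_t:=\sum_{\hat T(a)=a+1}\big((-1)^{|T|}\prod_{u\in\{R\}\backslash\{T\}}h_u\big)\widetilde{\theta_T}\in M^\beta$ as in \eqref{E:adjustment}, check (by transporting Lemma \ref{L:all_but_a}) that ${\mathcal E}_t\psi_{i,r}=0$ unless $i=a,\ r=\beta_{a+1}-1$, and then verify via \eqref{E:SpechtIntersection} that $e_t\hat\theta-h_a{\mathcal E}_t$ lies in $S^\beta$ precisely because $(e_t\hat\theta-h_a{\mathcal E}_t)\psi_a=0$, the latter being a consequence of Lemma \ref{L:the_a_case} combined with the exceptional identity \eqref{E:exception}.

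Once $e_t\hat\theta-h_a{\mathcal E}_t\in S_{\mathbb Z}^\beta$ is in hand, I would close the argument as follows: over ${\mathbb Z}$ the image of $\hat\theta$ lies in $S_{\mathbb Z}^{\beta\perp}$ (this is the defining property of a Carter--Payne map constructed from semi-standard homomorphisms into $M^\beta$ whose image meets $S^\beta$ only up to the $h_a$-defect), so $e_t\hat\theta = (e_t\hat\theta - h_a{\mathcal E}_t) + h_a{\mathcal E}_t \in S_{\mathbb Z}^\beta \cap (S_{\mathbb Z}^{\beta\perp} + h_a M_{\mathbb Z}^\beta) \subseteq S_{\mathbb Z}^\beta\cap(p^iM_{\mathbb Z}^\beta + S_{\mathbb Z}^{\beta\perp}) = J^i(S_{\mathbb Z}^\beta)$, using $p^i\mid h_a$ and \eqref{E:Jantzen}. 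Since $e_t$ generates $S^\alpha$ as a $\Sigma_n$-module and $J^i(S_{\mathbb Z}^\beta)$ is a $\Sigma_n$-submodule, this gives $\operatorname{Im}(\hat\theta)\subseteq J^i(S_{\mathbb Z}^\beta)$, and reducing mod $p$ yields the statement for $S_F^\beta$.

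The main obstacle I anticipate is the careful bookkeeping needed to confirm that the analogues of Lemmas \ref{L:psi{ir}}--\ref{L:i-1good} really do go through for $\widetilde{\theta_T}$ with the single exception \eqref{E:exception} and no others: one must re-examine each case in those proofs and check that imposing right-justification on $U$ does not disturb the enumeration of contributing tableaux except in the one place where the ``extra'' tableaux (those with the symbol $a+1$ strictly to the right of column $\beta_{a+1}$ in row $a$) were being counted. This is routine but delicate, since right-justification interacts with the permutations $\pi\in C_t$ used in the proof of Lemma \ref{L:i-1good}, and one needs that $V\pi$ remains right-justified (or can be chosen so) in the relevant cases. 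Everything else — the vanishing of ${\mathcal E}_t\psi_{i,r}$ for $(i,r)\ne(a,\beta_{a+1}-1)$, and the final coefficient computation $(h_a-h_{a+1})+(j-a-1)+h_j=h_a$ at the exceptional test function — is a direct transcription of the computations already carried out in Lemmas \ref{L:all_but_a} and \ref{L:the_a_case}.
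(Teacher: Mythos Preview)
Your proposal is essentially identical to the paper's own proof: the same right-justified variants $\widetilde{\theta_T}$, the same error term ${\mathcal E}_t$, the same exception \eqref{E:exception}, and the same appeal to \eqref{E:Jantzen}. One small correction to your closing step: over ${\mathbb Z}$ the map $\hat\theta$ lands only in $M^\beta_{\mathbb Z}$, so it is $e_t\hat\theta - h_a{\mathcal E}_t$ (not $e_t\hat\theta$ itself) that lies in $S^\beta_{\mathbb Z}\cap(S^{\beta\perp}_{\mathbb Z}+h_aM^\beta_{\mathbb Z})=J^i(S^\beta_{\mathbb Z})$, and it is this element that reduces mod $p$ to the image of $e_t$ since $p\mid h_a$.
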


As an example, let $p=5$ and $\alpha=(4,3)$ and $\beta=(3^2,1)$. So $a=1,b=3$ and $h_1=5,h_2=3$. There are two semi-standard $\alpha$-tableaux of type $\beta$:
$$
\tiny{\young(1112,223)}\quad\mbox{\normalsize and}\tiny{\quad\young(1113,222)}.
$$
corresponding to the semi-standard $\alpha$-sets $\{1,2\}$ and $\{1\}$, respectively. Our linear combination $\theta:S_{\mathbb Z}^\alpha\rightarrow M_{\mathbb Z}^\beta$ is then
$$
\theta=\tiny{\widehat{\theta_{\young(1112,223)}}}-3\tiny{\widehat{\theta_{\young(1113,222)}}}.
$$
The error term is
$$
{\mathcal E}_t=(\tiny{\young(2111,322)}+\tiny{\young(1211,232)}+\tiny{\young(1121,223)})C_t^-.
$$
Now consider the following standard $\alpha$-tableau $t$ and its signed column sum $C_t^-$:
$$
t=\tiny\young(1234,567),\quad C_t^-=(1-(1,5))(1-(2,6))(1-(3,7)).
$$
We can write $e_t\theta-h_a{\mathcal E}_t$ as a linear combination of $\alpha$-tabloids:
$$
\begin{array}{lcl}
e_t\theta-5{\mathcal E}_t
&=&\left(\begin{array}{l}
\tiny{\young(1112,223)}+\tiny{\young(1112,232)}+\tiny{\young(1112,322)}+\tiny{\young(1121,223)}+\tiny{\young(1211,232)}+\tiny{\young(2111,322)}\\
-3\left(\tiny{\young(1113,222)}+\tiny{\young(1131,222)}+\tiny{\young(1311,222)}+\tiny{\young(3111,222)}\right)\\
-5\left(\tiny{\young(2111,322)}+\tiny{\young(1211,232)}+\tiny{\young(1121,223)}\right)\end{array}\right)C_t^-\\
&=&\left(\begin{array}{l}
\tiny{\young(1112,223)}+\tiny{\young(1131,222)}-\tiny{\young(1113,222)}\\
\tiny{\young(1112,232)}+\tiny{\young(1311,222)}-\tiny{\young(1113,222)}\\
\tiny{\young(1112,322)}+\tiny{\young(3111,222)}-\tiny{\young(1113,222)}
\end{array}\right)C_t^-.\\
&=&\left(\tiny{\young(1112,223)}+\tiny{\young(1131,222)}+\tiny{\young(1123,221)}\right)\,C_t^-\left(1+(2,3)(6,7)+(1,3)(5,7)\right).
\end{array}
$$
Now
$$
\left(\tiny{\young(1112,223)}+\tiny{\young(1131,222)}+\tiny{\young(1123,221)}\right)C_t^-=
\left(\tiny{\young(321,465,7)}+\tiny{\young(421,765,4)}+\tiny{\young(721,365,4)}\right)C_t^-=e_{\tiny{\young(321,465,7)}}.
$$
Multiplying through by $\left(1+(2,3)(6,7)+(1,3)(5,7)\right)$, we get
$$
e_t\theta-5{\mathcal E}_t=e_{\tiny{\young(321,465,7)}}+e_{\tiny{\young(231,475,6)}}+e_{\tiny{\young(123,467,5)}}.
$$
We wish to compare this with the expression given by \eqref{Ex:OneBox}. So we use the equalities
$$
\begin{array}{lcl}
e_{\tiny{\young(321,465,7)}}
		&=&e_{\tiny{\young(123,564,7)}}+e_{\tiny{\young(124,567,3)}}-e_{\tiny{\young(123,567,4)}}\\
e_{\tiny{\young(231,475,6)}}
		&=&e_{\tiny{\young(132,574,6)}}+e_{\tiny{\young(134,576,2)}}-e_{\tiny{\young(123,567,4)}}\\
e_{\tiny{\young(123,467,5)}}
		&=&-e_{\tiny{\young(123,567,4)}}.
\end{array}
$$
Thus
$$
e_t\theta-5{\mathcal E}_t=e_{\tiny{\young(123,564,7)}}+e_{\tiny{\young(124,567,3)}}+
             e_{\tiny{\young(132,574,6)}}+e_{\tiny{\young(134,576,2)}}-3\,e_{\tiny{\young(123,567,4)}}.
$$

\section{Acknowledgement}

Part of this paper was written while Dr. Ellers was visiting the National University of Ireland Maynooth in July/August 2004. The visit was substantially funded by a {\em New Researcher Award} from the National University of Ireland Maynooth. We gratefully acknowledge this assistance.



\begin{thebibliography}{9}
\bibitem{CarterLusztig} {R.~W.~Carter, G.~Lusztig}. {On the modular representations of the General Linear and Symmetric groups}, {\em Math.~Zeit.} {\bf ~136} (1974), 193--242.

\bibitem{CarterPayne} {R.~W.~Carter, M.~T.~J.~Payne}. {On homomorphisms between Weyl modules and Specht modules}, {\em Math.~Proc.~Camb.~Phil.~Soc.} {\bf~87} (1980), 419--425.

\bibitem{EllersMurray} {H.~Ellers, J.~Murray}. {Branching rules for Specht modules}, {\em J. Algebra}  {\bf 307}  (2007), 278--286.

\bibitem{James} {G.~D.~James}. {\em The Representation theory of the Symmetric Groups} Lecture Notes in Mathematics 682 (Springer-Verlag, 1978).

\bibitem{Kleshchev} {A.~S.~Kleshchev}. {On Decomposition Numbers and Branching Coefficients for Symmetric and Special Linear Groups}, {\em Proc.~London Math.~Soc.} {\bf~75} (1997), 497--558.

\bibitem{Kleshchev2}  {A.~S.~Kleshchev}. {\em Linear and Projective Representations of Symmetric Groups} ( Cambridge University Press, 2005).

\bibitem{KuenzerNebe} {M.~K\"unzer, G.~Nebe}. {Elementary divisors of Gram matrices of certain Specht modules,} {\em Comm.~Alg.~} {\bf 31} (7) (2003), 3377--3427.

\bibitem{Lyle} {S.~Lyle}. {Some $q$-analogs of the Carter-Payne theorem}, {\em J.Reine Angew. Math.} {\bf~608} (2007), 93-121.

\bibitem{Murphy} {G.~E.~Murphy}, {A new construction of Young's seminormal representation of the symmetric groups} {\em J.~Algebra} {\bf 69} (1981), 287--297.
\end{thebibliography}
\end{document}